\def\1{\ensuremath{\mathbbm{1}}}%
\def\2{\ensuremath{\mathbbm{2}}}%
\newcommand{\E}{\ensuremath{\bb{E}}}
\newcommand{\cC}{{\cl{C}}}
\newcommand{\cG}{{\cl{G}}}
\newcommand{\cV}{{\cl{V}}}
\newcommand{\cW}{{\cl{W}}}
\newcommand{\mysdot}{\psset{unit=1mm}\pscircle*(0,0){0.3}}
\newcommand{\UP}{\ensuremath{U\hs{-0.15}P}}
\newcommand{\UX}{\ensuremath{U\hs{-0.1}X}}
\newcommand{\VH}{\ensuremath{V\hs{-0.1}H}}
\newcommand{\eps}{\endpspicture}
\newcommand{\ps}{\pspicture}
\newcommand{\dl}{distributive law}
\newcommand{\dls}{distributive laws}
\newcommand{\Dls}{Distributive laws}
\newcommand{\eh}{Eckmann--Hilton}
\newcommand{\bmc}{braided monoidal category}
\newcommand{\bmcs}{braided monoidal categories}
\newcommand{\dd}{doubly-degenerate}
\newcommand{\catequiv}{\simeq} 
\newcommand{\scr}{\scriptsize}
\renewcommand{\:}{\colon}
\newcommand{\bs}{\bigskip}
\newcommand{\bicats}{\ensuremath{\cat{Bicat}_s}}
\newcommand{\VoneGph}{\cat{$\cV_1$-Gph}}
\newcommand{\fcvpp}{\ensuremath{\fc_{(\cV,P)}}}
\newcommand{\fcvp}{\ensuremath{\fc_{P}}}
\newcommand{\fcvonepone}{\ensuremath{\fc_{P_1}}}
\newcommand{\fcvtwoptwo}{\ensuremath{\fc_{P_2}}}
\newcommand{\vgph}{\cat{$\cV$-Gph}}
\newcommand{\vonegph}{\cat{$\cV_1$-Gph}}
\newcommand{\vtwogph}{\cat{$\cV_2$-Gph}}
\newcommand{\vpcat}{\ensuremath{\cV\cat{-Cat}_P}}
\newcommand{\voneponecat}{\ensuremath{\cV_1\cat{-Cat}_{P_1}}}
\newcommand{\voneponecatgph}{\cat{\voneponecat-Gph}}
\newcommand{\vtwoptwocat}{\ensuremath{\cV_2\cat{-Cat}_{P_2}}}
\newcommand{\vnpncat}{\ensuremath{\cV_n\cat{-Cat}_{P_n}}}
\newcommand{\vonegphgph}{\ensuremath{\cV_1\cat{-Gph-Gph}}}
\newcommand{\vone}{\ensuremath{\cV_1}}
\newcommand{\vtwo}{\ensuremath{\cV_2}}
\newcommand{\vthree}{\ensuremath{\cV_3}}
\newcommand{\tonealg}{\cat{$T_1$-Alg}}
\newcommand{\ddbicatscat}{doubly-degenerate \bicats-category}
\newcommand{\ddbicatscats}{doubly-degenerate \bicats-categories}
\newcommand{\bicatscats}{\bicats-categories}
\newcommand{\kcat}{\twocat{$K$-Cat}}
\newcommand{\kicon}{\twocat{$K$-Icon}}
\newcommand{\kpcat}{\ensuremath{\twocat{$K$-Cat}_P}}
\newcommand{\fckp}{\ensuremath{\fc_{P}}}
\newcommand{\kgpho}{\twocatt{$K$-Gph}}
\newcommand{\kpcato}{\ensuremath{\twocatt{$K$-Cat}_P}}
\newcommand{\catgphgph}{\cat{Cat-Gph-Gph}}
\newcommand{\noi}{\noindent}
\newcommand{\cat}[1]{\ensuremath{\textrm{\bfseries {\upshape {#1}}}}}
\newcommand{\set}{\cat{Set}}
\newcommand{\Set}{{\cat{Set}}}
\newcommand{\Cat}{{\cat{Cat}}}
\newcommand{\cl}[1]{\ensuremath{\mathcal {#1}}}
\newcommand{\bb}[1]{\ensuremath{\mathbb {#1}}}
\newcommand{\ed}{\end{document}}
\newcommand{\bq}{\begin{quote}}
\newcommand{\eq}{\end{quote}}
\newcommand{\bc}{\begin{center}}
\newcommand{\ec}{\end{center}}
\newcommand{\bmp}{\noi\begin{minipage}}
\newcommand{\emp}{\end{minipage}}
\newcommand{\bfr}{\begin{flushright}}
\newcommand{\efr}{\end{flushright}}
\renewcommand{\mapsto}{\stmapsto}
\newcommand{\demph}[1]{{\bfseries #1}}
\newcommand{\twocat}[1]{\makebox{\psset{unit=1mm}\pnode(-0.7,0){a}\cat{#1}\pnode(0.7,0){b}\ncline[offset=-2.5pt,linewidth=0.6pt,doubleline=true,doublesep=0.2]{a}{b}}}
\newcommand{\twocatt}[1]{\makebox{\psset{unit=1mm}\pnode(-0.7,0){a}\cat{#1}\pnode(0.7,0){b}}}
\newcommand{\fc}{\cat{fc}}
\newcommand{\ol}{\overline}
\newcommand{\vs}[1]{\vspace*{#1em}}
\newcommand{\hs}[1]{\hspace*{#1em}}
\newcommand{\tra}{{\psset{unit=0.1cm,nodesep=0pt} \pspicture(8,0)
\pcline{->}(1,1.1)(7,1.1) \endpspicture}}
\newcommand{\mtra}{{\psset{unit=0.1cm,nodesep=0pt} \pspicture(10,0)
\pcline{->}(1,1.1)(9,1.1) \endpspicture}}
\newcommand{\ltra}{{\psset{unit=0.1cm,nodesep=0pt} \pspicture(15,0)
\pcline{->}(1.5,1.4)(13.5,1.4) \endpspicture}}
\newcommand{\tramap}[1]{{\psset{unit=0.1cm,nodesep=0pt,labelsep=1pt} \pspicture(8,4)
\pcline{->}(1,1)(7,1)\naput[npos=0.45]{\ensuremath{\scriptstyle{#1}}} \endpspicture}}
\newcommand{\tmap}{\tramap}
\newcommand{\tmapiso}[1]{{\psset{unit=0.1cm,nodesep=0pt,labelsep=1pt} \pspicture(8,4)
\pcline{->}(1,1)(7,1)\naput[npos=0.45]{\ensuremath{\scriptstyle{#1}}}
\nbput[npos=0.45]{\scr $\sim$}\endpspicture}}
\newcommand{\mtmap}[1]{{\psset{unit=0.1cm,nodesep=0pt,labelsep=1pt} \pspicture(10,4)
\pcline{->}(1,1)(9,1)\naput[npos=0.45]{\ensuremath{\scriptstyle{#1}}} \endpspicture}}
\newcommand{\mtmapiso}[1]{{\psset{unit=0.1cm,nodesep=0pt,labelsep=1pt} \pspicture(10,4)
\pcline{->}(1,1)(9,1)\naput[npos=0.45]{\ensuremath{\scriptstyle{#1}}}
\nbput[npos=0.45]{\scr $\sim$}\endpspicture}}
\newcommand{\ltramap}[1]{{\psset{unit=0.1cm,nodesep=0pt,labelsep=1pt} \pspicture(15,4)
\pcline{->}(1.5,1.1)(13.5,1.1)\naput{\ensuremath{\scriptstyle{#1}}} \endpspicture}}
\newcommand{\ltmap}{\ltramap}
\newcommand{\stmapsto}
{{\psset{unit=0.1cm,nodesep=0pt} \pspicture(6.4,0) 
\pcline{|->}(0.8,1.1)(5.8,1.1) \endpspicture}}
\newcommand{\tmapsto}{{\psset{unit=0.1cm,nodesep=0pt} \pspicture(8,0) 
\pcline{|->}(1,1.2)(7,1.2) \endpspicture}}
\newcommand{\trta}{{\psset{unit=0.1cm,nodesep=0pt} \pspicture(8,0)
\pcline[doubleline=true,arrowinset=0.7,arrowlength=0.8, arrowsize=3.5pt 1.5]{->}(1,1.1)(7,1.1) \endpspicture}}
\newcommand{\Tra}{\trta}
\newcommand{\Tmap}[1]{{\psset{unit=0.1cm,nodesep=0pt,labelsep=2pt} \pspicture(8,4)
\pcline[doubleline=true, arrowinset=0.7, arrowlength=0.8, arrowsize=3.5pt 1.5]{->}(1,1.1)(7,1.1)\naput[npos=0.45]{\ensuremath{\scriptstyle{#1}}} \endpspicture}}
\newcommand{\msTmap}[1]{{\psset{unit=0.1cm,nodesep=0pt,labelsep=2pt} \pspicture(7,4)
\pcline[doubleline=true, arrowinset=0.7, arrowlength=0.8, arrowsize=3.5pt 1.5]{->}(1,1.1)(6,1.1)\naput[npos=0.45]{\ensuremath{\scriptstyle{#1}}} \endpspicture}}
\newcommand{\myps}{
\begin{small}
\pspicture
}
\newcommand{\emyps}{
\endpspicture
\end{small}
}
\newcommand{\inv}{^{-1}}
\renewcommand{\|}{\hs{0.15}|\hs{0.15}}
\newcommand\Tstrut{\rule{0pt}{2.4ex}}         
\newcommand{\myhline}{\hline\Tstrut}  
\newcommand{\vtp}[2]{\ensuremath{\setlength{\arraycolsep}{0.1ex}\hs{-0.1}\begin{array}{c}#1\\ \myhline #2 \end{array}\hs{-0.1}}}
\newcommand{\htp}[2]{\ensuremath{\setlength{\arraycolsep}{0.6ex}\hs{-0.2}\begin{array}{c|c}#1 & #2\end{array}\hs{-0.2}}}
\newcommand{\fourtp}[4]{\ensuremath{\setlength{\arraycolsep}{0.6ex}\hs{-0.2}\begin{array}{c|c}#1 & #2 \\ \myhline #3 & #4\end{array}\hs{-0.2}}}
\newcommand{\eighttp}[8]{\ensuremath{\hs{-0.2}\begin{array}{c|c}#1 & #2 \\ \myhline #3 & #4 \\ \myhline #5 & #6 \\ \myhline #7 & #8 \end{array}\hs{-0.2}}}
\newcommand{\onefouronetp}[6]{\ensuremath{\hs{-0.2}\begin{array}{c}#1 \\
\myhline \hs{-0.4} \fourtp{#2}{#3}{#4}{#5} \hs{-0.4} \\
\myhline #6\end{array}\hs{-0.2}}}
\newcommand{\fourvtp}[4]{\ensuremath{\setlength{\arraycolsep}{0.1ex}\hs{-0.2}\begin{array}{c}#1 \\ \myhline #2 \\ \myhline #3 \\ \myhline #4\end{array}\hs{-0.2}}}
\newcommand{\threevtp}[3]{\ensuremath{\setlength{\arraycolsep}{0.1ex}\begin{array}{c}#1 \\ \myhline #2 \\ \myhline #3 \end{array}}}
\newcommand{\threevtpp}[3]{\ensuremath{\setlength{\arraycolsep}{0.1ex}\begin{array}{c}#1 \\ \hline \\[-10pt] \raisebox{-0.5em}{\ensuremath{#2}} \\[14pt] \hline \\[-10pt] #3 \end{array}}}
\newlength\myfntht
\newcommand{\trimbat}{\cite{che16}}
\newcommand{\chp}{\cite{chp1}}
\newcommand{\wvc}{\cite{cc1}}
\renewcommand{\cat}[1]{\ensuremath{\textrm{\bfseries {\sf {#1}}}}}
\newlength{\currentindent}
\newlength{\mpt}
\def\footnoterule{\relax%
  \kern 3pt
  \hbox to \columnwidth{\vrule width 0.4\columnwidth height 0.3pt}
  \kern6pt}
\numberwithin{equation}{section}
\theoremstyle{plain}
\newtheorem{theorem}{Theorem}[section]
\newtheorem{proposition}[theorem]{Proposition}
\newtheorem{cor}[theorem]{Corollary}
\theoremstyle{definition}
\newtheorem{definition}[theorem]{Definition}
\newtheorem{example}[theorem]{Example}
\newtheorem{eg}[theorem]{Example}
\newtheorem{examples}[theorem]{Examples}
\newtheorem{nonexample}[theorem]{Non-example}
\newtheorem{remark}[theorem]{Remark}
\newtheorem{remarks}[theorem]{Remarks}
\newtheorem{exercise}[theorem]{Exercise}
\newtheorem{note}[theorem]{Note}
\newtheorem{question}[theorem]{Question}
\newtheorem{questions}[theorem]{Questions}
\newtheorem{algorithm}[theorem]{Algorithm}
\newtheorem{method}[theorem]{Method}
{ \end{sf}\end{framed}\end{minipage}
\end{center}}
\begin{document}


\title{Weak vertical composition II: totalities}

\author{Eugenia Cheng \\  School of the Art Institute of Chicago \\E-mail: info@eugeniacheng.com \\[12pt]
Alexander S. Corner\\
Sheffield Hallam University\\
E-mail: alex.corner@shu.ac.uk
}


\maketitle


\begin{abstract}
We continue our study of semi-strict tricategories in which the only weakness is in vertical composition.  We assemble the doubly-degenerate such tricategories into a 2-category, defining weak functors and transformations. We exhibit a biadjoint biequivalence between this 2-category and the 2-category of braided (weakly) monoidal categories, braided (weakly) monoidal functors, and monoidal transformations.
\end{abstract}


\setcounter{tocdepth}{2}
\tableofcontents


\section*{Introduction}
\addcontentsline{toc}{section}{Introduction}

In this paper we continue the study of weak vertical composition begun in \cite{cc1}. We study semi-strict tricategories in which everything is strict except vertical composition, that is, composition along bounding 1-cells. These tricategories can be conveniently constructed as categories enriched in \bicats, the category of bicategories and strict functors, with monoidal structure given by cartesian product. 

In \wvc\ we showed that any \ddbicatscat\ $X$ has an underlying braided monoidal category $\UX$, and that given any \bmc\ $B$ there is a \ddbicatscat\ $\Sigma B$ such that $U \Sigma B$ is braided monoidal equivalent to $B$. This shows that weak vertical composition is ``enough'' to achieve \bmcs\ in the \dd\ case, a typical test case for whether a theory of tricategories is fully weak, and a special case of the study of $k$-degenerate $n$-categories \cite{bd3, cg1, cg3, cg4}.  

That work followed on from \cite{jk1} which proved an analogous result for semistrict tricategories in which everything is strict except weak horizontal units.  However, in both cases the totalities of the structures in question were not studied. 

In this paper we extend the comparison to totalities. That is, we assemble \ddbicatscats\ into a 2-category and exhibit a biequivalence with the 2-category of \bmcs; this extends the object-level comparison of \wvc.

The first task then is to construct a suitable 2-category of \ddbicatscats.  In order to make an equivalence with the 2-category of \bmcs\ we need to consider weak maps, so the first step is to make that definition. Note that as in \cite{cg3,cg4} we do not simply take homomorphisms, transformations and modifications of tricategories as this gives the ``wrong'' structure in the \dd\ case.  One issue is that this would not be expected to form a 2-category; \emph{a priori} tricategories and their higher morphisms assemble into a tetracategory that does not truncate a coherent 2-dimensional structure.  Another issue is that fully weak homomorphisms and transformations of tricategories give too much extraneous structure in the doubly-degenerate case, in the form of distinguished invertible elements arising as higher-dimensional constraint cells relating to degenerate dimensions; the idea is that degenerate dimensions should not give rise to constraint cells, but rather, we should start with some semi-strict versions of weak functors and transformations that are strict with respect to dimensions that are going to become degenerate.

To address both of these issues we follow \cite{cg3, cg4} and use Lack's icons in a higher-dimensional generalisation \cite{lac2} to ensure a coherent 2-category totality and the ``correct'' functors and transformations for the doubly-degenerate structures.  The idea behind icons is that they are ``identity component oplax natural transformations'', but the key is that the identity components are ignored and replaced by an assertion that the source and target homomorphisms agree on 0-cells. This means that the only components are 2-cells and thus icons compose strictly, so bicategories, homomorphisms and icons form a strict 2-category. The process can be iterated \cite{cg4} to give 2-dimensional totalities of weak $n$-categories where restricting to the $(n-1)$-degenerate $n$-categories then results in an appropriate 2-category of categories with extra structure (monoidal, braided monoidal, or symmetric monoidal).  We refer to these higher dimensional iterated versions generally  as ``icon-like'' or ``iconic''; the first step in this work is to make an iconic 2-category of \ddbicatscats.

In \wvc\ we characterised \ddbicatscats\ as a semi-strict form of 2-monoidal category \cite{am1} (that is a category with two monoidal structures and interchange) in which one tensor product is weak but the other tensor product and interchange are strict.  This suggests a characterisation of weak functor as a weak monoidal functor with respect to each monoidal structure, together with some interaction axiom(s).  To put this on a secure footing we will proceed abstractly via monads and distributive laws.  In Section~\ref{one} we construct \bicatscats\ as algebras for a 2-monad on the 2-category \twocat{Cat-2-Gph} of 2-graphs enriched in \Cat\ (equivalently graphs enriched in \cat{Cat-Gph}). This 2-category has iconic 2-cells, so all further constructions are then automatically iconic. The 2-monad in question is a composite of a 2-monad $V$ for vertical composition and a 2-monad $H$ for horizontal composition, composed via a strict distributive law $VH \Tra HV$ coming from strict interchange. We then have a 2-category of algebras for the composite 2-monad, weak maps of algebras, and transformations.  This is the 2-category of \bicatscats\ that we want, but we need to unravel the definitions somewhat in order to compare it with maps of braided monoidal categories.

In Section~\ref{two} we do some preliminary examination of structures arising from a 2-dimensional distributive law of general 2-monads $S$ over $T$. We characterise strict $TS$-algebras via a $T$-algebra and $S$-algebra structure together with an interaction axiom; we characterise a weak map of $TS$-algebras as a weak map with respect to the $T$-algebra structure and to the $S$-algebra structure, together with an interaction axiom.  Transformations are just transformations of the $T$-structure and the $S$-structure, with no further interaction axiom required.

In Section~\ref{three} we unravel those definitions in our case of interest. We re-characterise \ddbicatscats\ as braided monoidal categories in steps:

\begin{enumerate}

\item First we express them as $HV$-algebras.

\item We then re-express them as an $H$-algebra and $V$-algebra structure satisfying an interaction axiom coming from the distributive law (that is, a horizontal and vertical monoidal structure with interchange).

\item Finally we express them as a $V$-algebra (monoidal category) with a braiding coming from a weak Eckmann--Hilton argument.

\end{enumerate}

\noi We re-characterise a weak map of \ddbicatscats\ as a braided monoidal functor via the corresponding steps:

\begin{enumerate}

\item We start with a weak map of $HV$-algebras.

\item We re-express it as a weak map of $H$-algebras and a weak map of $V$-algebras, with an interaction condition relating to the distributive law.

\item Finally we re-express it as just a weak map of $V$-algebras plus a braiding condition.

\end{enumerate}

\noi Our overall aim is to relate (1) to (3), and (2) mediates between those steps for us.  Section~\ref{two} takes us from (1) to (2), and Section~\ref{three} takes us from (2) to (3) in our specific case.

We use a weak \eh\ argument to show that a weak map of \dd\ $HV$-algebras in our case can be characterised as just a weak map of the $V$-structures interacting well with the braiding (which itself comes from an \eh\ argument); conversely such a weak map of \dd\ $V$-algebras can be given the structure of a weak map of $HV$-algebras.  We characterise transformations similarly, and show that a transformation of the $V$-structures is automatically a transformation of the $H$-structures.  

We are then ready to construct a biadjoint biequivalence.  In Section~\ref{five} we extend the assignation $U$ defined in \wvc\ to a 2-functor
\[U\: \twocat{dd\bicats-Cat} \tra \twocat{BrMonCat}.\]
Biessential surjectivity was shown in \wvc; local essential surjectivity on 1-cells follows from Section~\ref{three}, and local full and faithfulness on 2-cells follows from Section~\ref{four}. Then by \cite{gur3} we have the main theorem:

\subsubsection*{Main theorem}

The 2-functor $\twocat{dd\bicats-Cat} \tmap{U} \twocat{BrMonCat}$ is part of a biadjoint biequivalence of 2-categories.

\bs

\noi Note that constructing a pseudo-inverse is non-trivial and we defer it to a sequel.

Finally it is worth noting that weak vertical units are much easier to deal with than weak horizontal units, as it is the weak horizontal 1-cell units that make the weak \eh\ argument tricky in a general tricategory (see for example \cite{cg3}). Using weak vertical units but strict horizontal ones avoids that technical issue.

\subsubsection*{How to read this paper quickly}

\begin{enumerate}

\item Section~\ref{examplebicatscat} gives the 2-category totality of \bicatscats\ with strict maps.

\item Theorem~\ref{weakTSmap} gives the characterisation of weak maps of $TS$-algebras via a $T$-structure and $S$-structure.

\item Section 3 contains the main content of the comparison with braided monoidal categories.

\item The Main Theorem (\ref{maintheorem}) follows immediately.

\end{enumerate}

\subsubsection*{Terminology and notation conventions}

\begin{itemize}

\item We use ``strict'' when axioms hold on the nose and ``weak'' when axioms hold up to specified constraint isomorphisms.

\item Section~\ref{twocattot} is concerned with a careful construction of 2-categorical structures, so in that section we adopt a double-underline notation for 2-category totalities to distinguish them from 1-category totalities.

\item The Appendix contains various proofs using string diagrams.

\end{itemize}


\section{\bicats-categories via distributive laws of 2-monads}
\label{one}



In this section we construct a suitable totality of \bicatscats\ for our comparison.  The two subtle features are that it needs to have appropriately weak maps (not fully weak), and that it needs to be 2-dimensional, rather than fully 4-dimensional which is what we might otherwise expect for semi-strict tricategories.  The work of \cite{cg4} showed that the ``correct'' approach for totalities of doubly-degenerate higher categories is via iterated icons rather than via the fully weak maps.  


A fully weak map of \ddbicatscats\ would include functoriality constraints for composition of 1-cells.  However, \cite{cg3} showed that this is the ``wrong'' notion for doubly-degenerate situations, because even if there is only one 0-cell and one 1-cell any constraint 2-cell would remain as a distinguished invertible element; that is, when we perform the dimension shift, the old 2-cells become 0-cells of a new lower-dimensional structure (in our case a putative braided monoidal category), and the constraint 2-cells would become distinguished invertible 0-cells, adding unwanted extra structure to our braided monoidal category.

We eliminate that issue by using stricter functors, specifically, functors that are strictly functorial on any dimension of cell that is going to become degenerate. In our case, that means we want functors that are strictly functorial with respect to 1-cell composition, and only have functoriality constraints with respect to 2-cell composition.  This is effected technically by our use of icons, because the existence of an iconic 2-cell constraint encodes an assertion that the source and target agree on objects.

Moreover, we wish to express \ddbicatscats\ via a distributive law between 2-monads, as this gives us a convenient framework for defining weak maps between them; weak maps are difficult to define in generality for higher categories.

Both of these issues are resolved by making the construction via 2-monads on the 2-category \twocat{Cat-Gph-Gph}. The algebras for the resulting 2-monad immediately form a 2-category, with 2-cells that are immediately iconic.  That is the content of this section. 

The 2-dimensional distributive law in question is just a 2-dimensional extension of a 1-dimensional distributive law that is already established \cite{che16,et1} so we will begin with an overview of the 1-dimensional version.

\subsection{Preliminaries on distributive laws}

\Dls\ were introduced by Beck in \cite{bec1} and are a way of combining two algebraic structures in a coherent way. We first recall the definitions and main results that we will be building on.  

\begin{definition}\cite{bec1} Let $S$ and $T$ be monads on a 
category \cl{C}.  A \emph{distributive law of $S$ over $T$} consists of a 
natural transformation $\lambda\: ST \Tra TS$ such that the following 
diagrams commute.

\[\psset{unit=0.1cm,labelsep=2pt,nodesep=3pt,linewidth=0.8pt}
\pspicture(80,19)


\rput(0,0){
\rput(0,0){\rnode{a}{$ST$}}
\rput(20,0){\rnode{b}{$TS$}}
\rput(10,15){\rnode{c}{$T$}}

\ncline{->}{a}{b}\nbput{\scriptsize $\lambda$}
\ncline{->}{c}{a}\nbput{\scriptsize $\eta^ST$}
\ncline{->}{c}{b}\naput{\scriptsize $T\eta^S$}

}


\rput(40,0){
\rput(0,15){\rnode{a}{$S^2T$}}
\rput(20,15){\rnode{b}{$STS$}}
\rput(40,15){\rnode{c}{$TS^2$}}
\rput(0,0){\rnode{d}{$ST$}}
\rput(40,0){\rnode{e}{$TS$}}

\ncline{->}{a}{b}\naput{\scriptsize $S\lambda$}
\ncline{->}{b}{c}\naput{\scriptsize $\lambda S$}
\ncline{->}{a}{d}\nbput{\scriptsize $\mu^ST$}
\ncline{->}{c}{e}\naput{\scriptsize $T\mu^S$}
\ncline{->}{d}{e}\nbput{\scriptsize $\lambda$}

}

\endpspicture\]

\[\psset{unit=0.1cm,labelsep=2pt,nodesep=3pt,linewidth=0.8pt}
\pspicture(0,-3)(80,18)


\rput(0,0){
\rput(0,0){\rnode{a}{$ST$}}
\rput(20,0){\rnode{b}{$TS$}}
\rput(10,15){\rnode{c}{$S$}}

\ncline{->}{a}{b}\nbput{\scriptsize $\lambda$}
\ncline{->}{c}{a}\nbput{\scriptsize $S\eta^T$}
\ncline{->}{c}{b}\naput{\scriptsize $\eta^T S$}

}


\rput(40,0){
\rput(0,15){\rnode{a}{$ST^2$}}
\rput(20,15){\rnode{b}{$TST$}}
\rput(40,15){\rnode{c}{$T^2 S$}}
\rput(0,0){\rnode{d}{$ST$}}
\rput(40,0){\rnode{e}{$TS$}}

\ncline{->}{a}{b}\naput{\scriptsize $\lambda T$}
\ncline{->}{b}{c}\naput{\scriptsize $T\lambda$}
\ncline{->}{a}{d}\nbput{\scriptsize $S\mu^T$}
\ncline{->}{c}{e}\naput{\scriptsize $\mu^TS$}
\ncline{->}{d}{e}\nbput{\scriptsize $\lambda$}

}

\endpspicture\]

%

\end{definition}

The main theorem about distributive laws tells us about new monads that arise canonically as a result of the distributive law.  In this work we will mostly be interested in the composite monad.

\begin{theorem}\cite{bec1}\label{barrwells}\label{basicdlthm}
Write $S\cat{-Alg}$ for the category of algebras for $S$, and $\cat{Kl}\hs{0.2}T$ for the Kleisli category of $T$. The following are equivalent:
\begin{itemize} \setlength{\itemsep}{1pt}
\item A distributive law of $S$ over $T$.
\item A lifting of the monad $T$ to a monad $T'$ on $S\cat{-Alg}$.
\item An extension of the monad $S$ to a monad $\tilde{S}$ on $\cat{Kl}\hs{0.2}T$.
\end{itemize}

It follows that $TS$ canonically acquires the structure of a monad, whose category of algebras coincides with that of the lifted monad $T'$, and whose Kleisli category coincides with that of $\tilde{S}$.
\end{theorem}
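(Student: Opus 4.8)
The plan is to prove Beck's theorem by establishing the two equivalences separately (distributive laws versus liftings, and distributive laws versus extensions) and then reading off the statements about $TS$ from whichever description is most convenient. The organising principle, which I would foreground, is that the four defining axioms of $\lambda\colon ST \Rightarrow TS$ split into two pairs: the two axioms involving $\eta^{S}$ and $\mu^{S}$ express compatibility of $\lambda$ with the monad structure of $S$, while the two involving $\eta^{T}$ and $\mu^{T}$ express compatibility with the monad structure of $T$. The first pair is exactly what is needed to lift $T$ to $S\cat{-Alg}$, and the second pair is exactly what is needed to extend $S$ to $\cat{Kl}\,T$. I would begin by producing the composite monad on $TS$ directly from $\lambda$: the unit is $\eta^{T}\eta^{S}$ and the multiplication is the composite $TSTS \xrightarrow{T\lambda S} TTSS \xrightarrow{\mu^{T}\mu^{S}} TS$. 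The unit laws then follow from the two $\eta$-axioms and naturality of $\lambda$, and associativity follows from the two $\mu$-axioms together with naturality; each axiom is invoked essentially once.

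For the correspondence with liftings, given $\lambda$ I would define $T'$ on an $S$-algebra $(A,a)$ by $T'(A,a) = (TA,\; Ta \circ \lambda_{A})$, acting as $T$ on morphisms. The $\mu^{S}$- and $\eta^{S}$-axioms are precisely what make $Ta \circ \lambda_{A}$ an $S$-algebra structure, and they also force $\eta^{T}$ and $\mu^{T}$ to be $S$-algebra morphisms, so that $T'$ is a monad on $S\cat{-Alg}$ lifting $T$ along the forgetful functor. Conversely, from a lifting $T'$ I would recover $\lambda$ by evaluating on free algebras: since $T'(SX,\mu^{S}_{X}) = (TSX,\theta_{X})$ for some structure map $\theta_{X}$, set $\lambda_{X} := \theta_{X}\circ ST\eta^{S}_{X}\colon STX \to TSX$. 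Checking that these two passages are mutually inverse is the routine observation that a lifting is determined by its values on free algebras.

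Dually, for the correspondence with extensions I would send a Kleisli arrow $f\colon X\to TY$ to $\lambda_{Y}\circ Sf\colon SX \to TSY$; here the $\mu^{T}$- and $\eta^{T}$-axioms guarantee that $\tilde{S}$ preserves Kleisli composition and identities and that $\eta^{S},\mu^{S}$ become monad maps, so $\tilde{S}$ extends $S$ along the identity-on-objects inclusion $\cl{C}\to\cat{Kl}\,T$. The inverse passage recovers $\lambda_{X}$ as $\tilde{S}$ applied to the canonical Kleisli arrow $TX\to X$ represented by the identity of $TX$. Finally, for the ``it follows'' clauses I would unwind definitions: a $T'$-algebra is an object $A$ equipped with an $S$-algebra structure $a$ and a $T$-algebra structure $b$ such that $b$ is an $S$-algebra map from $(TA, Ta\circ\lambda_{A})$, and the assignment $(a,b)\mapsto b\circ Ta\colon TSA \to A$ exhibits an isomorphism $T'\cat{-Alg}\cong TS\cat{-Alg}$ commuting with the forgetful functors to $\cl{C}$; the dual argument gives $\cat{Kl}\,\tilde{S}\cong\cat{Kl}\,(TS)$.

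I do not expect any conceptual obstacle here, as the result is classical; the real work is organisational. The main difficulty is keeping the numerous commuting diagrams straight and confirming that each of the four distributive-law axioms is used in exactly the place the splitting above predicts, so that none is redundant and none is missing. In practice I would streamline this by treating the lifting and the extension as formally dual (interchanging the roles of $S$ and $T$ and of algebras and coalgebras/Kleisli data), proving one carefully and obtaining the other by duality, and then deducing both coincidence statements from the respective equivalence rather than verifying them from scratch.
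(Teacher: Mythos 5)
The paper offers no proof of this theorem --- it refers the reader to Beck's original article --- so your proposal has to be judged against the standard argument rather than against anything in the text. At the level of constructions you have that argument exactly right: the composite monad with unit $\eta^T\eta^S$ and multiplication $\mu^T\mu^S \circ T\lambda S$, the lifting $T'(A,a) = (TA,\, Ta\circ\lambda_A)$ with $\lambda$ recovered on free algebras as $\theta_X \circ ST\eta^S_X$, the extension sending a Kleisli arrow $f\colon X \to TY$ to $\lambda_Y \circ Sf$ with $\lambda$ recovered by applying $\tilde S$ to the canonical Kleisli arrow $TX \to X$, and the identifications $T'\cat{-Alg} \cong TS\cat{-Alg}$ and $\cat{Kl}\hs{0.2}\tilde S \cong \cat{Kl}\hs{0.2}(TS)$ read off from these correspondences.

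There is, however, a genuine error in your organising principle, and it propagates into the supporting claims. It is not true that the $\eta^S/\mu^S$-axioms are ``exactly what is needed to lift $T$ to $S\cat{-Alg}$'' and the $\eta^T/\mu^T$-axioms ``exactly what is needed to extend $S$ to $\cat{Kl}\hs{0.2}T$'': each of the two correspondences in the theorem is with \emph{monad} liftings (resp.\ extensions), and each requires all four axioms. The correct refinement is: the $\eta^S/\mu^S$-axioms are equivalent to $Ta\circ\lambda_A$ being an $S$-algebra structure for every algebra $(A,a)$, i.e.\ to $T$ lifting as a mere \emph{functor}; the $\eta^T/\mu^T$-axioms are then equivalent to the components of $\eta^T$ and $\mu^T$ being $S$-algebra morphisms, i.e.\ to that functor carrying the (necessarily unique) lifted monad structure. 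In particular your sentence ``they [the $\mu^S$- and $\eta^S$-axioms] also force $\eta^T$ and $\mu^T$ to be $S$-algebra morphisms'' is false: to see that $\eta^T_A\colon (A,a)\to (TA, Ta\circ\lambda_A)$ is an algebra map you need precisely $\lambda_A\circ S\eta^T_A = \eta^T_{SA}$, an axiom your splitting assigns to the other side, and likewise $\mu^T$ needs the $\mu^T$-pentagon; the mirror-image misattribution occurs in your extension paragraph, where it is the $S$-axioms, not the $T$-axioms, that make $\eta^S,\mu^S$ into 2-cells of monads over $\cat{Kl}\hs{0.2}T$. As written, your plan would establish bijections with the wrong classes of data (natural transformations satisfying half the axioms classify functor liftings/extensions, not monad ones), and your own check that ``each axiom is used in exactly the place the splitting predicts'' would fail. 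A related caution: the proposed shortcut of getting the Kleisli half from the algebra half ``by duality'' is not available in an elementary 1-categorical treatment, since $\cat{Kl}\hs{0.2}T$ is the Eilenberg--Moore object only after passing to $\Cat\op$, where the concrete proof of the lifting half does not apply; either set up the formal 2-categorical duality properly or verify the extension correspondence directly.
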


%
%
%
%
%
%
%
%
%
%
%
%
%
%
%

\noi For the proof we refer the reader to \cite{bec1}.

\begin{eg}\label{ring}

{\bfseries (Rings)}

\begin{itemize}\setlength{\itemsep}{-1pt}
\item $\cl{C} = \cat{Set}$

\item $S = \mbox{free monoid monad}$

\item $T = \mbox{free abelian group monad}$

\item $\lambda = \mbox{the usual distributive law for multiplication and addition}$ e.g. \[(a+b)(c+d) \mapsto ac+bc+ad+bd.\]
\end{itemize}
\noi Then the composite monad $TS$ is the free ring monad.
\end{eg}
%
%
%
%
%
%
%

\begin{eg}\label{egtwopointfour}

{\bfseries (2-categories)}

\begin{itemize}\setlength{\itemsep}{-1pt}

\item $\cl{C}=\cat{2-GSet}$, the category of 2-globular sets.

\item $S = $ monad for vertical composition of 2-cells (1- and 0-cells are unchanged)

\item $T=$ monad for horizontal composition of 2-cells and 1-cells (0-cells are unchanged)

\item $\lambda$ is given by the interchange law, for example:
\[
\psset{unit=0.06cm,labelsep=2pt,nodesep=3pt}
\pspicture(0,2)(120,35)


\rput(21,30){\rnode{c1}{$STA$}}
\rput(97,30){\rnode{c2}{$TSA$}}

\ncline[nodesep=20pt]{->}{c1}{c2} \naput{\scr $\lambda_A$}

\rput(0,3){

\rput(21,8){\rnode{d1}{}}
\rput(97,8){\rnode{d2}{}}

\ncline[nodesepA=48pt,nodesepB=53pt]{|->}{d1}{d2}


\rput[l](0,0){

\rput(0,0){

\rput(0,10){\rnode{a1}{$\mysdot$}}  
\rput(20,10){\rnode{a2}{$\mysdot$}}  
\rput(40,10){\rnode{a3}{$\mysdot$}}

{
\rput[c](10,11.3){\psset{unit=1mm,doubleline=true,arrowinset=0.6,arrowlength=0.5
, arrowsize=0.5pt 2.1,nodesep=0pt,labelsep=2pt}
\pcline{->}(0,3)(0,0) \naput{{\scriptsize $$}}}}

{
\rput[c](30,11.3){\psset{unit=1mm,doubleline=true,arrowinset=0.6,arrowlength=0.5
, arrowsize=0.5pt 2.1,nodesep=0pt,labelsep=2pt}
\pcline{->}(0,3)(0,0) \naput{{\scriptsize $$}}}}

\nccurve[angleA=60,angleB=120,ncurv=1]{->}{a1}{a2}\naput{{\scriptsize $$}}
\ncline{->}{a1}{a2}

\nccurve[angleA=60,angleB=120,ncurv=1]{->}{a2}{a3}\naput{{\scriptsize $$}}
\ncline{->}{a2}{a3}

}

\rput[l](0,-5){

\rput(0,10){\rnode{b1}{$\mysdot$}}  
\rput(20,10){\rnode{b2}{$\mysdot$}}  
\rput(40,10){\rnode{b3}{$\mysdot$}}

\ncline{->}{b1}{b2}
\nccurve[angleA=-60,angleB=-120,ncurv=1]{->}{b1}{b2}\nbput{{\scriptsize $$}}

\ncline{->}{b2}{b3}
\nccurve[angleA=-60,angleB=-120,ncurv=1]{->}{b2}{b3}\nbput{{\scriptsize $$}}

{
\rput[c](10,3.8){\psset{unit=1mm,doubleline=true,arrowinset=0.6,arrowlength=0.5
, arrowsize=0.5pt 2.1,nodesep=0pt,labelsep=2pt}
\pcline{->}(0,3)(0,0) \naput{{\scriptsize $$}}}}

{
\rput[c](30,3.8){\psset{unit=1mm,doubleline=true,arrowinset=0.6,arrowlength=0.5
, arrowsize=0.5pt 2.1,nodesep=0pt,labelsep=2pt}
\pcline{->}(0,3)(0,0) \naput{{\scriptsize $$}}}}

}
}

\rput(75,-2.5){

\rput(0,0){
\rput(0,10){\rnode{a1}{$\mysdot$}}  
\rput(20,10){\rnode{a2}{$\mysdot$}}  

\nccurve[angleA=60,angleB=120,ncurv=1]{->}{a1}{a2}\naput{{\scriptsize $$}}
\nccurve[angleA=-60,angleB=-120,ncurv=1]{->}{a1}{a2}\nbput{{\scriptsize $$}}

\ncline{->}{a1}{a2}

\rput[c](10,11.3){
\psset{unit=1mm,doubleline=true,arrowinset=0.6,arrowlength=0.5
, arrowsize=0.5pt 2.1,nodesep=0pt,labelsep=2pt}
\pcline{->}(0,3)(0,0) \naput{{\scriptsize $$}}}

\rput[c](10,3.8){
\psset{unit=1mm,doubleline=true,arrowinset=0.6,arrowlength=0.5
, arrowsize=0.5pt 2.1,nodesep=0pt,labelsep=2pt}
\pcline{->}(0,3)(0,0) \naput{{\scriptsize $$}}}

}

\rput(25,0){
\rput(0,10){\rnode{a1}{$\mysdot$}}  
\rput(20,10){\rnode{a2}{$\mysdot$}}  

\nccurve[angleA=60,angleB=120,ncurv=1]{->}{a1}{a2}\naput{{\scriptsize $$}}
\nccurve[angleA=-60,angleB=-120,ncurv=1]{->}{a1}{a2}\nbput{{\scriptsize $$}}

\ncline{->}{a1}{a2}

\rput[c](10,11.3){
\psset{unit=1mm,doubleline=true,arrowinset=0.6,arrowlength=0.5
, arrowsize=0.5pt 2.1,nodesep=0pt,labelsep=2pt}
\pcline{->}(0,3)(0,0) \naput{{\scriptsize $$}}}

\rput[c](10,3.8){
\psset{unit=1mm,doubleline=true,arrowinset=0.6,arrowlength=0.5
, arrowsize=0.5pt 2.1,nodesep=0pt,labelsep=2pt}
\pcline{->}(0,3)(0,0) \naput{{\scriptsize $$}}}

}

}

}

\endpspicture
\]
\end{itemize}
\end{eg}

This is all we need about \dls\ for this section; in Section~\ref{two} we will revisit \dls\ to examine more closely how to express $TS$-algebra structures via $T$- and $S$-algebra structures interacting.

\subsection{The \dl\ in the 1-dimensional setting}

We will now lay out the 1-dimensional version of the distributive law for operadic weak $n$-categories; see \trimbat\ for full details.  This is a higher level of generality than we need in this work, but we choose to work at this level as it can then also be used for Trimble 3-categories, which we will study in a sequel. So we will parametrise all our composition by the action of operads. When composition is strict it will simply be the terminal operad.

The 1-dimensional \dl\ in question is a generalisation of the one given by Leinster for $n$-categories in \cite{lei8}. It was further studied in an iterated version in \cite{che17} and with operad actions in \cite{che16,et1}. The formulae are somewhat complicated by the operad actions, but the ideas and the proofs are the same.

Our basic setup is an iterative operadic theory of $n$-categories, a generalisation of Trimble's definition \cite{tri1} given in \trimbat. The iteration is by ``operadic enrichment'', where we form categories enriched in $\cV$ with composition parametrised by an operad $P$ in $\cV$.

\begin{definition}
Let $\cV$ be a category with finite products, and $P$ an operad in it.  A (small) $(\cV, P)$-category $A$ is given by:

\begin{itemize}
\item an underlying $\cV$-graph, that is, a set $A_0$ of objects and for all $a, b \in A_0$ a hom-object $A(a,b) \in \cV$.

\item For all $k \geq 0$ and $a_0, \ldots, a_k \in A_0$ a composition morphism
\[P(k) \times A(a_{k-1}, a_k) \times \cdots \times A(a_0, a_1) \tra A(a_0, a_k)\]
\end{itemize}

\noi compatible with the composition and identities of the operad in the usual way (as for algebras). Morphisms are defined in the obvious way, giving a category \cat{$(\cV, P)$-Cat}, which also has products.   We will also write this category \vpcat, to facilitate the notation for iteration.

\end{definition}

\begin{definition} An \emph{iterative operadic theory of $n$-categories} is a series $(\cV_n, P_n)$ where each $\cV_n$ is a category with finite products and each $P_n$ is an operad in $\cV_n$, with

\[\cV_{n+1} = \vnpncat\]

\noi Then $\cV_n$ is the category of $n$-categories and strict functors, according to this theory.
\end{definition}

In this work we will start this induction with $\cV_0 = \set$ and $P_0 = 1$ and thus $\cV_1 = \Cat$. We then have

\begin{itemize}

\item an operad $P_1 \in \Cat$, and $\cV_2 = \voneponecat$
\item an operad $P_2 \in \cV_2$, and $\cV_3 = \vtwoptwocat = \cV_1\cat{-Cat}_{P_1}\cat{-Cat}_{P_2}$

\end{itemize}

One convenient aspect of this form of definition is that many results about strict iterated enrichment generalise straightforwardly, with just some notational complication as we must put operad actions everywhere.  The first step is the free category construction.

In this work we will always be enriching with respect to the cartesian monoidal structure. To make the free category construction we also invoke small coproducts, and the products must distribute over them; we call this an ``infinitely distributive'' category.  Shulman \cite{shul2} works with general monoidal structures and so uses the terminology ``$\otimes$-distributive categories''. The following result is from \cite{che16,et1}, and gives us the general free $(\cV,P)$-category monad we need. Elsewhere this monad is written as $\fcvpp$, but we will write it as $\fcvp$ to streamline the subscripts.


\begin{proposition}\label{prop1}
Given any infinitely distributive category $\cV$ and operad $P$ in it, \vpcat\ is monadic over \cat{$\cV$-Gph} via a monad $\fcvp$, and the category \vpcat\ is in turn infinitely distributive.
\end{proposition}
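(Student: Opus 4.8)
The plan is to construct the free $(\cV,P)$-category explicitly and then identify the algebras for the resulting monad by hand, rather than invoking a general monadicity theorem: colimits in \vpcat\ are awkward because object-sets vary, so verifying Beck's conditions is less pleasant than a direct comparison of algebra structures. Throughout, the hypothesis of infinite distributivity is exactly what makes the free construction well-defined.

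First I would exhibit a left adjoint $F$ to the forgetful functor $U\colon \vpcat \tra \vgph$. On objects $F$ leaves the object-set $X_0$ unchanged, and on hom-objects it takes the sum over composable strings weighted by the operad,
\[
(FX)(a,b) \;\cong\; \coprod_{k\ge 0}\ \coprod_{\substack{a_0,\dots,a_k\in X_0\\ a_0=a,\ a_k=b}} P(k)\times X(a_{k-1},a_k)\times\cdots\times X(a_0,a_1).
\]
Small coproducts guarantee this object exists, and distributivity of products over them is what lets me define the composition morphism on $FX$, by concatenating strings and applying the operad substitution $P(k)\times P(n_1)\times\cdots\times P(n_k)\tra P(n_1+\cdots+n_k)$, and to check it lands in the correct summand; the $k=0$ coprojections $P(0)\tra(FX)(a,a)$ supply the units, and the operad identity $1\tra P(1)$ supplies the adjunction unit $X\tra UFX$ via the length-one summand.

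Next I would check the universal property: any $\cV$-graph map $X\tra UA$ into a $(\cV,P)$-category $A$ extends uniquely to a $(\cV,P)$-functor $FX\tra A$, the extension on the length-$k$ summand being forced by iterated composition in $A$ (distributivity is used once more to map out of the coproduct). This gives $F\dashv U$ and hence a monad $\fcvp=UF$ on \vgph. To obtain monadicity I would then show the comparison functor from \vpcat\ to the Eilenberg--Moore category of $\fcvp$ is an isomorphism by producing an explicit inverse: composing an algebra $h\colon\fcvp X\tra X$ with the unit pins $h$ down on the length-one part as the identity, the length-two summands give the binary composites, and conversely a $(\cV,P)$-structure assembles into such an $h$, with the monad-algebra axioms corresponding precisely to the operad-compatibility axioms for composition.

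For the second claim I would verify the three pieces of infinite distributivity for \vpcat\ directly. Finite products are the pointwise products already noted (object-set $A_0\times B_0$ and hom $(A\times B)\big((a,b),(a',b')\big)=A(a,a')\times B(b,b')$). Small coproducts are disjoint unions: object-set $\coprod_i (A_i)_0$, with hom-objects inherited within a single component and equal to the initial object $0$ of $\cV$ across components, which is coherent because $-\times 0\cong 0$ by distributivity (this also forces the cross-component composition morphisms). The law $A\times\coprod_i B_i\cong\coprod_i(A\times B_i)$ then reduces on object-sets to distributivity in \Set\ and on hom-objects to distributivity in $\cV$. The main obstacle is the bookkeeping in the monadicity step, namely matching the multiplication of $\fcvp$ — which threads the operad substitution maps through the re-bracketing of strings — against the associativity and unit axioms of $(\cV,P)$-category composition, so that an $\fcvp$-algebra is seen to \emph{be} a $(\cV,P)$-category rather than merely to carry one; everything else is a distributivity-driven but routine verification.
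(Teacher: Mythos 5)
Your proposal is correct and is essentially the argument the paper relies on: the paper cites \cite{che16,et1} rather than proving Proposition~\ref{prop1} itself, but your explicit construction of $\fcvp$ (objects fixed, homs the operad-weighted coproduct over composable strings), the unit via $1\tra P(1)$ on the length-one summand, the multiplication via concatenation plus operad substitution, the identification of $\fcvp$-algebras with $(\cV,P)$-categories by an isomorphism of comparison functors, and the pointwise/disjoint-union description of products and coproducts in \vpcat\ are exactly the ingredients the paper restates when proving the 2-categorical version (Proposition~\ref{propfctwocat}). One small correction: for a general operad, the algebra unit axiom $h\circ\eta=\mathrm{id}$ pins down $h$ only on the image of the operad unit $1\tra P(1)$ inside the length-one summand --- the rest of that summand records the unary $P(1)$-action, not the identity --- though this does not affect your comparison, which correctly reads off the $k$-ary compositions from the length-$k$ summands.
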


So we know that $\cV_3$ is monadic over \cat{$\cV_2$-Gph}; our aim now is to show that $\cV_3$ is monadic over \cat{$\cV_1$-Gph-Gph}, and construct the monad via a distributive law.  The idea is that if we think of $\cV_3$ as $\cV_1\cat{-Cat}_{P_1}\cat{-Cat}_{P_2}$ we can do a free category construction on the middle component (with the $P_1$ subscript) and on the last component (with the $P_2$ subscript) separately.  The middle one constructs 1-composites and the last one constructs $0$-composites.  The following two forgetful functors produce our two monads for the distributive law.

%

\[\ps(-20,0)(20,20)

\rput(0,0){\rnode{b}{$\cat{$\cV_1$-Gph-Gph}$}}  
\rput(-20,16){\rnode{tl}{\cat{$\cat{$\cV_1$-Cat}_{P_1}$-Gph}}}  
  \rput(20,16){\rnode{tr}{$\cat{$\cV_1$-Gph-Cat}_{\UP_2}$}}

\ncline{->}{tl}{b} \naput{{\scriptsize $$}}
\ncline{->}{tr}{b} \naput{{\scriptsize $$}}

\eps\]

In order to construct the monad for 1-composition we use the following 2-functor.

\begin{definition}\label{astone} The assignation $\cV \tmapsto \cV\cat{-Gph}$ extends to a 2-functor 
\[\twocat{Cat}\tra \twocat{Cat}\]  

\noi We write the action on functors as $(\ )_\ast$, so given a functor $\cV \tmap{F} \cW$ we induce a functor $\cV\cat{-Gph} \tmap{F_\ast} \cW\cat{-Gph}$ in the obvious way; likewise for natural transformations. Thus a monad $T$ on $\cV$ induces a monad $T_\ast$ on $\cV\cat{-Gph}$.
\end{definition}

\begin{remark}
Note that $T_\ast$ acts on a $\cV$-graph by leaving the objects unchanged, and just acting as $T$ on the homs.
\end{remark}

%
%
%
%
%
%
%
%
%
%
%
%
%

\begin{definition}[Monad for vertical composition]\label{verticalone}

We know that $\cV_2 = \cV_1\cat{-Cat}_{P_1}$ is monadic over \VoneGph\ with monad \fcvonepone. Write $T_1 = (\fcvonepone)_*$ for the induced monad on \vonegphgph, with $\cat{$T_1$-Alg} \cong \vtwogph$.

\end{definition}

\begin{remark}
The monad $T_1$ acts on a $\cV_1$-graph-graph by leaving the 0-cells unchanged and then forming the free $(\cV_1,P_1)$-category on each hom $\cV_1$-graph, that is, it makes vertical composition freely.
\end{remark}

Next we construct the monad for horizontal composition. This comes from the monad for free $(\cV_2, P_2)$-categories, but that monad already encodes interaction with vertical composition. So we invoke the forgetful functor $\cV_2 \tra \cV_1\cat{-Gph}$ to forget that part, so that the monads for horizontal and vertical composition can be applied separately; the interaction will be encoded in the distributive law.

\begin{definition}[Monad for horizontal composition]\label{verticaltwo}
We know that \vthree\ is monadic over $\vtwogph = \voneponecatgph$. We also have a forgetful functor
\[\vtwo = \voneponecat \mtmap{U} \vonegph.\]
As $U$ preserves products, the operad $P_2 \in \vtwo$ becomes an operad 
$\UP_2 \in \vonegph$, and we can form $(\vonegph, \UP_2)$-categories.  Then by Proposition~\ref{prop1} we know that $\cat{\vonegph-Cat}_{\UP_2}$ is monadic over \vonegphgph\ with monad $\fc_{\UP_2}$. Call this monad $T_0$.

\end{definition}

Note that as we are enriching with respect to strict functors, we have strict interchange at all levels, parametrised by actions of the operads in question. In this case, vertical composition is parametrised by $P_1$ and horizontal composition is parametrised by $P_2$.  Note that as $P_2$ is an operad in \voneponecat\ there is also an action of $P_1$ on $P_2$, and this has to be invoked when performing parametrised interchange (see \trimbat).

\begin{proposition}\label{prop4}

There is a \dl\ $T_1T_0 \Tra T_0T_1$ with $\cat{$T_0T_1$-Alg} \cong \vthree$ and the lifted monad $\hat{T_0}$ on \tonealg\ is \fcvtwoptwo.

\end{proposition}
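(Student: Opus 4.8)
The plan is to avoid verifying Beck's four coherence axioms for an explicit $\lambda$ by hand, and instead produce the distributive law by exhibiting a \emph{lift} of $T_0$ to $T_1$-algebras, invoking Theorem~\ref{basicdlthm} in the guise ``a distributive law of $T_1$ over $T_0$'' $\leftrightarrow$ ``a lift of $T_0$ to \cat{$T_1$-Alg}''. Recall from Definition~\ref{verticalone} that $\cat{$T_1$-Alg} \cong \vtwogph$, with forgetful $2$-functor $V \colon \vtwogph \tra \vonegphgph$; and from Proposition~\ref{prop1} applied with $\cV = \cV_2$ and $P = P_2$ that $\vthree = \cV_2\cat{-Cat}_{P_2}$ is monadic over $\vtwogph$ via the free-category monad $\fcvtwoptwo$. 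I would claim outright that $\fcvtwoptwo$ \emph{is} the sought lift $\hat{T_0}$, and organise the entire proof around verifying this one claim, since everything else is then formal.

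The heart of the argument is therefore to check that $\fcvtwoptwo$, viewed as an endofunctor of $\vtwogph$, descends along $V$ to $T_0 = \fc_{\UP_2}$ compatibly with the monad structure, i.e.\ $V\fcvtwoptwo = T_0 V$ as monads. Both monads are given by the same free enriched category formula: on homs one forms the coproduct over paths of products of hom-objects with the relevant operad component, the only difference being that $\fcvtwoptwo$ uses $P_2$ with products and coproducts taken in $\cV_2$, whereas $T_0$ uses $\UP_2$ with products and coproducts taken in $\vonegph$. Since $U \colon \cV_2 \tra \vonegph$ preserves products (Definition~\ref{verticaltwo}) it carries $P_2$ to $\UP_2$ and matches the product factors; and since in the infinitely distributive setting a coproduct of $(\cV_1,P_1)$-categories is a disjoint union whose underlying $\cV_1$-graph is the disjoint union of the underlying $\cV_1$-graphs, $U$ also preserves the path-indexed coproducts. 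Thus forgetting the vertical $P_1$-structure on homs commutes with the free horizontal construction, giving $V\fcvtwoptwo = T_0 V$ on underlying data; the unit (inclusion of length-one paths) and multiplication (concatenation of paths) agree on both sides, so this is an equality of monads and $\fcvtwoptwo$ genuinely lifts $T_0$.

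With the lift established, Theorem~\ref{basicdlthm} immediately supplies a \dl\ $T_1 T_0 \Tra T_0 T_1$ and identifies the algebras of the composite monad $T_0 T_1$ with those of the lift $\hat{T_0} = \fcvtwoptwo$; combined with the monadicity of $\vthree$ over $\vtwogph$ via $\fcvtwoptwo$ from Proposition~\ref{prop1}, this yields $\cat{$T_0T_1$-Alg} \cong \vthree$, completing the statement. I expect the genuine obstacle to lie entirely in the lift verification, and specifically in the operad bookkeeping rather than the coproduct-preservation point, which is the routine half. One must track not only the outer operad $\UP_2$ but also the residual action of $P_1$ on $P_2$ that is invoked in parametrised interchange (as flagged just before this proposition), and confirm that forgetting the $P_1$-structure on homs is compatible with it, so that the two free constructions really coincide \emph{on the nose} once all operad parametrisations are inserted. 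This is exactly the place where the detailed formulae of \trimbat\ are needed, and where ``the ideas are the same'' but the indices proliferate.
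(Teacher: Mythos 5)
Your proposal is correct, but there is nothing in the paper to compare it against: the paper states Proposition~\ref{prop4} without proof, deferring all details to \trimbat\ (the subsection opens with ``see \trimbat\ for full details'', and the surrounding text indicates the intended argument is the operadic generalisation of Leinster's explicit interchange construction from \cite{lei8,che17}). Your route is thus a genuine alternative to writing down $\lambda$ by hand and checking Beck's four axioms: exhibit the strict lift $\hat{T_0} = \fcvtwoptwo$ along the forgetful functor $V\: \vtwogph \tra \vonegphgph$ and invoke Theorem~\ref{basicdlthm}. For this particular statement that is arguably the better-adapted proof, since the two remaining claims then cost nothing: $\cat{$T_0T_1$-Alg} \cong \cat{$\hat{T_0}$-Alg} \cong \vthree$ by Proposition~\ref{prop1} applied to $(\cV_2, P_2)$, and ``the lifted monad is \fcvtwoptwo'' holds by construction, whereas on the explicit-$\lambda$ route both would need separate identification.

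One correction of emphasis, though. The step you flag as the genuine obstacle --- tracking the action of $P_1$ on $P_2$ --- is in fact invisible to the lift verification: that action is exactly part of the $\cV_2$-structure that $V$ forgets, and it resurfaces only when one unpacks what the induced \dl\ does on algebras (parametrised interchange); it cannot obstruct the equations $V\hat{T_0} = T_0 V$, $V\hat{\eta} = \eta V$, $V\hat{\mu} = \mu V$. The real content lies in the steps you call routine: (i) $U\: \cV_2 \tra \vonegph$ preserves the finite products in the free-category formula (true, $U$ is monadic) \emph{and} the path-indexed small coproducts (true because coproducts of $(\cV_1,P_1)$-categories are disjoint unions with initial mixed homs, which needs $X \times \init \cong \init$, i.e.\ the infinite distributivity of $\cV_1$); and (ii) Beck's correspondence as stated requires a \emph{strict} lift, i.e.\ an equality of monads, not merely the canonical isomorphism that preservation provides. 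Point (ii) is settled by constructing the homs of $\fcvtwoptwo X$ as the $T_0$-formula applied to the underlying $\cV_1$-graphs, equipped with the componentwise $(\cV_1,P_1)$-structure --- possible because these particular (co)limits in $\cV_2$ are computed on underlying graphs --- after which the units and multiplications also agree on the nose, the multiplications because the operad structure of $\UP_2$ is by definition $U$ applied to that of $P_2$. With those two points made explicit, your argument is complete.
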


\begin{remark}

The monad for 3-categories can be decomposed further by also invoking the monad for composition along bounding 2-cells. However, as we do not need to weaken this composition, we do not need to reference this monad.

\end{remark}

Note that for this work we will be taking $P_1$ to be the operad for bicategories, that is, the free operad generated by one binary and one nullary operation.  We will take $P_2 = 1$ (the terminal operad in \cat{Cat}) so that horizontal composition is strict.

\subsection{The 2-category totalities}
\label{twocattot}

We now construct the iconic 2-category totalities we will be working with. Icons were introduced by Lack \cite{lac2}, giving a convenient 2-dimensional totality of bicategories, and were iterated in \cite{gg1} to give a convenient 2-dimensional totality of tricategories.  Regarding bicategories as weak \cat{Cat}-categories, the definition of icon can be generalised to weak $K$-categories for bicategories $K$ other than \cat{Cat}, yielding an iconic bicategory of weak $K$-categories. Under some mild conditions the generalised construction can then be iterated \cite{cg4}.  For this work we generalise in a slightly different direction, as we need to add in the action of an operad but we do not need $K$ to be weak.  As a result, we have strict 2-category totalities at every stage of the iteration, which we see as a great advantage of this operadic approach.

We will begin with the basic definitions, before introducing the operad actions.

\begin{definition}

Let $K$ be a 2-category.  Then a \emph{$K$-graph} $X$ is given by 
\begin{itemize}
\item a set $X_0$ of 0-cells, and
\item for all $a,b \in X_0$ a hom-object $X(a,b)$ which is a 0-cell of $K$.
\end{itemize}

\noi A \emph{morphism $X\tmap{F} Y$ of $K$-graphs} is given by
\begin{itemize}
\item a function $X_0 \tmap{F_0} Y_0$, and
\item for all $a,b \in X_0$ a 1-cell $X(a,b) \mtmap{F_{ab}} Y(Fa, Fb) \in K$.

\end{itemize}

\noi Given morphisms $F,G\: X \tra Y$ of $K$-graphs, a transformation $\alpha\: F \Tra G$ can exist only when $F$ and $G$ agree on 0-cells. In that case such a transformation is given by
\begin{itemize}

\item for all $a, b \in X_0$ a 2-cell in $K$

\[
\psset{unit=0.09cm,labelsep=2pt,nodesep=2pt}
\pspicture(-14,-14)(14,14)

\rput(-20,0){\rnode{a1}{$X(a,b)$}}  
\rput(20,4){\rnode{a2}{$Y(Fa,Fb)$}}  
\rput(20,0){\rotatebox{90}{$=$}}
\rput(20,-4){\rnode{a3}{$Y(Ga, Gb)$}}  

\ncarc[arcangle=45]{->}{a1}{a2}\naput[npos=0.55]{{\scriptsize $F$}}
\ncarc[arcangle=-45]{->}{a1}{a3}\nbput[npos=0.55]{{\scriptsize $G$}}

{
\rput[c](0,0){\psset{unit=1mm,doubleline=true,arrowinset=0.6,arrowlength=0.5,arrowsize=0.5pt 2.1,nodesep=0pt,labelsep=2pt}
\pcline{->}(0,2.5)(0,-2.5) \nbput{{\scriptsize $\alpha_{ab}$}}}}

\endpspicture
\]

\end{itemize}

\noi $K$-graphs, their morphisms and their 2-morphisms assemble into a 2-category \twocat{$K$-Gph}, where the composition and identities are inherited from $K$.

\end{definition}

\begin{remark}

Note that if $K$ has underlying 1-category $K_1$, then a $K$-graph is no different from a $K_1$-graph; likewise morphisms between them.  The only difference is that the 2-cells of $K$ enable us to put a 2-dimensional structure on the totality.  That is, the  underlying 1-category of \twocat{$K$-Gph} is the same as the 1-category \cat{$K_1$-Gph}.

\end{remark}

In what follows $K$ could be a tensor distributive monoidal 2-category as in \cite{shul2}.   However in all our cases we will use products, so we need $K$ to have small coproducts, and finite products that distribute over them (using the strictest 2-dimensional version of (co)limits); we will call such a 2-category ``infinitely distributive''.  The definition of $K$-icon is given in \cite{cg4} in a fully weak setting using monoidal bicategories; our setting is simplified by us using strict enrichment in a strict 2-category, and strict functors.

As with $K$-graphs, the concepts of $K$-category and $K$-functor are no different from $K_1$-category and $K_1$-functor; the difference is that we now have 2-dimensional structure with which to assemble $K$-categories into a 2-category.

\begin{definition} Let $K$ be a 2-category with products.  Then a category enriched in $K$ is just a category enriched in its underlying 1-category $K_1$, and a $K$-functor is just a $K_1$-functor.
\end{definition}

\begin{definition}[$K$-icons]

Let $X,Y$ be $K$-categories, and let $F,G \: X \tra Y$ be strict $K$-functors such that $Fa=Ga$ for all objects $a \in X$.  A \demph{$K$-icon}
\[
\psset{unit=0.1cm,labelsep=2pt,nodesep=2pt}
\pspicture(20,20)

\rput(0,10){\rnode{a1}{$X$}}  
\rput(20,10){\rnode{a2}{$Y$}}  

\ncarc[arcangle=45]{->}{a1}{a2}\naput{{\scriptsize $F$}}
\ncarc[arcangle=-45]{->}{a1}{a2}\nbput{{\scriptsize $G$}}

\pcline[linewidth=0.6pt,doubleline=true,arrowinset=0.6,arrowlength=0.8,arrowsize=0.5pt 2.1]{->}(10,13)(10,7)  \naput{{\scriptsize $\alpha$}}


\endpspicture
\]
is given by, for all pairs of objects $a,b \in X$ a 2-cell

\[
\psset{unit=0.1cm,labelsep=2pt,nodesep=2pt}
\pspicture(0,-4)(30,24)

\rput(0,10){\rnode{a1}{$X(a,b)$}}  
\rput(30,13){\rnode{a2}{$Y(Fa,Fb)$}}  
\rput(31,10){\rotatebox{90}{$=$}}
\rput(30,7){\rnode{a3}{$Y(Ga,Gb)$}}  

\ncarc[arcangle=45]{->}{a1}{a2}\naput[npos=0.6]{{\scriptsize $F$}}
\ncarc[arcangle=-45]{->}{a1}{a3}\nbput[npos=0.6]{{\scriptsize $G$}}

{
\rput[c](17,8.5){\psset{unit=1mm,doubleline=true,arrowinset=0.6,arrowlength=0.5,arrowsize=0.5pt 2.1,nodesep=0pt,labelsep=2pt}
\pcline{->}(0,3)(0,0) \nbput{{\scriptsize $\alpha_{ab}$}}}}


\endpspicture
\]
satisfying the following axioms. Note that the following are all diagrams in $K$. We write $(a,b)$ for the hom-object of the appropriate $K$-category, and omit $\times$ signs. Here $m$ and $m'$ represent composition in the appropriate $K$-categories.

\begin{itemize}
 \item Composition:

\[\begin{small}
\psset{unit=0.09cm,labelsep=2pt,nodesep=2pt}
\pspicture(0,-10)(120,35)

%

\rput(0,30){\rnode{a1}{$(b,c)(a,b)$}}  
\rput(30,33){\rnode{a2}{\makebox[4em][l]{$(Fb,Fc)(Fa,Fb)$}}}  
\rput(38,30){\rotatebox{90}{$=$}}
\rput(30,27){\rnode{a3}{\makebox[4em][l]{$(Gb,Gc)(Ga,Gb)$}}}  

\rput(0,3){\rnode{b1}{$(a,c)$}}  
\rput(30,3){\rnode{b2}{\makebox[3em][l]{$(Ga,Gc)$}}}

\ncarc[arcangle=45]{->}{a1}{a2}\naput[npos=0.56]{{\scriptsize $FF$}}
\ncarc[arcangle=-45]{->}{a1}{a3}\nbput[npos=0.56]{{\scriptsize $GG$}}

\ncline{->}{a1}{b1} \nbput{{\scriptsize $m$}}
\ncline{->}{a3}{b2} \naput{{\scriptsize $m'$}}

\ncarc[arcangle=-45]{->}{b1}{b2}\nbput[npos=0.5]{{\scriptsize $G$}}

{
\rput[c](17,28.5){\psset{unit=1mm,doubleline=true,arrowinset=0.6,arrowlength=0.5,arrowsize=0.5pt 2.1,nodesep=0pt,labelsep=2pt}
\pcline{->}(0,3)(0,0) \nbput{{\scriptsize $\alpha\alpha$}}}}

{
\rput[c](15,9){\psset{unit=1mm,doubleline=true,arrowinset=0.6,arrowlength=0.5,arrowsize=0.5pt 2.1,nodesep=0pt,labelsep=2pt}
\pcline{-}(-1.5,-1.5)(1.5,1.5) \nbput{{\scriptsize $$}}}}


\rput(53,15){$=$}

\rput(70,0){

%
%

\rput(0,30){\rnode{a1}{$(b,c)(a,b)$}}  
\rput(30,30){\rnode{a2}{\makebox[6em][l]{$(Fb,Fc)(Fa,Fb)$}}}  

\rput(0,3){\rnode{b1}{$(a,c)$}}  
\rput(30,6){\rnode{b2}{\makebox[3em][l]{$(Fa,Fc)$}}}  

\rput(32,3){\rotatebox{90}{$=$}}
\rput(30,0){\rnode{b3}{\makebox[3em][l]{$(Ga,Gc)$}}}  

\ncarc[arcangle=45]{->}{a1}{a2}\naput[npos=0.56]{{\scriptsize $FF$}}

\ncline{->}{a1}{b1} \nbput{{\scriptsize $m$}}
\ncline{->}{a2}{b2} \naput{{\scriptsize $m'$}}

\ncarc[arcangle=45]{->}{b1}{b2}\naput[npos=0.56]{{\scriptsize $F$}}
\ncarc[arcangle=-45]{->}{b1}{b3}\nbput[npos=0.56]{{\scriptsize $G$}}

{
\rput[c](15,21){\psset{unit=1mm,doubleline=true,arrowinset=0.6,arrowlength=0.5,arrowsize=0.5pt 2.1,nodesep=0pt,labelsep=2pt}
\pcline{-}(-1.5,-1.5)(1.5,1.5) \nbput{{\scriptsize $$}}}}

{
\rput[c](17,0.5){\psset{unit=1mm,doubleline=true,arrowinset=0.6,arrowlength=0.5,arrowsize=0.5pt 2.1,nodesep=0pt,labelsep=2pt}
\pcline{->}(0,3)(0,0) \nbput{{\scriptsize $\alpha$}}}}


}

\eps
\end{small}
\]

\item Unit:

\[\begin{small}
\psset{unit=0.13cm,labelsep=2pt,nodesep=2pt}
\pspicture(15,0)(75,20)


\rput(2,10){\rnode{a1}{$\1$}}
\rput(25,18){\rnode{a2}{$(a,a)$}}
\rput(25,2){\rnode{a3}{$(Fa,Fa)=(Ga,Ga)$}}

\ncline{->}{a1}{a2} \naput{{\scriptsize $I$}}
\ncline{->}{a1}{a3} \nbput{{\scriptsize $I'$}}

\ncarc[arcangle=45]{->}{a2}{a3}\naput[npos=0.5]{{\scriptsize $G$}}
\ncarc[arcangle=-45]{->}{a2}{a3}\nbput[npos=0.5]{{\scriptsize $F$}}

{
\rput[c](14,9){\psset{unit=1mm,doubleline=true,arrowinset=0.6,arrowlength=0.5,arrowsize=0.5pt 2.1,nodesep=0pt,labelsep=-2pt}
\pcline{-}(0,0)(3,3) \naput{{\scriptsize $$}}}}

{
\rput[c](24,10){\psset{unit=1mm,doubleline=true,arrowinset=0.6,arrowlength=0.5,arrowsize=0.5pt 2.1,nodesep=0pt,labelsep=2pt}
\pcline{->}(0,0)(4,0) \nbput{{\scriptsize $\alpha$}}}}

\rput(40,10){$=$}

\rput(45,0){


\rput(2,10){\rnode{a1}{$\1$}}
\rput(25,18){\rnode{a2}{$(a,a)$}}
\rput(25,2){\rnode{a3}{$(Fa,Fa)=(Ga,Ga)$}}

\ncline{->}{a1}{a2} \naput{{\scriptsize $I$}}
\ncline{->}{a1}{a3} \nbput{{\scriptsize $I'$}}

\ncarc[arcangle=45]{->}{a2}{a3}\naput[npos=0.5]{{\scriptsize $G$}}

{
\rput[c](17,9){\psset{unit=1mm,doubleline=true,arrowinset=0.6,arrowlength=0.5,arrowsize=0.5pt 2.1,nodesep=0pt,labelsep=-2pt}
\pcline{-}(0,0)(3,3) \naput{{\scriptsize $$}}}}


}

\endpspicture
\end{small}\]

\end{itemize}

Write \kcat\ for the 2-category of $K$-categories, $K$-functors, and $K$-icons.

\end{definition}

%
%
%
%
%

\begin{remark}

This is more usually written \kicon\ to distinguish it from the full 3-dimensional totality of $K$-categories, but we will write it as \kcat\ as there will be no ambiguity, and we want to emphasise the $K$-categories as they are our main objects of study.

\end{remark}

We now need to weaken all this by the action of an operad.  As we are working somewhat strictly we do not need to invoke any 2-categorical structure for the operad. So an operad in a 2-category $K$ is just an operad in the underlying 1-category $K_1$.

\begin{definition}

Let $K$ be a 2-category with products, and $P$ an operad in $K$.  We define a 2-category \kpcat\ as follows.

\begin{itemize}

\item 0-cells are $(K,P)$-categories i.e. $(K_1, P)$-categories

\item 1-cells are (strict) $(K,P)$-functors

\item 2-cells are $(K,P)$-icons.

\end{itemize}

Note that the underlying data for a $(K,P)$-icon is the same as for a $K$-icon, but the composition axiom is now parametrised by $P$ as follows (where $m$ and $m'$ now represent parametrised composition in the appropriate $(K,P)$-category): 

\[\begin{small}
\psset{unit=0.09cm,labelsep=2pt,nodesep=2pt}
\pspicture(0,-10)(120,95)

%

\rput(15,55){

\rput(0,30){\rnode{a1}{$P(k)(x_{k-1},x_k)\cdots(x_0,x_1)$}}  

\rput(60,0){
\rput(0,33){\rnode{a2}{\makebox[8em][l]{$P(k)(Fx_{k-1},Fx_k)\cdots(Fx_0,Fx_1)$}}}  
\rput(6,29.5){\rotatebox{90}{$=$}}
\rput(0,27){\rnode{a3}{\makebox[8em][l]{$P(k)(Gx_{k-1},Gx_k)\cdots(Gx_0,Gx_1)$}}}

\rput(0,3){\rnode{b2}{\makebox[3em][l]{$(Gx_0,Gx_k)$}}}

}

\rput(0,3){\rnode{b1}{$(x_0,x_k)$}}

\ncarc[arcangle=40,ncurvA=0.6,ncurvB=0.4]{->}{a1}{a2}\naput[npos=0.5]{{\scriptsize $1\times F^k$}}
\ncarc[arcangle=-40,ncurvA=0.6,ncurvB=0.4]{->}{a1}{a3}\nbput[npos=0.5]{{\scriptsize $1\times G^k$}}

\ncline{->}{a1}{b1} \nbput{{\scriptsize $m$}}
\ncline{->}{a3}{b2} \naput{{\scriptsize $m'$}}

\ncarc[arcangle=-40,ncurvA=0.6,ncurvB=0.6]{->}{b1}{b2}\nbput[npos=0.5]{{\scriptsize $G$}}

{
\rput[c](30,30){\psset{unit=1mm,doubleline=true,arrowinset=0.6,arrowlength=0.5,arrowsize=0.5pt 2.1,nodesep=0pt,labelsep=2pt}
\pcline{->}(0,2)(0,-2) \naput{{\scriptsize $1\times \alpha^k$}}}}

{
\rput[c](30,3){\psset{unit=1mm,doubleline=true,arrowinset=0.6,arrowlength=0.5,arrowsize=0.5pt 2.1,nodesep=0pt,labelsep=2pt}
\pcline{-}(-1.5,-1.5)(1.5,1.5) \nbput{{\scriptsize $$}}}}


\rput(83,15){$=$}

}


\rput(32,0){

%
%

\rput(0,30){\rnode{a1}{$P(k)(x_{k-1},x_k)\cdots(x_0,x_1)$}}  

\rput(0,3){\rnode{b1}{$(x_0,x_k)$}}

\rput(60,0){

\rput(0,30){\rnode{a2}{\makebox[13em][l]{$P(k)(Fx_{k-1},Fx_k)\cdots(Fx_0,Fx_1)$}}}

\rput(0,6){\rnode{b2}{\makebox[3em][l]{$(Fx_0,Fx_k)$}}}  

\rput(3,2.5){\rotatebox{90}{$=$}}
\rput(0,0){\rnode{b3}{\makebox[3em][l]{$(Gx_0,Gx_k)$}}}  

}

\ncarc[arcangle=40,ncurvA=0.6,ncurvB=0.4]{->}{a1}{a2}\naput[npos=0.5]{{\scriptsize $1\times F^k$}}

\ncline{->}{a1}{b1} \nbput{{\scriptsize $m$}}
\ncline{->}{a2}{b2} \naput{{\scriptsize $m'$}}

\ncarc[arcangle=40,ncurvA=0.6,ncurvB=0.4]{->}{b1}{b2}\naput[npos=0.5]{{\scriptsize $F$}}
\ncarc[arcangle=-40,ncurvA=0.6,ncurvB=0.4]{->}{b1}{b3}\nbput[npos=0.5]{{\scriptsize $G$}}

{
\rput[c](30,26){\psset{unit=1mm,doubleline=true,arrowinset=0.6,arrowlength=0.5,arrowsize=0.5pt 2.1,nodesep=0pt,labelsep=2pt}
\pcline{-}(-1.5,-1.5)(1.5,1.5) \nbput{{\scriptsize $$}}}}

{
\rput[c](30,0.5){\psset{unit=1mm,doubleline=true,arrowinset=0.6,arrowlength=0.5,arrowsize=0.5pt 2.1,nodesep=0pt,labelsep=2pt}
\pcline{->}(0,3)(0,0) \nbput{{\scriptsize $\alpha$}}}}


}

\eps
\end{small}
\]

Horizontal and vertical composition are built in the obvious way from horizontal and vertical composition of 2-cells in $K$, and it is straightforward to check that these composites satisfy the operadic composition condition.

The 2-category axioms for \kpcat\ are inherited from $K$.

\end{definition}

Note that from this point on, we will be working entirely 2-categorically so we will drop the double underline notation on 2-categories as there should be no ambiguity. We can now iterate all the constructions so we have 2-categories as follows:

\begin{itemize}

\item $\vone = \twocatt{Cat}$ (here the 2-cells are ordinary natural transformations)


\item $\vtwo = \twocatt{$\cV_1$-Cat}_{P_1}$ with iconic 2-cells

\item $\vthree = \twocatt{\vtwoptwocat}$ with iconic 2-cells

\end{itemize}

\subsection{Extending the \dl\ to the 2-category totalities}

We will now extend the 1-dimensional \dl\ to the 2-category totalities. Parts of this have to some extent been done already: Lack and Paoli \cite{lp1} study the \dl\ at the 2-monad level for bicategories, and Shulman \cite{shul2} studies it in some generality for both strict and weak algebras.  However we need a different combination of strictness and weakness here, as we need our monads to be for weak vertical composition but strict horizontal composition.  Bicategories are often treated as weak algebras for the strict 2-category monad, for example in \cite{shul2}, but we want all our algebras to be strict. That is, we express bicategories as strict algebras for a monad for bicategories, rather than as weak algebras for a monad for 2-categories.


%
%

As we already know the results are true for the underlying 1-monads on the underlying 1-categories, we just have to check the information at the 2-dimensional level. This is mostly a question of working out what needs to be checked; the calculations are then straightforward.  This could also be seen as a generalisation of \cite{shul2} to include operad actions, but we have decided to include the direct calculations, partly as they are not hard and partly as we found them illuminating.




\begin{proposition}[2-categorical version of Prop~\ref{prop1}]\label{propfctwocat}

Given any infinitely distributive 2-category $K$ and an operad $P$ in it, the 2-category \kpcato\ is 2-monadic over \kgpho\ via a 2-monad \fckp, whose underlying 1-monad is the one given in Proposition~\ref{prop1}. Furthermore, \kpcato\ is also infinitely distributive.

\end{proposition}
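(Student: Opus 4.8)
The plan is to build on Proposition~\ref{prop1}, which already furnishes the underlying $1$-monad $\fcvp$ on the $1$-category \cat{$K_1$-Gph} together with the $1$-categorical monadicity statement, and to lift each piece of data one dimension up. First I would extend the endofunctor $\fcvp$ to a $2$-functor on \kgpho\ by defining its action on icons; then I would check that the unit $\eta$ and multiplication $\mu$ are $2$-natural, so that \fckp\ is a $2$-monad whose underlying $1$-monad is the one of Proposition~\ref{prop1}. The monad axioms themselves need no new verification: they are equations of $2$-natural transformations, and such an equation holds as soon as it holds for the underlying $1$-natural transformations, which is part of Proposition~\ref{prop1}. Being $2$-monadic then amounts to the comparison $2$-functor from \kpcato\ to the Eilenberg--Moore $2$-category $\fckp\cat{-Alg}$ of strict algebras being an isomorphism; and since that comparison is already an isomorphism on $0$- and $1$-cells by Proposition~\ref{prop1}, everything reduces to comparing $2$-cells.

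For the action on $2$-cells, recall that the free $(K,P)$-category on a $K$-graph $X$ has the same objects as $X$ and hom-objects assembled, via coproducts and products in $K$, from the operad-parametrised strings $P(k)\times X(x_{k-1},x_k)\times\cdots\times X(x_0,x_1)$. A $2$-cell of \kgpho\ is a family of $2$-cells $\alpha_{ab}\: F_{ab}\Tra G_{ab}$ in $K$ between morphisms agreeing on objects, and I would define $\fcvp\alpha$ on each such string by the whiskered $2$-cell $1\times\alpha^{k}$ and then take the induced $2$-cell between coproducts. This definition already uses the assumption that $K$ is infinitely distributive at the $2$-dimensional level, so that $\times$ is a $2$-functor and coproducts are genuine $2$-colimits. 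Because objects are left untouched, the source and target again agree on objects; $2$-functoriality, and the $2$-naturality of $\eta$ and $\mu$, then follow componentwise from the corresponding properties of $\times$ and the structural maps in $K$.

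The substance of the proof is the remaining comparison of $2$-cells. An algebra $2$-cell of $\fckp\cat{-Alg}$ between algebra morphisms $f,g$ is a $2$-cell $\alpha$ of \kgpho\ satisfying the evident compatibility square with the two algebra actions. Unravelling that square against the free-algebra structure, where the algebra action is exactly operad-parametrised composition, is engineered to reproduce precisely the parametrised composition axiom and the unit axiom in the definition of $(K,P)$-icon --- note in particular that the whiskered factor $1\times\alpha^{k}$ appearing in the icon axiom is literally the value of $\fcvp\alpha$ on a length-$k$ string. The main obstacle I anticipate is exactly this bookkeeping: tracking the operad component $P(k)$ through the whiskering and the coproduct so that the algebra-$2$-cell condition coincides \emph{on the nose} with the icon axiom rather than merely up to reindexing. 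This is the one place where the operad decoration does genuine work beyond the plain enriched setting of \cite{shul2}.

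Finally, for infinite distributivity of \kpcato, Proposition~\ref{prop1} already gives the small coproducts and finite products together with their distributivity at the level of $0$- and $1$-cells. What remains is to see that these are $2$-(co)limits, i.e.\ that their universal properties extend to $2$-cells. Since every $2$-cell of \kpcato\ is assembled from $2$-cells of $K$, and $K$ is itself infinitely distributive, the required $2$-dimensional universal properties are inherited levelwise; I would treat the finite-product and small-coproduct cases separately, each a short check that a family of icons forming a cone or cocone factors uniquely through the constructed (co)limit.
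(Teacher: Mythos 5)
Your proposal is correct and takes essentially the same approach as the paper's proof: you define the action on iconic 2-cells pointwise via $1\times\alpha^{k}$, check the cylinder conditions for $\eta$ and $\mu$ componentwise, identify algebra 2-cells with $(K,P)$-icons, and inherit infinite distributivity levelwise from $K$. The paper's argument differs only in presentation (it writes out the cylinder diagrams explicitly, noting that $\eta$ acts on a different factor of the product than $\alpha$ and that $\mu$ acts pointwise), not in substance.
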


\begin{proof}
Write $T$ for \fckp. We need to extend the 1-monad as follows

\begin{enumerate}

\item make the underlying functor into a 2-functor, i.e., give its action on iconic 2-cells, and

\item make $\eta$ and $\mu$ into 2-transformations, i.e., check the cylinder conditions.

\end{enumerate}

For (1) we know the action of $T\: \kgpho \tra \kgpho$ on 0-cells and 1-cells. On 0-cells, given a $K$-graph $A$, $TA$ has the same objects, and
\[TA(a,a') = \coprod_{k, a=a_0, a_1, \ldots, a_k = a'} P(k) \times A(a_{k-1}, a_k) \times \cdots \times A(a_0, a_1)\]
That is, essentially, composable strings equipped with a reparametrising operation. The action of $T$ on 1-cells is pointwise: given a morphism $A \tmap{F} B$ of $K$-graphs, the morphism $TA \tmap{TF} TB$ has the same action on objects, and on homs


\[\ps(-10,-9)(10,9)

\rput[B](0,8){\Rnode{a}{$P(k) \times A(a_{k-1}, a_k) \times \cdots \times A(a_0, a_1)$}}  
\rput[B](0,-8){\Rnode{b}{$P(k) \times B(Fa_{k-1}, Fa_k) \times \cdots \times B(Fa_0, Fa_1)$}}  
\ncline{->}{a}{b}\naput{\scr $1 \times F^k$}

\eps\]

We can now define the action of $T$ on 2-cells to be pointwise as well.  So consider morphisms of $K$-graphs
\[\psset{labelsep=2pt,nodesep=2pt}
\pspicture(0,-2)(15,2)
\rput[B](0,0){\Rnode{l}{$X$}}
\rput[B](15,0){\Rnode{r}{$Y$}}
\psset{nodesep=3pt,arrows=->}
\ncline[offset=3pt]{l}{r}\naput{{\scriptsize $F$}}
\ncline[offset=-3pt]{l}{r}\nbput{{\scriptsize $G$}}
\endpspicture
\]
agreeing on objects, and a 2-cell $\alpha$ given by $\forall a, b$

\[
\psset{unit=0.09cm,labelsep=2pt,nodesep=2pt}
\pspicture(-14,-14)(14,14)

\rput(-20,0){\rnode{a1}{$X(a,b)$}}  
\rput(20,4){\rnode{a2}{$Y(Fa,Fb)$}}  
\rput(20,0){\rotatebox{90}{$=$}}
\rput(20,-4){\rnode{a3}{$Y(Ga, Gb)$}}  

\ncarc[arcangle=45]{->}{a1}{a2}\naput[npos=0.55]{{\scriptsize $F$}}
\ncarc[arcangle=-45]{->}{a1}{a3}\nbput[npos=0.55]{{\scriptsize $G$}}

{
\rput[c](0,0){\psset{unit=1mm,doubleline=true,arrowinset=0.6,arrowlength=0.5,arrowsize=0.5pt 2.1,nodesep=0pt,labelsep=2pt}
\pcline{->}(0,2.5)(0,-2.5) \nbput{{\scriptsize $\alpha_{ab}$}}}}

\endpspicture
\]

Then $TF$ and $TG$ also agree on objects and we define an icon $T\alpha : TF \Tra TG$ by the following (where in this diagram we omit the subscripts on the components of $\alpha$ so that $\alpha^k$ represents $\alpha_{x_{k-1}x_k}\cdots\alpha_{x_0x_1}$)

\[
\psset{unit=0.09cm,labelsep=2pt,nodesep=2pt}
\pspicture(-14,-18)(14,18)

\rput(-30,0){\rnode{a1}{\makebox[9em][r]{$P(k)(x_{k-1},x_k)(x_0,x_1)$}}}  
\rput(30,4){\rnode{a2}{\makebox[9em][l]{$P(k)(Fx_{k-1},Fx_k)(Fx_0,Fx_1)$}}}  
\rput(37,0){\rotatebox{90}{$=$}}
\rput(30,-4){\rnode{a3}{\makebox[9em][l]{$P(k)(Gx_{k-1},Gx_k)(Gx_0,Gx_1)$}}}  

\ncarc[arcangle=40]{->}{a1}{a2}\naput[npos=0.55]{{\scriptsize $TF=1\times F^k$}}
\ncarc[arcangle=-40]{->}{a1}{a3}\nbput[npos=0.55]{{\scriptsize $TG=1\times G^k$}}

{
\rput[c](-5,0){\psset{unit=1mm,doubleline=true,arrowinset=0.6,arrowlength=0.5,arrowsize=0.5pt 2.1,nodesep=0pt,labelsep=2pt}
\pcline{->}(0,2.5)(0,-2.5) \naput{{\scriptsize $1\times\alpha^k$}}}}

\endpspicture
\]

For (2) there is no extra data for $\eta$ and $\mu$, we just have to check the 2-dimensional cylinder conditions.  We have to check that the following diagram commutes (as a diagram in $K$):
\[\begin{small}
\psset{unit=0.09cm,labelsep=2pt,nodesep=2pt}
\pspicture(0,-10)(120,70)

%
%

\rput(20,40){

\rput(0,30){\rnode{a1}{$(a,b)$}}  
\rput(0,0){\rnode{a2}{$(Fa,Fb) = (Ga,Gb)$}}  

\rput(45,30){\rnode{b1}{$P(1)(a,b)$}}  
\rput(45,0){\rnode{b2}{$P(1)(Ga,Gb)$}}

\ncarc[arcangle=45]{->}{a1}{a2}\naput[npos=0.5]{{\scriptsize $G$}}
\ncarc[arcangle=-45]{->}{a1}{a2}\nbput[npos=0.5]{{\scriptsize $F$}}

\ncline{->}{a1}{b1} \naput{{\scriptsize $\eta$}}
\ncline{->}{a2}{b2} \nbput{{\scriptsize $\eta$}}

\ncarc[arcangle=45]{->}{b1}{b2}\naput[npos=0.5]{{\scriptsize $1\times G$}}

{
\rput[c](0,15){\psset{unit=1mm,doubleline=true,arrowinset=0.6,arrowlength=0.5,arrowsize=0.5pt 2.1,nodesep=0pt,labelsep=2pt}
\pcline{->}(-1.5,0)(1.5,0) \nbput{{\scriptsize $\alpha$}}}}

{
\rput[c](30,15){\psset{unit=1mm,doubleline=true,arrowinset=0.6,arrowlength=0.5,arrowsize=0.5pt 2.1,nodesep=0pt,labelsep=2pt}
\pcline{-}(-1.5,-1.5)(1.5,1.5) \nbput{{\scriptsize $$}}}}


\rput(73,15){$=$}

}


\rput(40,0){

%
%

\rput(0,30){\rnode{a1}{$(a,b)$}}  
\rput(0,0){\rnode{a2}{$(Fa,Fb)$}}  

\rput(45,30){\rnode{b1}{$P(1)(a,b)$}}  
\rput(45,0){\rnode{b2}{$P(1)(Fa,Fb)=P(1)(Ga,Gb)$}}

\ncarc[arcangle=-45]{->}{a1}{a2}\nbput[npos=0.5]{{\scriptsize $F$}}

\ncline{->}{a1}{b1} \naput{{\scriptsize $\eta$}}
\ncline{->}{a2}{b2} \nbput{{\scriptsize $\eta$}}

\ncarc[arcangle=-45]{->}{b1}{b2}\nbput[npos=0.5]{{\scriptsize $1\times F$}}
\ncarc[arcangle=45]{->}{b1}{b2}\naput[npos=0.5]{{\scriptsize $1\times G$}}

{
\rput[c](45,15){\psset{unit=1mm,doubleline=true,arrowinset=0.6,arrowlength=0.5,arrowsize=0.5pt 2.1,nodesep=0pt,labelsep=2pt}
\pcline{->}(-1.5,0)(1.5,0) \nbput{{\scriptsize $1 \times \alpha$}}}}

{
\rput[c](20,15){\psset{unit=1mm,doubleline=true,arrowinset=0.6,arrowlength=0.5,arrowsize=0.5pt 2.1,nodesep=0pt,labelsep=2pt}
\pcline{-}(-1.5,-1.5)(1.5,1.5) \nbput{{\scriptsize $$}}}}


}

\eps
\end{small}
\]

To see that this commutes, note that $\eta$ acts as the identity on objects, and on hom objects the action is via the unit of the operad $P$ as follows:
\[(a,b) \hs{0.3}\cong \hs{0.3} 1 \times A(a,b) \ltmap{\mbox{\sf\scr unit} \times 1} P(1) \times (a,b) \hs{0.2} \subset TA.\]
So the $\eta$ and $\alpha$ parts of the diagram act on different parts of the product, and the above cylinder diagram does indeed commute.

We now turn our attention to $\mu$. We know that $\mu$ acts by concatenation together with composition of the operad $P$. We need to check the following diagram:

\[\begin{small}
\psset{unit=0.09cm,labelsep=2pt,nodesep=2pt}
\pspicture(0,-10)(120,70)

%
%

\rput(20,40){

\rput(0,30){\rnode{a1}{$T^2(a,b)$}}  
\rput(0,0){\rnode{a2}{$T^2(Fa,Fb) = T^2(Ga,Gb)$}}  

\rput(45,30){\rnode{b1}{$T(a,b)$}}  
\rput(45,0){\rnode{b2}{$T(Ga,Gb)$}}

\ncarc[arcangle=45]{->}{a1}{a2}\naput[npos=0.5]{{\scriptsize $T^2G$}}
\ncarc[arcangle=-45]{->}{a1}{a2}\nbput[npos=0.5]{{\scriptsize $T^2F$}}

\ncline{->}{a1}{b1} \naput{{\scriptsize $\mu$}}
\ncline{->}{a2}{b2} \nbput{{\scriptsize $\mu$}}

\ncarc[arcangle=45]{->}{b1}{b2}\naput[npos=0.5]{{\scriptsize $TG$}}

{
\rput[c](0,15){\psset{unit=1mm,doubleline=true,arrowinset=0.6,arrowlength=0.5,arrowsize=0.5pt 2.1,nodesep=0pt,labelsep=2pt}
\pcline{->}(-1.5,0)(1.5,0) \nbput{{\scriptsize $T^2\alpha$}}}}

{
\rput[c](30,15){\psset{unit=1mm,doubleline=true,arrowinset=0.6,arrowlength=0.5,arrowsize=0.5pt 2.1,nodesep=0pt,labelsep=2pt}
\pcline{-}(-1.5,-1.5)(1.5,1.5) \nbput{{\scriptsize $$}}}}


\rput(73,15){$=$}

}


\rput(40,0){

%
%

\rput(0,30){\rnode{a1}{$T^2(a,b)$}}  
\rput(0,0){\rnode{a2}{$T^2(Fa,Fb)$}}  

\rput(45,30){\rnode{b1}{$T(a,b)$}}  
\rput(45,0){\rnode{b2}{$T(Fa,Fb)=T(Ga,Gb)$}}

\ncarc[arcangle=-45]{->}{a1}{a2}\nbput[npos=0.5]{{\scriptsize $T^2 F$}}

\ncline{->}{a1}{b1} \naput{{\scriptsize $\mu$}}
\ncline{->}{a2}{b2} \nbput{{\scriptsize $\mu$}}

\ncarc[arcangle=-45]{->}{b1}{b2}\nbput[npos=0.5]{{\scriptsize $TF$}}
\ncarc[arcangle=45]{->}{b1}{b2}\naput[npos=0.5]{{\scriptsize $TG$}}

{
\rput[c](45,15){\psset{unit=1mm,doubleline=true,arrowinset=0.6,arrowlength=0.5,arrowsize=0.5pt 2.1,nodesep=0pt,labelsep=2pt}
\pcline{->}(-1.5,0)(1.5,0) \nbput{{\scriptsize $T\alpha$}}}}

{
\rput[c](20,15){\psset{unit=1mm,doubleline=true,arrowinset=0.6,arrowlength=0.5,arrowsize=0.5pt 2.1,nodesep=0pt,labelsep=2pt}
\pcline{-}(-1.5,-1.5)(1.5,1.5) \nbput{{\scriptsize $$}}}}


}

\eps
\end{small}
\]
Since $T^2\alpha$ and $T\alpha$ both act pointwise, this commutes.

So we have a 2-monad \fckp\ as claimed.

Next we need to establish an equivalence of 2-categories
\[\twocatt{$T$-Alg} \catequiv \twocatt{$K\cat{-Cat}_P$}.\]
In fact, as in the 1-dimensional case, this is an isomorphism because $(K,P)$-categories are precisely algebras for the monad $T$.  (We could have defined the monad $T$ first, and then defined $(K,P)$-categories to be $T$-algebras.) The 1-cell correspondence works as in the 1-dimensional case, so we just need to examine iconic 2-cells.  A 2-cell $\alpha$ of $T$-algebras is a 2-cell of $K$ satisfying the additional cylinder condition:

\[
\psset{unit=0.06cm,labelsep=2pt,nodesep=2pt}
\pspicture(0,-9)(80,38)

%
%

\rput(0,0){

\rput(0,28){\rnode{a1}{$TA$}}  
\rput(30,28){\rnode{a2}{$TB$}}  

\rput(0,3){\rnode{b1}{$A$}}  
\rput(30,3){\rnode{b2}{$B$}}

\ncarc[arcangle=45]{->}{a1}{a2}\naput[npos=0.5]{{\scriptsize $Tf$}}
\ncarc[arcangle=-45]{->}{a1}{a2}\nbput[npos=0.5]{{\scriptsize $Tg$}}

\ncline{->}{a1}{b1} \nbput{{\scriptsize $a$}}
\ncline{->}{a2}{b2} \naput{{\scriptsize $b$}}

\ncarc[arcangle=-45]{->}{b1}{b2}\nbput[npos=0.5]{{\scriptsize $g$}}

{
\rput[c](16,28){\psset{unit=1mm,doubleline=true,arrowinset=0.6,arrowlength=0.5,arrowsize=0.5pt 2.1,nodesep=0pt,labelsep=2pt}
\pcline{->}(0,1.5)(0,-1.5) \nbput{{\scriptsize $T\alpha$}}}}

{
\rput[c](15,6.5){\psset{unit=1mm,doubleline=true,arrowinset=0.6,arrowlength=0.5,arrowsize=0.5pt 2.1,nodesep=0pt,labelsep=1pt}
\pcline{-}(1,1)(-1,-1) 
}
}


\rput(44,20){$=$}

}

\rput(60,0){

\rput(0,28){\rnode{a1}{$TA$}}  
\rput(30,28){\rnode{a2}{$TB$}}  

\rput(0,3){\rnode{b1}{$A$}}  
\rput(30,3){\rnode{b2}{$B$}}

\ncarc[arcangle=45]{->}{a1}{a2}\naput[npos=0.5]{{\scriptsize $Tf$}}

\ncline{->}{a1}{b1} \nbput{{\scriptsize $a$}}
\ncline{->}{a2}{b2} \naput{{\scriptsize $b$}}

%
%

\ncarc[arcangle=45]{->}{b1}{b2}\naput[npos=0.5]{{\scriptsize $f$}}
\ncarc[arcangle=-45]{->}{b1}{b2}\nbput[npos=0.5]{{\scriptsize $g$}}

{
\rput[c](15,23){\psset{unit=1mm,doubleline=true,arrowinset=0.6,arrowlength=0.5,arrowsize=0.5pt 2.1,nodesep=0pt,labelsep=1pt}
\pcline{-}(1,1)(-1,-1) 
}
}

{
\rput[c](15,3){\psset{unit=1mm,doubleline=true,arrowinset=0.6,arrowlength=0.5,arrowsize=0.5pt 2.1,nodesep=0pt,labelsep=2pt}
\pcline{->}(0,1.5)(0,-1.5) \nbput{{\scriptsize $\alpha$}}}}

}

\endpspicture
\]

\noi We see that this condition is exactly the cylinder condition for iconic 2-cells of \twocatt{$K\cat{-Cat}_P$}.

Finally note that infinite distributivity follows as for the 1-dimensional version, that is, since products and coproducts in \twocatt{$K\cat{-Cat}_P$} are constructed from those in $K$.  
\end{proof}

We now use this general free $(K,P)$-category 2-monad to construct the two 2-monads which will be composed via a distributive law.  In the 1-dimensional version, the monad for vertical composition comes from a 2-functor 

\[\pspicture(-20,31)(20,42)

\rput(-12,40){\rnode{a1}{$\twocatt{Cat}$}}  
\rput(12,40){\rnode{a2}{$\twocatt{Cat}$}}  

\rput(-12,33){\rnode{b1}{$\cV$}}  
\rput(12,33){\rnode{b2}{$\vgph$}}  

%
%
\ncline[nodesep=6pt]{->}{a1}{a2}
\ncline[nodesep=12pt]{|->}{b1}{b2}
%



\eps\]

\noi We now need to use an analogous 2-functor 
\[\pspicture(-20,31)(20,42)

\rput(-12,40){\rnode{a1}{$\twocatt{2-Cat}$}}  
\rput(12,40){\rnode{a2}{$\twocatt{2-Cat}$}}  

\rput(-12,33){\rnode{b1}{$K$}}  
\rput(12,33){\rnode{b2}{\twocatt{$K$-Gph}}}  

%
%
\ncline[nodesep=6pt]{->}{a1}{a2}
\ncline[nodesep=12pt]{|->}{b1}{b2}
%



\eps\]
As in the 1-dimensional version, a 2-functor \[F\:K \tra K'\] 
is sent to the 2-functor
\[F_*\:\twocatt{$K$-Gph} \tra \twocatt{$K'$-Gph}\] 
which leaves objects unchanged, and applies $F$ to the hom 2-categories; the action on icons works similarly. The full definition of this 2-functor is given very succinctly in \cite[Section 3]{shul2} where it is called $\cG$.

\begin{proposition}\label{proptwoalg}
Let $T$ be a 2-monad on $K$ with 2-category of algebras $K^T$.  Then $\cG T$ is a 2-monad on $\cG K$ and $(\cG K)^{(\cG T)} \cong \cG(K^T)$, that is
\[\twocatt{$T_\ast\cat{-Alg}$} \cong \twocatt{$(T\cat{-Alg})\cat{-Gph}$}\]

\end{proposition}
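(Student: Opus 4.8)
The plan is to prove the two assertions in turn, treating the first as a formal consequence of 2-functoriality and the second by a hom-wise comparison of algebra structures.

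That $\cG T$ is a 2-monad is immediate once one recalls that a 2-monad on $K$ is precisely a monad on the object $K$ inside the 2-category of 2-categories: a 2-functor $T\: K \tra K$ together with 2-natural transformations $\eta$ and $\mu$ making the standard monad diagrams commute. Since $\cG$ is itself a 2-functor, it preserves strict composition of 2-functors and whiskering of 2-natural transformations, and hence sends this monad to a monad $(\cG T, \cG \eta, \cG \mu) = (T_\ast, \eta_\ast, \mu_\ast)$ on $\cG K$, which is precisely the 2-category \kgpho\ of $K$-graphs. No calculation is needed beyond observing that $\cG$ carries each monad diagram to the corresponding one for $T_\ast$.

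For the isomorphism $(\cG K)^{\cG T} \cong \cG(K^T)$ I would build the comparison directly, exploiting that $T_\ast$, $\eta_\ast$ and $\mu_\ast$ all act \emph{hom-wise}, fixing the object-set and applying $T$, $\eta$ and $\mu$ to each hom-object. On $0$-cells, a $T_\ast$-algebra $(A,a)$ has its structure map forced to be the identity on objects by the unit law $a \circ \eta_\ast = 1$, so $a$ amounts to a family of $1$-cells $a_{ab}\: T\big(A(a,b)\big) \tra A(a,b)$ in $K$; since $\mu_\ast$ and $\eta_\ast$ are hom-wise, the algebra axioms hold if and only if each $\big(A(a,b), a_{ab}\big)$ is a $T$-algebra, which is exactly the data of a $(K^T)$-graph. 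On $1$-cells, a strict map of $T_\ast$-algebras is a $K$-graph morphism whose structure square commutes, and hom-wise this says each component is a $T$-algebra morphism, i.e.\ it is a morphism of $(K^T)$-graphs. On $2$-cells, an algebra icon is a $K$-graph icon satisfying the cylinder condition, and this decouples hom-wise into the assertion that each $\alpha_{ab}$ is a $T$-algebra $2$-cell, that is, an icon of $\cG(K^T)$ --- exactly the cylinder/icon identification already carried out in the proof of Proposition~\ref{propfctwocat}. As all composites and identities in both 2-categories are inherited hom-wise from $K$, these three bijections respect composition and identities and so assemble into an isomorphism of 2-categories.

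I expect the only delicate point to be bookkeeping rather than genuine mathematics: pinning down that the structure map of a $T_\ast$-algebra is forced to be identity-on-objects, and that the global $T_\ast$-algebra axioms are equivalent, pair-by-pair, to the pointwise $T$-algebra axioms. The analogous step for $2$-cells, which would otherwise be the fiddliest, can simply be quoted from Proposition~\ref{propfctwocat}. Since throughout the correspondence merely relabels the same underlying $K$-data rather than making any choices, the comparison is a strict isomorphism --- matching the stated $\cong$ --- and not merely an equivalence.
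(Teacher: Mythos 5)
Your proposal is correct and takes essentially the same route as the paper's proof: you observe that the structure map of a $T_\ast$-algebra is forced to be the identity on objects (the paper appeals to the algebra axioms, you pin it to the unit law), and then the 0-, 1-, and 2-cell data decompose hom-wise into the data of graphs, graph morphisms, and iconic 2-cells enriched in \twocatt{$T$-Alg}, exactly as the paper does. One minor caveat: Proposition~\ref{propfctwocat} concerns the particular 2-monad \fckp\ rather than a general $T_\ast$, so its cylinder analysis cannot literally be ``quoted'' for this step; but since you also give the direct hom-wise argument for 2-cells, this is a citation slip rather than a gap.
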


\begin{proof}
A $T_\ast$-algebra is a $K$-graph $A$ together with an algebra action $T_\ast A \tra A$ satisfying the usual algebra axioms. As $T_\ast$ leaves the objects of $A$ unchanged, the algebra axioms ensure that the algebra action must be the identity on objects, together with a $T$-algebra action on homs.  That is, we have a graph enriched in \twocatt{$T$-Alg}.

A map of $T_\ast$-algebras is a 1-cell $A \tmap{f} B$ of the underlying $K$-graphs such that the usual square commutes. As the algebra actions are the identity on objects, this gives us any function $f$ on the underlying objects, and a $T$-algebra map at the level of homs. That is, we have a map of graphs enriched in \twocatt{$T$-Alg}.

A 2-cell
\[
\psset{unit=0.1cm,labelsep=2pt,nodesep=2pt}
\pspicture(20,20)

\rput(0,10){\rnode{a1}{$A$}}  
\rput(20,10){\rnode{a2}{$B$}}  

\ncarc[arcangle=45]{->}{a1}{a2}\naput{{\scriptsize $f$}}
\ncarc[arcangle=-45]{->}{a1}{a2}\nbput{{\scriptsize $g$}}

\pcline[linewidth=0.6pt,doubleline=true,arrowinset=0.6,arrowlength=0.8,arrowsize=0.5pt 2.1]{->}(10,13)(10,7)  \naput{{\scriptsize $\alpha$}}

\endpspicture
\]
between maps of $T_\ast$-algebras is a 2-cell of \twocatt{$K$-Gph} satisfying the cylinder condition. Thus it is an icon, so we start with the condition that $f$ and $g$ must agree on objects; the rest of the data and the axioms amount to a 2-cell between $T$-algebras at the level of homs. That is, we have an iconic 2-cell of \twocatt{$(T\cat{-Alg})\cat{-Gph}$}.\end{proof}


The 2-monads for horizontal and vertical composition are then straightforward 2-dimensional extensions of Definitions~\ref{verticalone} and \ref{verticaltwo}.

We start with

\begin{itemize}

\item $\vone = \twocatt{Cat}$ (here the 2-cells are ordinary natural transformations), and $P_1$ is an operad in $\cV_1$

\item $\vtwo = \twocatt{\voneponecat}$ with iconic 2-cells, and $P_2$ is an operad in $\vtwo$

\item $\vthree = \twocatt{\vtwoptwocat} = \twocatt{$\cV_1\cat{-Cat}_{P_1}\cat{-Cat}_{P_2}$}$ with iconic 2-cells.

\end{itemize}

\noi The following diagram of categories and monadic forgetful functors becomes a diagram of 2-categories and 2-monadic forgetful 2-functors.

\[\ps(-20,0)(20,20)

\rput(0,0){\rnode{b}{$\cat{$\cV_1$-Gph-Gph}$}}  
\rput(-20,16){\rnode{tl}{\cat{$\cat{$\cV_1$-Cat}_{P_1}$-Gph}}}  
  \rput(20,16){\rnode{tr}{$\cat{$\cV_1$-Gph-Cat}_{\UP_2}$}}

\ncline{->}{tl}{b} \naput{{\scriptsize $$}}
\ncline{->}{tr}{b} \naput{{\scriptsize $$}}

\eps\]

\noi As in the 1-categorical case, we define the following 2-monads on \cat{$\cV_1$-Gph-Gph}

\begin{itemize}

\item For vertical composition: $S = (\fc_{P_1})_\ast$.
\item For horizontal composition: $T = \fc_{\UP_2}$.

\end{itemize}

Finally, we check that the \dl\ from the 1-categorical case
\[ST \Tmap{\lambda} TS\]
extends to a 2-dimensional \dl. We have to check that $\lambda$ becomes a 2-transformation, that is, that the cylinder diagram commutes (as a diagram in \twocatt{$\cV_1$-Gph-Gph}):

\[\begin{small}
\psset{unit=0.09cm,labelsep=2pt,nodesep=2pt}
\pspicture(0,-10)(100,40)

%
%

\rput(0,0){

\rput(0,30){\rnode{a1}{$STX$}}  
\rput(0,0){\rnode{a2}{$STY$}}  

\rput(30,0){
\rput(0,30){\rnode{b1}{$TSX$}}  
\rput(0,0){\rnode{b2}{$TSY$}}  

{
\rput[c](-7,15){\psset{unit=1mm,doubleline=true,arrowinset=0.6,arrowlength=0.5,arrowsize=0.5pt 2.1,nodesep=0pt,labelsep=2pt}
\pcline{-}(-1.5,-1.5)(1.5,1.5) \nbput{{\scriptsize $$}}}}

}

\ncarc[arcangle=45]{->}{a1}{a2}\naput[npos=0.5]{{\scriptsize $STG$}}
\ncarc[arcangle=-45]{->}{a1}{a2}\nbput[npos=0.5]{{\scriptsize $STF$}}

\ncline{->}{a1}{b1} \naput{{\scriptsize $\lambda_X$}}
\ncline{->}{a2}{b2} \nbput{{\scriptsize $\lambda_Y$}}

\ncarc[arcangle=45]{->}{b1}{b2}\naput[npos=0.5]{{\scriptsize $TSG$}}

{
\rput[c](0,15){\psset{unit=1mm,doubleline=true,arrowinset=0.6,arrowlength=0.5,arrowsize=0.5pt 2.1,nodesep=0pt,labelsep=2pt}
\pcline{->}(-1.5,0)(1.5,0) \nbput{{\scriptsize $ST\alpha$}}}}


\rput(50,15){$=$}

}


\rput(70,0){

%
%

\rput(0,30){\rnode{a1}{$STX$}}  
\rput(0,0){\rnode{a2}{$STY$}}  

\rput(30,0){
\rput(0,30){\rnode{b1}{$TSX$}}  
\rput(0,0){\rnode{b2}{$TSY$}}  

{
\rput[c](0,15){\psset{unit=1mm,doubleline=true,arrowinset=0.6,arrowlength=0.5,arrowsize=0.5pt 2.1,nodesep=0pt,labelsep=2pt}
\pcline{->}(-1.5,0)(1.5,0) \nbput{{\scriptsize $TS\alpha$}}}}

}

\ncarc[arcangle=-45]{->}{a1}{a2}\nbput[npos=0.5]{{\scriptsize $STF$}}

\ncline{->}{a1}{b1} \naput{{\scriptsize $\lambda_X$}}
\ncline{->}{a2}{b2} \nbput{{\scriptsize $\lambda_Y$}}

\ncarc[arcangle=-45]{->}{b1}{b2}\nbput[npos=0.5]{{\scriptsize $TSF$}}
\ncarc[arcangle=45]{->}{b1}{b2}\naput[npos=0.5]{{\scriptsize $TSG$}}

{
\rput[c](7,15){\psset{unit=1mm,doubleline=true,arrowinset=0.6,arrowlength=0.5,arrowsize=0.5pt 2.1,nodesep=0pt,labelsep=2pt}
\pcline{-}(-1.5,-1.5)(1.5,1.5) \nbput{{\scriptsize $$}}}}


}

\eps
\end{small}
\]

\noi This is true because all the monads act pointwise, leaving the operad action unchanged.

\subsection{Example: \bicatscats}\label{examplebicatscat}

For this work we use the following special case:

\begin{itemize}

\item We set $\vone = \twocatt{Cat}$ and $P_1 =$ the operad for bicategories (the free contractible operad on one binary and one nullary operation).

\item Then by definition $\vtwo := \twocatt{\voneponecat} = \twocatt{\bicats}$.

\item We set $P_2 = 1$ (the terminal operad).

\item Then by definition $\vthree := \twocatt{$(\bicats, 1)$-Cat} = \twocatt{\bicats-Cat}$.

\end{itemize}

\noi We then have the following 2-monads on \twocatt{\catgphgph}:

\begin{itemize}

\item for (strict) horizontal composition: $H = \fc_{\UP_2}$,

\item for (weak) vertical composition: $V = (\fc_{P_1})_\ast$,

\end{itemize}

%
%
%
%
%

\noi and  a 2-dimensional \dl\
\[\VH \Tra HV\]
with
\[\twocatt{$HV$-Alg} \cong \twocatt{\bicats-Cat}\]

\noi showing that the 2-category of \bicatscats\ with iconic 2-cells can be constructed as the 2-category of algebras for the composite 2-monad $HV$. This is the construction we will use to define weak maps between \bicatscats\ in the next section.


\section{Algebras via distributive laws}
\label{two}



In this section we will give more details about how to use 2-dimensional distributive laws to study weak maps between $TS$-algebras in general.  This will enable us to define weak maps of \bicatscats\ via weak maps of algebras for the individual 2-monads. These are only weak enough in the doubly-degenerate case because, in effect, they force (or assume) strict functoriality on 1-cells. For general tricategories this is too strict, but for doubly-degenerate ones there is only one 0-cell and only one 1-cell so weak functors are trivially strictly functorial on 1-cells.

As our motivating example is not a fully weak 2-dimensional situation we will simplify this situation by keeping it as strict as that example:
\begin{itemize}\setlength{\itemsep}{2pt}

\item strict 2-categories, strict 2-functors, strict transformations
\item strict 2-monads and strict \dls\ between them
\item strict algebras
\item weak maps of algebras
\item strict transformations of algebras

\end{itemize}

For this reason we will not just use the analogous results about pseudo distributive laws from \chp\ giving a biequivalence of bicategories
\[\cat{$TS$-Alg} \catequiv \cat{$T'$-Alg}.\]
We have a much better behaved situation and we can characterise weak maps of $TS$-algebras precisely with a direct approach.  Our goal is to achieve a convenient explicit description of weak maps of doubly-degenerate \bicatscats.  We follow Kelly and Street \cite{ks1} and consider strict 2-monads (which they call doctrines), strict algebras, and weak maps of algebras (although their focus is more on the lax case).  Keeping the 2-monads and the algebras strict makes for a much simpler theory, without us losing any of the expressivity we need.

We begin by giving a closer examination of algebras for a composite monad $TS$.



\subsection{Algebras via distributive laws}

The basic theorem about a distributive law of $S$ over $T$ (Theorem~\ref{barrwells}) shows that algebras for the composite monad $TS$ are the same as algebras for the lifted monad $T'$.  However, in practice we often express them as a $T$-algebra structure and an $S$-algebra structure satisfying an interaction axiom coming from the distibutive law.  This is one possible answer to the question of why a distributive law is called a ``law'' when it is a piece of extra structure on the monads (a natural transformation): it is a law at the level of algebra structures. 

The following easy corollary makes this precise.  

\begin{cor}\cite[Section 2]{bec1}\label{distpair} Let $S$ and $T$ be monads on a category $\cC$, and $\lambda: ST \Tra TS$ a \dl. Then a $TS$-algebra is equivalently a $T$-algebra 
\pspicture(-3,4)(5,13)
\rput(0,10){\rnode{a}{$TA$}}
\rput(0,0){\rnode{b}{$A$}}
\ncline{->}{a}{b}\naput[npos=0.4]{$t$}
\endpspicture
and $S$-algebra
\pspicture(-3,4)(5,13)
\rput(0,10){\rnode{a}{$SA$}}
\rput(0,0){\rnode{b}{$A$}}
\ncline{->}{a}{b}\naput[npos=0.4]{$s$}
\endpspicture
such that the following diagram commutes:
\[\psset{unit=0.065cm,labelsep=0pt,nodesep=3pt}
\pspicture(20,44)


\rput(0,40){\rnode{a1}{$STA$}} 
\rput(26,40){\rnode{a2}{$SA$}} 

\rput(0,20){\rnode{b1}{$TSA$}}   

\rput(0,0){\rnode{c1}{$TA$}}   
\rput(26,0){\rnode{c2}{$A$}}  

\psset{nodesep=3pt,labelsep=2pt,arrows=->}
\ncline{a1}{a2}\naput{{\scriptsize $St$}} 
\ncline{c1}{c2}\nbput{{\scriptsize $t$}} 

\ncline{a1}{b1}\nbput{{\scriptsize $\lambda_A$}} 
\ncline{b1}{c1}\nbput{{\scriptsize $Ts$}} 

\ncline{a2}{c2}\naput{{\scriptsize $s$}} 

\endpspicture\]
\end{cor}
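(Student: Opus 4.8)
The plan is to read this off directly from Theorem~\ref{barrwells}. That theorem already tells us that a $TS$-algebra is the same thing as an algebra for the lifted monad $T'$ on $S\cat{-Alg}$, so the entire content of the corollary is to unravel what a $T'$-algebra is and then to recognise the interaction square among the resulting axioms.

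First I would recall the explicit description of the lift $T'$ furnished by Theorem~\ref{barrwells}. On an $S$-algebra $(A,s)$ the lift $T'(A,s)$ has carrier $TA$, equipped with the $S$-algebra structure $Ts \circ \lambda_A \colon STA \to TA$ (first transport past $T$ via $\lambda_A \colon STA \to TSA$, then act by $Ts$); moreover $\eta^T$ and $\mu^T$ become $S$-algebra morphisms, which is exactly what the \dl\ axioms buy us. I would take all of this as given, since it is precisely the statement of the cited theorem.

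Next I would spell out a $T'$-algebra. By definition it is an object $(A,s)$ of $S\cat{-Alg}$ together with a morphism $t \colon T'(A,s) \to (A,s)$ of $S$-algebras satisfying the two algebra axioms. Its underlying arrow is a map $t \colon TA \to A$, and the $T'$-algebra axioms $t \circ \eta^T_A = \id_A$ and $t \circ \mu^T_A = t \circ Tt$ are verbatim the axioms making $(A,t)$ a $T$-algebra. Hence a $T'$-algebra amounts to an $S$-algebra $(A,s)$ and a $T$-algebra $(A,t)$ on the same carrier, subject to the single surviving requirement that $t$ be a morphism of $S$-algebras.

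Finally I would observe that the condition ``$t$ is an $S$-algebra morphism $T'(A,s) \to (A,s)$'' unwinds, using the $S$-structure $Ts\circ\lambda_A$ on $TA$ recorded above, to the equation
$$s \circ St = t \circ Ts \circ \lambda_A,$$
which is exactly the square displayed in the statement once $Ts \circ \lambda_A$ is drawn as the two left-hand edges $STA \to TSA \to TA$. Chaining the three steps yields the asserted equivalence. The only point demanding care — and the natural place to slip — is the handedness of $\lambda$: one must keep $\lambda \colon ST \Tra TS$ and the induced action $Ts\circ\lambda_A$ (in that order) straight, so that the $S$-morphism condition lands as the square drawn rather than its transpose. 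A more hands-on alternative, bypassing the lift, would be to set up the bijection explicitly by $t = \theta \circ T\eta^S_A$, $s = \theta \circ \eta^T_{SA}$, and $\theta = t \circ Ts$, and to verify the axioms by diagram chasing; but routing through Theorem~\ref{barrwells} as above keeps the calculation to a minimum.
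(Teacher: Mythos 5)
Your proposal is correct, and it takes a genuinely different route from the paper's own proof. The paper proves Corollary~\ref{distpair} directly (in the Appendix, via string diagrams): it writes down the explicit assignments $\theta \mapsto (s,t) = (\theta \circ \eta^T_{SA},\, \theta \circ T\eta^S_A)$ and $(s,t) \mapsto \theta = t \circ Ts$, and then verifies by diagrammatic computation that the interaction axiom holds in one direction and that the $TS$-algebra axioms follow from it in the other (this is exactly the ``hands-on alternative'' you mention at the end). You instead factor the equivalence through Theorem~\ref{barrwells}: $TS$-algebras are $T'$-algebras for the lifted monad, and a $T'$-algebra unwinds to an $S$-algebra plus a $T$-algebra whose action is an $S$-algebra morphism, which is literally the interaction square. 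Your route is conceptually cleaner --- it makes transparent \emph{why} the interaction axiom appears (it is $S$-linearity of the $T$-action) and reduces the verification to unwinding definitions --- but note one small overstatement: the explicit formula for the lift, $T'(A,s) = (TA,\, Ts \circ \lambda_A)$, together with the fact that $\eta^{T'}$ and $\mu^{T'}$ have the same underlying components as $\eta^T$ and $\mu^T$, is not ``precisely the statement'' of Theorem~\ref{barrwells} as recorded in the paper; it is part of Beck's construction in \cite{bec1}, which the theorem cites for its proof, so you are importing slightly more than the bare statement. The paper's direct approach, by contrast, buys explicit formulas and a string-diagram calculus that it then reuses and extends for the genuinely new content (the characterisation of \emph{weak} maps in Theorem~\ref{weakTSmap}, where no off-the-shelf lifted-monad argument is available at the chosen level of strictness), which is why the authors set the 1-dimensional case up that way.
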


We follow Beck and call this a $\lambda$-distributive algebra pair, and will refer to this diagram as ``$s/t$ interaction''.

We have included a proof in the Appendix, expressed in string diagrams, as we found it helpful for what follows.

\begin{remark}
It will later be useful to have an explicit correspondence between $TS$-algebras and $\lambda$-distributive algebra pairs.

\begin{itemize}

\item Given a $TS$-algebra $\left(\pspicture(-3.5,8)(3.5,17)
\rput(0,14){\rnode{a1}{$TSA$}}  
\rput(0,3){\rnode{a2}{$A$}}  

\ncline{->}{a1}{a2} \naput{{\scriptsize $\theta$}} 

\endpspicture\right)$
we get the pair
$\left(\pspicture(-3.5,13.5)(18,28)
\rput(0,25){\rnode{a0}{$SA$}}
\rput(0,14){\rnode{a1}{$TSA$}}  
\rput(0,3){\rnode{a2}{$A$}}  

\ncline{->}{a0}{a1} \naput[npos=0.45]{{\scriptsize $\eta^T_{SA}$}} 
\ncline{->}{a1}{a2} \naput{{\scriptsize $\theta$}} 
\rput(6,13){,}
\rput(12,0){
\rput(0,25){\rnode{a0}{$TA$}}
\rput(0,14){\rnode{a1}{$TSA$}}  
\rput(0,3){\rnode{a2}{$A$}}  

\ncline{->}{a0}{a1} \naput[npos=0.45]{{\scriptsize $T\eta^S_A$}} 
\ncline{->}{a1}{a2} \naput{{\scriptsize $\theta$}} 
}
\endpspicture\right)$


\item Given the pair
$\left(\pspicture(-3,8)(14,17)
\rput(0,0){
\rput(0,14){\rnode{a1}{$SA$}}  
\rput(0,3){\rnode{a2}{$A$}}  
\ncline{->}{a1}{a2} \naput{{\scriptsize $s$}} 
}
\rput(6,8){,}
\rput(10,0){
\rput(0,14){\rnode{a1}{$TA$}}  
\rput(0,3){\rnode{a2}{$A$}}  
\ncline{->}{a1}{a2} \naput{{\scriptsize $t$}} 
}
\endpspicture\right)$
we get the $TS$-algebra
$\left(\pspicture(-3.5,13.5)(3.5,28)
\rput(0,25){\rnode{a0}{$TSA$}}
\rput(0,14){\rnode{a1}{$TA$}}  
\rput(0,3){\rnode{a2}{$A$}}  

\ncline{->}{a0}{a1} \naput[npos=0.45]{{\scriptsize $Ts$}} 
\ncline{->}{a1}{a2} \naput[npos=0.45]{{\scriptsize $t$}} 
\endpspicture\right)$


\end{itemize}

\noi This follows from the above corollary and standard correspondence between $T$-algebras and $T'$-algebras.

\end{remark}

\begin{examples}

\begin{enumerate}

\item In the case of rings, the above correspondence expresses a ring as a set with a monoid structure and an abelian group structure satisfying distributivity of the monoid operation over the group operation, that is, the usual direct definition of a ring.

\item In the case of 2-categories, the above correspondence expresses a 2-category as a 2-globular set equipped with vertical and horizontal composition satisfying interchange, that is, one of the usual direct definitions of a 2-category.

\item We can modify the example of 2-categories to produce doubly degenerate 2-categories. Let $\cC, T, S, \lambda$ be as in Example~\ref{egtwopointfour}.  A doubly degenerate $TS$-algebra is a $\lambda$-distributive pair where the underlying 2-globular set is of the form

%
\[
\psset{unit=0.1cm,labelsep=0pt,nodesep=3pt}
\pspicture(0,-3)(40,3)
\rput(0,0){\rnode{A}{$A_2$}}
\rput(20,0){\rnode{B}{$1$}}
\rput(40,0){\rnode{C}{$1$}}
\psset{nodesep=3pt,arrows=->}
\ncline[offset=3.5pt]{A}{B}\naput[npos=0.5,labelsep=2pt]{{\scriptsize $$}}
\ncline[offset=-3.5pt]{A}{B}\nbput[npos=0.5,labelsep=2pt]{{\scriptsize$$}}
\ncline[offset=3.5pt]{B}{C}\naput[npos=0.5,labelsep=2pt]{{\scriptsize$$}}
\ncline[offset=-3.5pt]{B}{C}\nbput[npos=0.5,labelsep=2pt]{{\scriptsize$$}}

\endpspicture
\]

\noi Thus it is a set equipped with two monoid structures, with one distributing over the other. The standard Eckmann--Hilton argument shows that these must be the same and commutative.

\item Finally, the correspondence tells us that a \ddbicatscat\ is a \dd\ \cat{Cat-Gph}-graph (that is, a category) equipped with a weak ``vertical'' tensor product and a strict ``horizontal'' tensor product, satisfying strict interchange.

\end{enumerate}

\end{examples}

\begin{remark}

Doubly degenerate 2-categories cannot be simply expressed by restricting $T$ and $S$ to a category of doubly degenerate 2-globular sets (ie \Set) as $T$ does not restrict --- even if $A$ is a doubly degenerate 2-globular set, $TA$ is not as it creates formal composites of the unique 1-cell. The same is true for doubly-degenerate \bicatscats.
\end{remark}


%
%


\subsection{Maps of algebras via distributive laws}

We now move on to consider maps of algebras via distributive laws. Just as we can express $TS$-algebras in terms of a $T$-algebra and an $S$-algebra (in the presence of a distributive law of $S$ over $T$), we can express maps of $TS$-algebras as a map of $T$-algebras and a map of $S$-algebras.

\begin{theorem}

In the presence of a distributive law of monads $S$ over $T$, a map $f$ of $TS$-algebras is precisely a map $f$ which is a map of both the associated $T$-algebra and the associated $S$-algebra.

\end{theorem}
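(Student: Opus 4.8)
The plan is to work directly from the explicit correspondence between $TS$-algebras and $\lambda$-distributive algebra pairs recorded in the remark following Corollary~\ref{distpair}, proving the two implications separately. Recall that a $TS$-algebra $(A,\theta)$ induces a $T$-algebra $t = \theta \circ T\eta^S_A$ and an $S$-algebra $s = \theta \circ \eta^T_{SA}$, while conversely a pair $(s,t)$ reassembles into $\theta = t \circ Ts$, and these two passages are mutually inverse. Since a map $f$ carries the same underlying morphism in all three guises, it suffices to show that the single equation expressing that $f$ is a map of $TS$-algebras, namely $f \circ \theta_A = \theta_B \circ TSf$, is equivalent to the conjunction of the equations $f \circ t_A = t_B \circ Tf$ and $f \circ s_A = s_B \circ Sf$.

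First I would treat the forward direction. Assuming $f \circ \theta_A = \theta_B \circ TSf$, I precompose with $T\eta^S_A$ to obtain $f \circ t_A = \theta_B \circ TSf \circ T\eta^S_A$; naturality of the unit $\eta^S$ at $f$ rewrites $Sf \circ \eta^S_A$ as $\eta^S_B \circ f$, and applying $T$ turns $TSf \circ T\eta^S_A$ into $T\eta^S_B \circ Tf$, collapsing the right-hand side to $t_B \circ Tf$. Symmetrically, precomposing with $\eta^T_{SA}$ and using naturality of $\eta^T$ at $Sf$ to rewrite $TSf \circ \eta^T_{SA}$ as $\eta^T_{SB} \circ Sf$ yields $f \circ s_A = s_B \circ Sf$. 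Thus $f$ is simultaneously a map of $T$-algebras and of $S$-algebras.

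For the converse I would start from $\theta = t \circ Ts$ and compute $f \circ \theta_A = f \circ t_A \circ Ts_A$; applying the assumed $T$-compatibility of $f$, then functoriality of $T$ together with its $S$-compatibility, and finally reassembling $t_B \circ Ts_B = \theta_B$, gives $f \circ \theta_A = \theta_B \circ TSf$. The entire argument is a short diagram chase whose only ingredients are naturality of the two units and functoriality of $T$, so there is no genuine obstacle; the one point requiring care is the bookkeeping of the two distinct units $\eta^S$ and $\eta^T$, ensuring that each naturality square is invoked at the correct object. One could instead argue conceptually via Theorem~\ref{barrwells}, noting that $TS$-algebras are exactly $T'$-algebras for the lifted monad $T'$ on $S\cat{-Alg}$, so that a map of $TS$-algebras is by definition a map of $S$-algebras commuting with the $T'$-action, which unwinds to $T$-compatibility; I would nonetheless present the direct calculation as the main proof, since it is self-contained and makes the role of the units transparent, and mention the conceptual viewpoint only in passing.
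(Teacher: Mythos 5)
Your proposal is correct and is essentially the paper's own proof: the paper's forward direction is exactly your two precompositions, drawn as stacked squares whose top faces are the naturality squares for $\eta^T$ at $Sf$ and for $T\eta^S$ at $f$, and its converse is exactly your computation from $\theta = t \circ Ts$, drawn as the pasting of $T$ applied to the $S$-map square on top of the $T$-map square. The only difference is presentational (equational chase versus pasted commutative diagrams), so no further comparison is needed.
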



\begin{proof}
Consider $TS$-algebras $TSA \tmap{\theta} A$ and $TSB \tmap{\phi} B$ and a map $A \tmap{f} B$ between them, so the following diagram commutes. 
\[
\psset{unit=0.1cm,labelsep=2pt,nodesep=3pt}
\pspicture(0,-2)(20,20)



\rput(0,16){\rnode{a1}{$TSA$}}  
\rput(20,16){\rnode{a2}{$TSB$}}  
\rput(0,0){\rnode{a3}{$A$}}  
\rput(20,0){\rnode{a4}{$B$}}  

\ncline{->}{a1}{a2} \naput{{\scriptsize $TSf$}} 
\ncline{->}{a3}{a4} \nbput{{\scriptsize $f$}} 
\ncline{->}{a1}{a3} \nbput{{\scriptsize $\theta$}} 
\ncline{->}{a2}{a4} \naput{{\scriptsize $\phi$}} 

\endpspicture
\]
We need to check that $f$ is a map of the associated $T$-algebras and of the associated $S$-algebras. This is seen from the following diagrams where the top squares are naturality squares:

\[\psset{unit=0.08cm,labelsep=0pt,nodesep=3pt}
\pspicture(0,-5)(60,42)

\rput(0,20){
\pspicture(20,42)


\rput(0,40){\rnode{a1}{$SA$}} 
\rput(24,40){\rnode{a2}{$SB$}} 

\rput(0,20){\rnode{b1}{$TSA$}}   
\rput(24,20){\rnode{b2}{$TSB$}}  

\rput(0,0){\rnode{c1}{$A$}}   
\rput(24,0){\rnode{c2}{$B$}}  

\psset{nodesep=3pt,labelsep=2pt,arrows=->}
\ncline{a1}{a2}\naput{{\scriptsize $Sf$}} 
\ncline{b1}{b2}\nbput[npos=0.45]{{\scriptsize $TSf$}} 
\ncline{c1}{c2}\nbput{{\scriptsize $f$}} 

\ncline{a1}{b1}\nbput{{\scriptsize $\eta^T_{SA}$}} 
\ncline{b1}{c1}\nbput{{\scriptsize $\theta_A$}} 

\ncline{a2}{b2}\naput{{\scriptsize $\eta^T_{SB}$}} 
\ncline{b2}{c2}\naput{{\scriptsize $\theta_B$}} 

\psset{labelsep=1.5pt}

%

\endpspicture}

\rput(50,20){
\pspicture(20,42)


\rput(-2,40){\rnode{a1}{$TA$}} 
\rput(22,40){\rnode{a2}{$TB$}} 

\rput(-2,20){\rnode{b1}{$TSA$}}   
\rput(22,20){\rnode{b2}{$TSB$}}  

\rput(-2,0){\rnode{c1}{$A$}}   
\rput(22,0){\rnode{c2}{$B$}}  

\psset{nodesep=3pt,labelsep=2pt,arrows=->}
\ncline{a1}{a2}\naput{{\scriptsize $Tf$}} 
\ncline{b1}{b2}\nbput[npos=0.45]{{\scriptsize $TSf$}} 
\ncline{c1}{c2}\nbput{{\scriptsize $f$}} 

\ncline{a1}{b1}\nbput{{\scriptsize $T\eta^S_{A}$}} 
\ncline{b1}{c1}\nbput{{\scriptsize $\theta_A$}} 

\ncline{a2}{b2}\naput{{\scriptsize $T\eta^S_{B}$}} 
\ncline{b2}{c2}\naput{{\scriptsize $\theta_B$}} 

\psset{labelsep=1.5pt}

%
%

\endpspicture}

\endpspicture
\]

Conversely suppose we know that $f$ is a map of the associated $T$-algebra and $S$-algebra structures given by the $\lambda$-distributive pairs

\[\psset{nodesep=2pt}
\left(\pspicture(-3,8)(14,17)
\rput(0,0){
\rput(0,14){\rnode{a1}{$TA$}}  
\rput(0,3){\rnode{a2}{$A$}}  
\ncline{->}{a1}{a2} \naput{{\scriptsize $a_t$}} 
}
\rput(6,8){,}
\rput(10,0){
\rput(0,14){\rnode{a1}{$SA$}}  
\rput(0,3){\rnode{a2}{$A$}}  
\ncline{->}{a1}{a2} \naput{{\scriptsize $a_s$}} 
}
\endpspicture\right)
\hs{1} \mbox{and} \hs{1}
\psset{nodesep=2pt}
\left(\pspicture(-3,8)(14,17)
\rput(0,0){
\rput(0,14){\rnode{a1}{$TB$}}  
\rput(0,3){\rnode{a2}{$B$}}  
\ncline{->}{a1}{a2} \naput{{\scriptsize $b_t$}} 
}
\rput(6,8){,}
\rput(10,0){
\rput(0,14){\rnode{a1}{$SB$}}  
\rput(0,3){\rnode{a2}{$B$}}  
\ncline{->}{a1}{a2} \naput{{\scriptsize $b_s$}} 
}
\endpspicture\right)
\]

We check that $f$ satisfies the axioms for a map of $TS$-algebras. This is seen from the following diagram,where by definition the left-hand map is $\theta$ and the right-hand map is $\phi$, and the squares are the axioms for a $T$-algebra map and $S$-algebra map:

 \[\psset{unit=0.08cm,labelsep=0pt,nodesep=3pt}
\pspicture(0,-2)(20,42)

\rput(8,20){
\pspicture(20,42)


\rput(-2,40){\rnode{a1}{$TSA$}} 
\rput(22,40){\rnode{a2}{$TSB$}} 

\rput(-2,20){\rnode{b1}{$TA$}}   
\rput(22,20){\rnode{b2}{$TB$}}  

\rput(-2,0){\rnode{c1}{$A$}}   
\rput(22,0){\rnode{c2}{$B$}}  

\psset{nodesep=3pt,labelsep=2pt,arrows=->}
\ncline{a1}{a2}\naput[npos=0.45]{{\scriptsize $TSf$}} 
\ncline{b1}{b2}\nbput{{\scriptsize $Tf$}} 
\ncline{c1}{c2}\nbput{{\scriptsize $f$}} 

\ncline{a1}{b1}\nbput{{\scriptsize $Ta_s$}} 
\ncline{b1}{c1}\nbput{{\scriptsize $a_t$}} 

\ncline{a2}{b2}\naput{{\scriptsize $Tb_s$}} 
\ncline{b2}{c2}\naput{{\scriptsize $b_t$}} 

\psset{labelsep=1.5pt}

%

\endpspicture}

\endpspicture
\]

\end{proof}

\begin{examples}

\begin{enumerate}

\item A ring homomorphism consists of a map that is both a group homomorphism and a monoid homomorphism.

\item A functor of 2-categories is a map of the underlying 2-globular sets that respects both the horizontal composition and the vertical composition.  This has a slightly different emphasis from the expression as a $T'$-algebra map which says it is a functor on hom-categories and a \Cat-enriched functor as well.

\end{enumerate}

\end{examples}


We now move to the case of weak maps of algebras.


\subsection{Weak maps of algebras via distributive laws}

We now extend the 1-categorical results to a 2-categorical framework. Since we are dealing with strict 2-monads, strict algebras and strict \dls\ throughout this work, the previous theorems characterising $TS$-algebras still hold.  However, need a new result characterising weak maps of $TS$-algebras.  First we recall the definition of a weak map of algebras for a 2-monad $T$.

\begin{definition}
Let $T$ be a 2-monad on a 2-category \cC.  Given strict algebras $TA \tmap{a} A$ and $TB \tmap{b} B$ a weak map between them is given by a 1-cell $A \tmap{f} B$ and a 2-cell isomorphism
\[\psset{unit=0.075cm,labelsep=0pt,nodesep=3pt}
\pspicture(20,22)


\rput(0,20){\rnode{a1}{$TA$}} 
\rput(20,20){\rnode{a2}{$TB$}} 

\rput(0,0){\rnode{b1}{$A$}}   
\rput(20,0){\rnode{b2}{$B$}}  

\psset{nodesep=3pt,labelsep=2pt,arrows=->}
\ncline{a1}{a2}\naput{{\scriptsize $Tf$}} 
\ncline{b1}{b2}\nbput{{\scriptsize $f$}} 
\ncline{a1}{b1}\nbput{{\scriptsize $a$}} 
\ncline{a2}{b2}\naput{{\scriptsize $b$}} 

\psset{labelsep=1.5pt}
\pnode(13,13){a3}
\pnode(7,7){b3}
\ncline[doubleline=true,arrowinset=0.6,arrowlength=0.8,arrowsize=0.5pt 2.1]{a3}{b3} \nbput[npos=0.4]{{\scriptsize $\tau$}}
\naput[npos=0.4]{{\scriptsize $\sim$}}

\endpspicture\]
satisfying the following axioms. 
\[\psset{unit=0.1cm,labelsep=1pt,nodesep=2pt}
\pspicture(0,5)(50,43)


\rput(26,27){$=$}

\rput(-25,0){

\rput(20,12){\rnode{a2}{$A$}}  
\rput(40,12){\rnode{a4}{$B$}}  
\rput(6,21){\rnode{b1}{$TA$}}  
\rput(20,30){\rnode{c2}{$TA$}}  
\rput(40,30){\rnode{c4}{$TB$}}  
\rput(6,38){\rnode{d1}{$T^2A$}}  
\rput(27,38){\rnode{d3}{$T^2B$}}

\ncline{->}{d1}{d3} \naput{{\scriptsize $T^2f$}} 
\ncline{->}{d3}{c4} \naput{{\scriptsize $\mu_B$}}
\ncline{->}{d1}{c2} \nbput{{\scriptsize $\mu_A$}}
\ncline{->}{c2}{c4} \nbput{{\scriptsize $Tf$}} 

\ncline{->}{d1}{b1} \nbput{{\scriptsize $Ta$}}
\ncline{->}{b1}{a2} \nbput{{\scriptsize $a$}}
\ncline{->}{c2}{a2} \naput{{\scriptsize $a$}} 

\ncline{->}{c4}{a4} \naput{{\scriptsize $b$}} 
\ncline{->}{a2}{a4} \nbput[labelsep=3pt]{{\scriptsize $f$}}


{
\rput[c](22,34){\scr $=$}
}

{
\rput[c](13,26){\scr $=$}
}

{
\rput[c](29,20){\psset{unit=1mm,doubleline=true,arrowinset=0.6,arrowlength=0.5,arrowsize=0.5pt 2.1,nodesep=0pt,labelsep=2pt}
\pcline{->}(3,3)(0,0) \naput{{\scriptsize $\tau$}}}}
}

\rput(35,0){


\rput(19,12){\rnode{a2}{$A$}}  
\rput(40,12){\rnode{a4}{$B$}}  
\rput(6,21){\rnode{b1}{$TA$}}  
\rput(27,21){\rnode{b3}{$TB$}}  

\rput(40,30){\rnode{c4}{$TB$}}  
\rput(6,38){\rnode{d1}{$T^2A$}}  
\rput(27,38){\rnode{d3}{$T^2B$}}

%
%
%

\ncline{->}{d1}{d3} \naput{{\scriptsize $T^2f$}} 
\ncline{->}{d3}{c4} \naput{{\scriptsize $\mu_B$}}

\ncline{->}{d1}{b1} \nbput{{\scriptsize $Ta$}}
\ncline{->}{b1}{a2} \nbput{{\scriptsize $a$}}

\ncline{->}{c4}{a4} \naput{{\scriptsize $b$}} 
\ncline{->}{a2}{a4} \nbput[labelsep=3pt]{{\scriptsize $f$}}
\ncline{->}{b3}{a4} \naput{{\scriptsize $b$}} 

\ncline{->}{d3}{b3} \naput{{\scriptsize $Tb$}}
\ncline{->}{b1}{b3} \naput{{\scriptsize $Tf$}}


{
\rput[c](34,26){\scr $=$}
}

{
\rput[c](23,15){\psset{unit=1mm,doubleline=true,arrowinset=0.6,arrowlength=0.5,arrowsize=0.5pt 2.1,nodesep=0pt,labelsep=2pt}
\pcline{->}(3,3)(0,0) \naput{{\scriptsize $\tau$}}}}

{
\rput[c](15,29){\psset{unit=1mm,doubleline=true,arrowinset=0.6,arrowlength=0.5,arrowsize=0.5pt 2.1,nodesep=0pt,labelsep=2pt}
\pcline{->}(3,3)(0,0) \naput{{\scriptsize T$\tau$}}}}

}

\endpspicture\]


\[\psset{unit=0.085cm,labelsep=1pt,nodesep=2pt}
\pspicture(0,8)(50,36)

%

\rput(-32,0){

\rput(25,12){\rnode{a2}{$A$}}  
\rput(46,12){\rnode{a4}{$B$}}  

\rput(25,30){\rnode{c2}{$TA$}}  
\rput(46,30){\rnode{c4}{$TB$}}  
\rput(5,40){\rnode{d1}{$A$}}  
\rput(27,40){\rnode{d3}{$B$}}

\ncline{->}{d1}{d3} \naput{{\scriptsize $f$}} 
\ncline{->}{d3}{c4} \naput{{\scriptsize $\eta_B$}}
\ncline{->}{d1}{c2} \nbput[npos=0.55]{{\scriptsize $\eta_A$}}
\ncline{->}{c2}{c4} \naput{{\scriptsize $Tf$}} 

\ncline{->}{d1}{a2} \nbput{{\scriptsize $1$}}
\ncline{->}{c2}{a2} \naput{{\scriptsize $a$}} 

\ncline{->}{c4}{a4} \naput{{\scriptsize $b$}} 
\ncline{->}{a2}{a4} \nbput[labelsep=3pt]{{\scriptsize $f$}}


{
\rput[c](24,35){\scr $=$}
}

{
\rput[c](19,26){\scr $=$}
}

{
\rput[c](34,20){\psset{unit=1mm,doubleline=true,arrowinset=0.6,arrowlength=0.5,arrowsize=0.5pt 2.1,nodesep=0pt,labelsep=2pt}
\pcline{->}(3,3)(0,0) \naput{{\scriptsize $\tau$}}}}

}

\rput(30,23){$=$}

\rput(35,0){

%

\rput(25,12){\rnode{a2}{$A$}}  
\rput(46,12){\rnode{a4}{$B$}}  

\rput(46,30){\rnode{c4}{$TB$}}  
\rput(5,40){\rnode{d1}{$A$}}  
\rput(28,40){\rnode{d3}{$B$}}

\ncline{->}{d1}{d3} \naput{{\scriptsize $f$}} 
\ncline{->}{d3}{c4} \naput{{\scriptsize $\eta_B$}}

\ncline{->}{d1}{a2} \nbput{{\scriptsize $1$}}

\ncline{->}{c4}{a4} \naput{{\scriptsize $b$}} 
\ncline{->}{a2}{a4} \nbput[labelsep=3pt]{{\scriptsize $f$}}
\ncline{->}{d3}{a4} \nbput{{\scriptsize $1$}}


{
\rput[c](41,26){\scr $=$}
}

\rput[c](25,27){{\scriptsize $=$}}
}

\endpspicture\]

\end{definition}

\begin{example}
Let $T$ be the 2-monad whose strict algebras are weak monoidal categories. Then the weak maps are weak monoidal functors but expressed slightly differently, with \emph{a priori} a coherence map for every parenthesised word, not just binary and nullary words. That is, not only do we have the usual specified coherence maps
\[Fx \otimes Fy \tra F(x\otimes y)\]
and
\[I \tra FI\]
but also coherence maps such as
\[Fx \otimes (Fy \otimes Fz) \tra F\big(x \otimes (y \otimes z)\big).\]
However, the axioms for a weak map of algebras ensure that the coherence constraints for ``non-standard'' arities must be built from the binary and nullary ones in the usual way, so that these weak maps coincide with the usual biased definition of weak monoidal functor.
\end{example}

We now consider 2-monads $S, T$ with a distributive law, and characterise weak maps of $TS$-algebras in a manner similar to the strict maps. However, as the maps are now weak a new axiom is needed, governing the interaction between the constraint cells.

\begin{theorem}\label{weakTSmap}
Let $S$ and $T$ be 2-monads on a 2-category $\cC$, and let $\lambda\: ST \Tra TS$ be a  2-categorical distributive law. Then a weak map of $TS$-algebras
\[\psset{nodesep=2pt}
\left(\pspicture(-3,8)(14,15)
\rput(0,0){
\rput(0,14){\rnode{a1}{$TA$}}  
\rput(0,3){\rnode{a2}{$A$}}  
\ncline{->}{a1}{a2} \naput{{\scriptsize $a_t$}} 
}
\rput(6,8){,}
\rput(10,0){
\rput(0,14){\rnode{a1}{$SA$}}  
\rput(0,3){\rnode{a2}{$A$}}  
\ncline{->}{a1}{a2} \naput{{\scriptsize $a_s$}} 
}
\endpspicture\right)
\ltra
\psset{nodesep=2pt}
\left(\pspicture(-3,8)(14,17)
\rput(0,0){
\rput(0,14){\rnode{a1}{$TB$}}  
\rput(0,3){\rnode{a2}{$B$}}  
\ncline{->}{a1}{a2} \naput{{\scriptsize $b_t$}} 
}
\rput(6,8){,}
\rput(10,0){
\rput(0,14){\rnode{a1}{$SB$}}  
\rput(0,3){\rnode{a2}{$B$}}  
\ncline{->}{a1}{a2} \naput{{\scriptsize $b_s$}} 
}
\endpspicture\right)
\]

\noi is given by a 1-cell $A \tmap{f} B$ and 2-cells as below giving individual weak maps:

\[\psset{unit=0.08cm,labelsep=0pt,nodesep=3pt}
\pspicture(0,-6)(70,22)


\rput(0,0){
\rput(0,20){\rnode{a1}{$TA$}} 
\rput(20,20){\rnode{a2}{$TB$}} 

\rput(0,0){\rnode{b1}{$A$}}   
\rput(20,0){\rnode{b2}{$B$}}  

\psset{nodesep=3pt,labelsep=2pt,arrows=->}
\ncline{a1}{a2}\naput{{\scriptsize $Tf$}} 
\ncline{b1}{b2}\nbput{{\scriptsize $f$}} 
\ncline{a1}{b1}\nbput{{\scriptsize $a_t$}} 
\ncline{a2}{b2}\naput{{\scriptsize $b_t$}} 

\psset{labelsep=1.5pt}
\pnode(13,13){a3}
\pnode(7,7){b3}
\ncline[doubleline=true,arrowinset=0.6,arrowlength=0.8,arrowsize=0.5pt 2.1]{a3}{b3} \nbput[npos=0.4]{{\scriptsize $\tau$}}
\naput[npos=0.4]{{\scriptsize $\sim$}}
}

\rput(50,0){
\rput(0,20){\rnode{a1}{$SA$}} 
\rput(20,20){\rnode{a2}{$SB$}} 

\rput(0,0){\rnode{b1}{$A$}}   
\rput(20,0){\rnode{b2}{$B$}}  

\psset{nodesep=3pt,labelsep=2pt,arrows=->}
\ncline{a1}{a2}\naput{{\scriptsize $Sf$}} 
\ncline{b1}{b2}\nbput{{\scriptsize $f$}} 
\ncline{a1}{b1}\nbput{{\scriptsize $a_s$}} 
\ncline{a2}{b2}\naput{{\scriptsize $b_s$}} 

\psset{labelsep=1.5pt}
\pnode(13,13){a3}
\pnode(7,7){b3}
\ncline[doubleline=true,arrowinset=0.6,arrowlength=0.8,arrowsize=0.5pt 2.1]{a3}{b3} \nbput[npos=0.4]{{\scriptsize $\sigma$}}
\naput[npos=0.4]{{\scriptsize $\sim$}}
}

\endpspicture\]

\noi such that the following ``interaction axiom'' holds:

\[\psset{unit=0.1cm,labelsep=1pt,nodesep=2pt}
\pspicture(0,10)(40,60)


\rput(26,27){$=$}

\rput(-25,0){

\rput(20,12){\rnode{a2}{$A$}}  
\rput(42,12){\rnode{a4}{$B$}}  
\rput(-8,30){\rnode{b1}{$SA$}}  
\rput(20,30){\rnode{c2}{$TA$}}  
\rput(42,30){\rnode{c4}{$TB$}}  
\rput(6,38){\rnode{d1}{$TSA$}}  
\rput(28,38){\rnode{d3}{$TSB$}}  

\rput(-8,46){\rnode{e0}{$STA$}}  
\rput(14,46){\rnode{e2}{$STB$}}

\ncline{->}{d1}{d3} \naput{{\scriptsize $TSf$}} 
\ncline{->}{d3}{c4} \naput{{\scriptsize $Tb_s$}}
\ncline{->}{d1}{c2} \nbput[labelsep=0pt]{{\scriptsize $Ta_s$}}
\ncline{->}{c2}{c4} \nbput{{\scriptsize $Tf$}} 

\ncline{->}{e0}{b1} \nbput{{\scriptsize $Sa_t$}}
\ncline{->}{b1}{a2} \nbput{{\scriptsize $a_s$}}
\ncline{->}{c2}{a2} \naput{{\scriptsize $a_t$}} 

\ncline{->}{c4}{a4} \naput{{\scriptsize $b_t$}} 
\ncline{->}{a2}{a4} \nbput[labelsep=3pt]{{\scriptsize $f$}}

\ncline{->}{e0}{e2} \naput{{\scriptsize $STf$}} 
\ncline{->}{e0}{d1} \nbput[labelsep=0pt]{{\scriptsize $\lambda_A$}} 
\ncline{->}{e2}{d3} \naput{{\scriptsize $\lambda_B$}}


{
\rput[c](10,42){\scr $=$}
}

{
\rput[c](23,32.5)
{\psset{unit=1mm,doubleline=true,arrowinset=0.6,arrowlength=0.5,arrowsize=0.5pt 2.1,nodesep=0pt,labelsep=1pt}
\pcline{->}(2.5,2.5)(0,0) \naput[npos=0.35]{{\scriptsize $T\sigma$}}}
}

{
\rput[c](6,29){\scr $=$}
}

{
\rput[c](30,20){\psset{unit=1mm,doubleline=true,arrowinset=0.6,arrowlength=0.5,arrowsize=0.5pt 2.1,nodesep=0pt,labelsep=2pt}
\pcline{->}(3,3)(0,0) \naput{{\scriptsize $\tau$}}}}
}

\rput(34,10){


\rput(19,12){\rnode{a2}{$A$}}  
\rput(42,12){\rnode{a4}{$B$}}  
\rput(-8,30){\rnode{b1}{$SA$}}  
\rput(14,30){\rnode{b2}{$SB$}}  

\rput(42,30){\rnode{c4}{$TB$}}  
\rput(28,38){\rnode{d3}{$TSB$}}  

\rput(-8,46){\rnode{e0}{$STA$}}  
\rput(14,46){\rnode{e2}{$STB$}}

\ncline{->}{d3}{c4} \naput{{\scriptsize $Tb_s$}}

\ncline{->}{e0}{b1} \nbput{{\scriptsize $Sa_t$}}
\ncline{->}{b1}{a2} \nbput{{\scriptsize $a_s$}}
\ncline{->}{e2}{b2} \naput{{\scriptsize $Sb_t$}} 

\ncline{->}{c4}{a4} \naput{{\scriptsize $b_t$}} 
\ncline{->}{a2}{a4} \nbput[labelsep=3pt]{{\scriptsize $f$}}

\ncline{->}{e0}{e2} \naput{{\scriptsize $STf$}} 
\ncline{->}{e2}{d3} \naput{{\scriptsize $\lambda_B$}} 

\ncline{->}{b1}{b2} \naput{{\scriptsize $Sf$}} 
\ncline{->}{b2}{a4} \naput{{\scriptsize $b_s$}}


{
\rput[c](28,29){\scr $=$}
}

{
\rput[c](16,20){\psset{unit=1mm,doubleline=true,arrowinset=0.6,arrowlength=0.5,arrowsize=0.5pt 2.1,nodesep=0pt,labelsep=2pt}
\pcline{->}(3,3)(0,0) \naput{{\scriptsize $\sigma$}}}}

{
\rput[c](2,36){\psset{unit=1mm,doubleline=true,arrowinset=0.6,arrowlength=0.5,arrowsize=0.5pt 2.1,nodesep=0pt,labelsep=2pt}
\pcline{->}(3,3)(0,0) \naput{{\scriptsize S$\tau$}}}}

}

\endpspicture\]

\end{theorem}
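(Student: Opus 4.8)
The plan is to exploit the correspondence of Corollary~\ref{distpair} between $TS$-algebras and $\lambda$-distributive pairs, and to show that the single weak-map constraint for the composite $2$-monad $TS$ decomposes into the two constraints $\tau$ and $\sigma$, with the interaction axiom recording exactly the compatibility forced by the multiplication of $TS$. Under Corollary~\ref{distpair} the composite algebra structures are $a_{TS}=a_t\circ Ta_s$ and $b_{TS}=b_t\circ Tb_s$, while by Theorem~\ref{barrwells} the composite monad has unit $\eta^{TS}=\eta^T\eta^S$ and multiplication $\mu^{TS}_A=T\mu^S_A\circ\mu^T_{SSA}\circ T\lambda_{SA}$. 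A weak map of $TS$-algebras is then a $1$-cell $A\tmap{f}B$ together with an invertible $2$-cell $\Theta\:b_{TS}\circ TSf\Tra f\circ a_{TS}$ satisfying the unit and multiplication axioms of a weak algebra map, and the goal is to match this datum with $(f,\tau,\sigma)$ plus the interaction axiom.

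First I would set up the decomposition. Given $\Theta$, I define $\sigma$ by whiskering with $\eta^T_{SA}$ and $\tau$ by whiskering with $T\eta^S_A$; the monad unit laws and the naturality of $\eta^T,\eta^S$ give $a_{TS}\circ\eta^T_{SA}=a_s$ and $a_{TS}\circ T\eta^S_A=a_t$ (likewise for $B$), so these restrictions are exactly $2$-cells $\sigma\:b_s\circ Sf\Tra f\circ a_s$ and $\tau\:b_t\circ Tf\Tra f\circ a_t$. Conversely, from $\tau$ and $\sigma$ I reconstruct
\[\Theta \;=\; (\tau\ast Ta_s)\cdot(b_t\ast T\sigma),\]
using the functoriality identity $Tb_s\circ TSf=T(b_s\circ Sf)$. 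Invertibility of $\Theta$ is equivalent to that of $\tau$ and $\sigma$, since whiskering, the $2$-functor $T$, and vertical composition all preserve isomorphisms. That these two passages are mutually inverse follows from the unit coherences, exactly as in the strict-map theorem proved above.

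Next I would verify the weak-map axioms for $\Theta$ in terms of those for $\tau$ and $\sigma$. After restricting along $\eta^{TS}=\eta^T\eta^S$, the unit axiom for $\Theta$ splits cleanly into the unit axioms for $\tau$ and for $\sigma$, with no residual condition. The multiplication axiom is the crux: expanding $\Theta$ and substituting $\mu^{TS}_A=T\mu^S_A\circ\mu^T_{SSA}\circ T\lambda_{SA}$, the contributions built purely from $\mu^T$ and from $\mu^S$ reassemble into the multiplication axioms for $\tau$ and $\sigma$ individually, while the remaining cross-term carries the factor $T\lambda_{SA}$. Comparing the two ways of transporting $\tau$ and $\sigma$ across $\lambda$ is \emph{precisely} the interaction axiom. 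The ingredients are the $2$-dimensional naturality (cylinder condition) of $\lambda$ established in Section~\ref{one}, the $s/t$ interaction of the distributive pair from Corollary~\ref{distpair}, and the individual multiplication axioms for $\tau$ and $\sigma$.

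I expect this last pasting computation to be the main obstacle, and I would carry it out in string diagrams, as the planar rewriting makes transparent both the emergence of the $T\lambda_{SA}$ cross-term and the cancellation of the purely-$T$ and purely-$S$ contributions against the two individual multiplication axioms. Everything else---the decomposition bijection, the invertibility equivalence, and the unit axiom---is routine bookkeeping with the monad unit/multiplication laws and the distributive-law identities, and I would relegate the detailed diagrammatic verification to the Appendix.
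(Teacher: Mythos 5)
Your proposal follows the paper's proof essentially step for step: the paper likewise extracts $\sigma$ and $\tau$ by whiskering the composite constraint with $\eta^T_{SA}$ and $T\eta^S_A$ (noting that $\eta^T S$ and $T\eta^S$ are monad functors $TS \Tra S$ and $TS \Tra T$), reconstructs the $TS$-constraint as the pasting $(\tau \ast Ta_s)\cdot(b_t \ast T\sigma)$, and defers exactly the same verifications --- the interaction axiom in the forward direction, the unit and multiplication axioms in the converse, and the mutual-inverse checks --- to string diagrams in the Appendix. One small correction to your plan: the check that extract-then-recompile returns the original constraint $\Theta$ requires the \emph{multiplication} axiom for $\Theta$ (it is the top region of the relevant Appendix diagram), not just unit coherences; only the opposite direction (recompile-then-extract) reduces to the unit (``triangle'') axiom.
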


Note that as everything is strict here except the weak maps of algebras, there is a certain amount of overkill in using fully 2-dimensional pasting diagrams.  In particular, the vast majority of faces in the diagrams are merely (strict) naturality squares. Under such circumstances, string diagram notation is particularly efficacious.  All the proofs of commutativity in this section are entirely routine, and only complicated by the difficulty of notating 2-cells.  Thus, we will defer many of the proofs to the appendix where we will use string diagrams.

\begin{proof}
Consider a weak map
\[\psset{unit=0.08cm,labelsep=0pt,nodesep=3pt}
\pspicture(0,-4)(22,24)


\rput(0,20){\rnode{a1}{$TSA$}} 
\rput(22,20){\rnode{a2}{$TSB$}} 

\rput(0,0){\rnode{b1}{$A$}}   
\rput(22,0){\rnode{b2}{$B$}}  

\psset{nodesep=3pt,labelsep=2pt,arrows=->}
\ncline{a1}{a2}\naput[npos=0.45]{{\scriptsize $TSf$}} 
\ncline{b1}{b2}\nbput{{\scriptsize $f$}} 
\ncline{a1}{b1}\nbput{{\scriptsize $\theta_A$}} 
\ncline{a2}{b2}\naput{{\scriptsize $\theta_B$}} 

\psset{labelsep=1.5pt}
\pnode(13,13){a3}
\pnode(7,7){b3}
\ncline[doubleline=true,arrowinset=0.6,arrowlength=0.8,arrowsize=0.5pt 2.1]{a3}{b3} \nbput[npos=0.4]{{\scriptsize $\phi$}}
\naput[npos=0.4]{{\scriptsize $\sim$}}

\endpspicture\]

\noi First note that $\eta^TS: S \Tra TS$ and $T\eta^S: T \Tra TS$ give monad functors $TS \Tra S$ and $TS \Tra T$.  Thus we know that $\sigma$ and $\tau$ given as follows are weak maps of the constituent $S$-algebras and $T$-algebras respectively:

\[\psset{unit=0.08cm,labelsep=0pt,nodesep=3pt}
\pspicture(60,42)

\rput(0,20){
\pspicture(20,42)


\rput(0,40){\rnode{a1}{$SA$}} 
\rput(24,40){\rnode{a2}{$SB$}} 

\rput(0,20){\rnode{b1}{$TSA$}}   
\rput(24,20){\rnode{b2}{$TSB$}}  

\rput(0,0){\rnode{c1}{$A$}}   
\rput(24,0){\rnode{c2}{$B$}}  

\psset{nodesep=3pt,labelsep=2pt,arrows=->}
\ncline{a1}{a2}\naput{{\scriptsize $Sf$}} 
\ncline{b1}{b2}\nbput[npos=0.45]{{\scriptsize $TSf$}} 
\ncline{c1}{c2}\nbput{{\scriptsize $f$}} 

\ncline{a1}{b1}\nbput{{\scriptsize $\eta^T_{SA}$}} 
\ncline{b1}{c1}\nbput{{\scriptsize $\theta_A$}} 

\ncline{a2}{b2}\naput{{\scriptsize $\eta^T_{SB}$}} 
\ncline{b2}{c2}\naput{{\scriptsize $\theta_B$}} 

\psset{labelsep=1.5pt}

\rput(2,0){
\pnode(13,33){a3}
\pnode(7,27){b3}
\ncline[doubleline=true,arrowinset=0.6,arrowlength=0.8,arrowsize=0.5pt 2.1,nodesep=5pt]{-}{a3}{b3} \naput[npos=0.4]{{\scriptsize $$}}
}

\rput(2,0){
\pnode(13,13){a3}
\pnode(7,7){b3}
\ncline[doubleline=true,arrowinset=0.6,arrowlength=0.8,arrowsize=0.5pt 2.1]{a3}{b3} \naput[npos=0.4]{{\scriptsize $\phi$}}
}

\endpspicture}

\rput(50,20){
\pspicture(20,42)


\rput(-2,40){\rnode{a1}{$TA$}} 
\rput(22,40){\rnode{a2}{$TB$}} 

\rput(-2,20){\rnode{b1}{$TSA$}}   
\rput(22,20){\rnode{b2}{$TSB$}}  

\rput(-2,0){\rnode{c1}{$A$}}   
\rput(22,0){\rnode{c2}{$B$}}  

\psset{nodesep=3pt,labelsep=2pt,arrows=->}
\ncline{a1}{a2}\naput{{\scriptsize $Tf$}} 
\ncline{b1}{b2}\nbput[npos=0.45]{{\scriptsize $TSf$}} 
\ncline{c1}{c2}\nbput{{\scriptsize $f$}} 

\ncline{a1}{b1}\nbput{{\scriptsize $T\eta^S_{A}$}} 
\ncline{b1}{c1}\nbput{{\scriptsize $\theta_A$}} 

\ncline{a2}{b2}\naput{{\scriptsize $T\eta^S_{B}$}} 
\ncline{b2}{c2}\naput{{\scriptsize $\theta_B$}} 

\psset{labelsep=1.5pt}

\pnode(13,33){a3}
\pnode(7,27){b3}
\ncline[doubleline=true,arrowinset=0.6,arrowlength=0.8,arrowsize=0.5pt 2.1,nodesep=5pt]{-}{a3}{b3} \naput[npos=0.4]{{\scriptsize $$}}

\pnode(13,13){a3}
\pnode(7,7){b3}
\ncline[doubleline=true,arrowinset=0.6,arrowlength=0.8,arrowsize=0.5pt 2.1]{a3}{b3} \naput[npos=0.4]{{\scriptsize $\phi$}}

\endpspicture}

\endpspicture
\]

\vs{1}

\noi We need to check the interaction axiom; see Appendix.

Conversely given individual weak maps $\sigma$ and $\tau$, we construct the following putative weak map of $TS$-algebras:

 \[\psset{unit=0.08cm,labelsep=0pt,nodesep=3pt}
\pspicture(0,-2)(22,42)

\rput(0,20){
\pspicture(20,42)


\rput(-2,40){\rnode{a1}{$TSA$}} 
\rput(22,40){\rnode{a2}{$TSB$}} 

\rput(-2,20){\rnode{b1}{$TA$}}   
\rput(22,20){\rnode{b2}{$TB$}}  

\rput(-2,0){\rnode{c1}{$A$}}   
\rput(22,0){\rnode{c2}{$B$}}  

\psset{nodesep=3pt,labelsep=2pt,arrows=->}
\ncline{a1}{a2}\naput[npos=0.45]{{\scriptsize $TSf$}} 
\ncline{b1}{b2}\nbput{{\scriptsize $Tf$}} 
\ncline{c1}{c2}\nbput{{\scriptsize $f$}} 

\ncline{a1}{b1}\nbput{{\scriptsize $Ta_s$}} 
\ncline{b1}{c1}\nbput{{\scriptsize $a_t$}} 

\ncline{a2}{b2}\naput{{\scriptsize $Tb_s$}} 
\ncline{b2}{c2}\naput{{\scriptsize $b_t$}} 

\psset{labelsep=1.5pt}

\pnode(13,33){a3}
\pnode(7,27){b3}
\ncline[doubleline=true,arrowinset=0.6,arrowlength=0.8,arrowsize=0.5pt 2.1]{->}{a3}{b3} \naput[npos=0.4]{{\scriptsize $T\sigma$}}

\pnode(13,13){a3}
\pnode(7,7){b3}
\ncline[doubleline=true,arrowinset=0.6,arrowlength=0.8,arrowsize=0.5pt 2.1]{a3}{b3} \naput[npos=0.4]{{\scriptsize $\tau$}}

\endpspicture}

\endpspicture
\]
We need to check the two axioms for an algebra; see Appendix.

%

\end{proof}

\subsection{Transformations of algebras via distributive laws}
\label{transdl}

\begin{definition}
Let $T$ be a 2-monad on a 2-category $\cC$. Given weak maps of $T$-algebras

\[\psset{unit=0.08cm,labelsep=0pt,nodesep=2pt}
\pspicture(0,-3)(60,23)


\rput(0,0){

\rput(0,20){\rnode{a1}{$TA$}} 
\rput(20,20){\rnode{a2}{$TB$}} 

\rput(0,0){\rnode{b1}{$A$}}   
\rput(20,0){\rnode{b2}{$B$}}  

\psset{nodesep=3pt,labelsep=2pt,arrows=->}
\ncline{a1}{a2}\naput{{\scriptsize $Tf$}} 
\ncline{b1}{b2}\nbput{{\scriptsize $f$}} 
\ncline{a1}{b1}\nbput{{\scriptsize $a$}} 
\ncline{a2}{b2}\naput{{\scriptsize $b$}} 

\psset{labelsep=1.5pt}
\pnode(13,13){a3}
\pnode(7,7){b3}
\ncline[doubleline=true,arrowinset=0.6,arrowlength=0.8,arrowsize=0.5pt 2.1]{a3}{b3} \nbput[npos=0.4]{{\scriptsize $\tau_f$}}
}

\rput(40,0){

\rput(0,20){\rnode{a1}{$TA$}} 
\rput(20,20){\rnode{a2}{$TB$}} 

\rput(0,0){\rnode{b1}{$A$}}   
\rput(20,0){\rnode{b2}{$B$}}  

\psset{nodesep=3pt,labelsep=2pt,arrows=->}
\ncline{a1}{a2}\naput{{\scriptsize $Tg$}} 
\ncline{b1}{b2}\nbput{{\scriptsize $g$}} 
\ncline{a1}{b1}\nbput{{\scriptsize $a$}} 
\ncline{a2}{b2}\naput{{\scriptsize $b$}} 

\psset{labelsep=1.5pt}
\pnode(13,13){a3}
\pnode(7,7){b3}
\ncline[doubleline=true,arrowinset=0.6,arrowlength=0.8,arrowsize=0.5pt 2.1]{a3}{b3} \nbput[npos=0.4]{{\scriptsize $\tau_g$}}
}

\endpspicture\]
a transformation consists of a 2-cell
\[
\psset{unit=0.07cm,labelsep=2pt,nodesep=1pt}
\pspicture(0,-3)(20,8)

\rput(0,0){\rnode{a1}{$A$}}  
\rput(20,0){\rnode{a2}{$B$}}  

\ncarc[arcangle=45]{->}{a1}{a2}\naput{{\scriptsize $f$}}
\ncarc[arcangle=-45]{->}{a1}{a2}\nbput{{\scriptsize $g$}}

{
\rput[c](10,-2){\psset{unit=1mm,doubleline=true,arrowinset=0.6,arrowlength=0.5,arrowsize=0.5pt 2.1,nodesep=0pt,labelsep=2pt}
\pcline{->}(0,3)(0,0) \naput{{\scriptsize $\alpha$}}}}

\endpspicture
\]
such that the following cylinder diagram commutes.
\[
\psset{unit=0.06cm,labelsep=2pt,nodesep=2pt}
\pspicture(0,-9)(80,38)

%
%

\rput(0,0){

\rput(0,28){\rnode{a1}{$TA$}}  
\rput(30,28){\rnode{a2}{$TB$}}  

\rput(0,3){\rnode{b1}{$A$}}  
\rput(30,3){\rnode{b2}{$B$}}

\ncarc[arcangle=45]{->}{a1}{a2}\naput[npos=0.5]{{\scriptsize $Tf$}}
\ncarc[arcangle=-45]{->}{a1}{a2}\nbput[npos=0.5]{{\scriptsize $Tg$}}

\ncline{->}{a1}{b1} \nbput{{\scriptsize $a$}}
\ncline{->}{a2}{b2} \naput{{\scriptsize $b$}}

\ncarc[arcangle=-45]{->}{b1}{b2}\nbput[npos=0.5]{{\scriptsize $g$}}

{
\rput[c](16,28){\psset{unit=1mm,doubleline=true,arrowinset=0.6,arrowlength=0.5,arrowsize=0.5pt 2.1,nodesep=0pt,labelsep=2pt}
\pcline{->}(0,1.5)(0,-1.5) \nbput{{\scriptsize $T\alpha$}}}}

{
\rput[c](15,6.5){\psset{unit=1mm,doubleline=true,arrowinset=0.6,arrowlength=0.5,arrowsize=0.5pt 2.1,nodesep=0pt,labelsep=1pt}
\pcline{->}(1.5,1.5)(-1.5,-1.5) \nbput{{\scriptsize $\tau_g$}}}}


\rput(44,20){$=$}

}

\rput(60,0){

\rput(0,28){\rnode{a1}{$TA$}}  
\rput(30,28){\rnode{a2}{$TB$}}  

\rput(0,3){\rnode{b1}{$A$}}  
\rput(30,3){\rnode{b2}{$B$}}

\ncarc[arcangle=45]{->}{a1}{a2}\naput[npos=0.5]{{\scriptsize $Tf$}}

\ncline{->}{a1}{b1} \nbput{{\scriptsize $a$}}
\ncline{->}{a2}{b2} \naput{{\scriptsize $b$}}

%
%

\ncarc[arcangle=45]{->}{b1}{b2}\naput[npos=0.5]{{\scriptsize $f$}}
\ncarc[arcangle=-45]{->}{b1}{b2}\nbput[npos=0.5]{{\scriptsize $g$}}

{
\rput[c](15,23){\psset{unit=1mm,doubleline=true,arrowinset=0.6,arrowlength=0.5,arrowsize=0.5pt 2.1,nodesep=0pt,labelsep=1pt}
\pcline{->}(1.5,1.5)(-1.5,-1.5) \nbput{{\scriptsize $\tau_f$}}}}

{
\rput[c](15,3){\psset{unit=1mm,doubleline=true,arrowinset=0.6,arrowlength=0.5,arrowsize=0.5pt 2.1,nodesep=0pt,labelsep=2pt}
\pcline{->}(0,1.5)(0,-1.5) \nbput{{\scriptsize $\alpha$}}}}

}

\endpspicture
\]

\noi We will refer to this as a $T$-algebra transformation for short, or just $T$-transformation.  $T$-algebras, weak $T$-maps and $T$-transformations form a 2-category which we will call \twocatt{$T\cat{-Alg}_w$}.

\end{definition}

\begin{example} 
Let $T$ be the 2-monad for weak monoidal categories. Then a transformation between weak maps of $T$-algebras is a monoidal transformation. As for the weak maps, this is expressed slightly differently from the usual biased way, in that there is now a monoidality axiom for all parenthesised words and not just nullary/binary ones. However, again the concepts are the same because in the biased presentation the axioms for other arities can be derived from the nullary/binary ones. This gives us \twocatt{$T\cat{-Alg}_w$} the 2-category of monoidal categories, weakly monoidal functors, and monoidal transformations.
 \end{example}

\begin{theorem}\label{TStrans}
Let $S$ and $T$ be 2-monads on a 2-category $\cC$, and let $\lambda\: ST \Tra TS$ be a  2-categorical distributive law.  Then a transformation $\alpha$ of $TS$-algebras is precisely a 2-cell $\alpha$ that is both a transformation of $S$-algebras and a transformation of $T$-algebras (with no further axiom).

\end{theorem}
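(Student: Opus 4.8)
The plan is to match the single cylinder condition defining a $TS$-transformation against the two cylinder conditions for the $S$- and $T$-structures, exploiting the explicit decompositions already in hand. Recall from Corollary~\ref{distpair} and the subsequent remark that the composite algebra structure is $\theta_A = a_t \cdot Ta_s$, and from the converse construction in Theorem~\ref{weakTSmap} that the constraint $\phi_f$ of the associated weak $TS$-map is the pasting of $\tau_f$ (whiskered by $Ta_s$) with $T\sigma_f$ (whiskered by $b_t$); likewise for $\phi_g$. Writing $\cdot$ for vertical composite and juxtaposition for whiskering, the $TS$-transformation condition is the single equation
\[ \phi_g \cdot (\theta_B\, TS\alpha) \;=\; (\alpha\,\theta_A)\cdot \phi_f, \]
while the two component conditions to be matched are $\tau_g\cdot(b_t\,T\alpha) = (\alpha\,a_t)\cdot\tau_f$ and $\sigma_g\cdot(b_s\,S\alpha) = (\alpha\,a_s)\cdot\sigma_f$.

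For the direction \emph{component transformations give a $TS$-transformation}, I would substitute the decompositions of $\phi_g$ and of $\theta_B = b_t\cdot Tb_s$ into the left-hand side, collect the factor $Tb_s\, TS\alpha$ into $T(b_s\,S\alpha)$ using $2$-functoriality of $T$, and then apply $T$ to the $S$-cylinder equation to rewrite $T\big(\sigma_g\cdot(b_s\,S\alpha)\big)$ as $T\big((\alpha\,a_s)\cdot\sigma_f\big) = (T\alpha\,Ta_s)\cdot T\sigma_f$. After re-bracketing by interchange this isolates the factor $\tau_g\cdot(b_t\,T\alpha)$, to which the $T$-cylinder equation applies, yielding precisely $(\alpha\,\theta_A)\cdot\phi_f$. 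Crucially, no interaction axiom relating $\sigma$ and $\tau$ is consumed anywhere in this computation; this is exactly the reason the theorem asserts ``with no further axiom''.

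For the converse I would restrict the $TS$-condition along the two monad morphisms $T\eta^S\colon T\Tra TS$ and $\eta^TS\colon S\Tra TS$ noted in the proof of Theorem~\ref{weakTSmap}. Whiskering the $TS$-condition on the right by $T\eta^S_A$ and using the $S$-algebra unit axiom $a_s\cdot\eta^S_A = 1$ gives $\theta_A\cdot T\eta^S_A = a_t$; the $2$-naturality of $\eta^S$ gives $TS\alpha\, T\eta^S_A = T\eta^S_B\, T\alpha$, whence $\theta_B\,(T\eta^S_B\,T\alpha) = b_t\,T\alpha$; and by construction $\phi_f\, T\eta^S_A = \tau_f$, $\phi_g\, T\eta^S_A = \tau_g$. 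These identifications collapse the whiskered equation to exactly the $T$-cylinder condition. Symmetrically, whiskering by $\eta^T_{SA}$ and invoking the $T$-algebra unit axiom together with the $2$-naturality of $\eta^T$ (and $\phi_f\,\eta^TS = \sigma_f$) produces the $S$-cylinder condition.

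I expect the main obstacle to be purely the bookkeeping of the whiskering and interchange manipulations in the forward (pasting) direction, keeping careful track of which $2$-cell is whiskered on which side and applying $T$ as a $2$-functor correctly, rather than any genuine mathematical difficulty. Since the $2$-monads, the distributive law and the algebra structures are all strict, every face in the relevant diagrams other than the two component constraints is a strict naturality square, so the argument is most transparently carried out in string diagrams, as elsewhere in this paper, and I would relegate the full pasting verification to the appendix.
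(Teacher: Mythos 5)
Your proposal is correct and follows essentially the same route as the paper: one direction by restricting along the monad morphisms $T\eta^S$ and $\eta^T S$ (exactly as the paper does), and the other by substituting the decomposition $\phi = (\tau\, Ta_s)\cdot(b_t\, T\sigma)$ and applying the two component cylinders, which is the verification the paper dismisses as ``straightforward to check.'' Your explicit computations, including the observation that no interaction axiom between $\sigma$ and $\tau$ is consumed, are a faithful filling-in of that proof.
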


\begin{proof}

Consider a transformation of $TS$-algebras.  As above, the monad functors $TS \Tra S$ and $TS \Tra T$ give us a transformation of $S$-algebras and $T$-algebras respectively.  Conversely suppose we have a 2-cell $\alpha$ that satisfies both the cylinder for $T$ and the cylinder for $S$; it is straightforward to check it then satisfies the cylinder for $TS$.


\end{proof}



\section[Weak maps and transformations of doubly-degenerate \newline \bicats-categories]{Weak maps and transformations of \texorpdfstring{\\}{} doubly-degenerate \bicats-categories}

\label{three}

In this section we unravel the definitions of weak map and transformation in our case of interest, and go on to characterise these with reference only to the vertical monoidal structures. This is the technical content of the comparison with braided monoidal categories.

\subsection{Weak maps by definition}

In this section we unravel the definition of weak map in the case of \ddbicatscats. 

%
%
%

Recall (Section~\ref{examplebicatscat}) that we construct \bicatscats\ from 2-monads $V$ for vertical composition and $H$ for horizontal composition, equipped with a distributive law of $V$ over $H$. These are monads on the 2-category \cat{Cat-Gph-Gph}, so in the \dd\ case the underlying data is just a category.  Then
\begin{itemize}
\item a $V$-algebra structure is a (weak) monoidal structure, and
\item an $H$-algebra structure is a (strict) monoidal structure.
\end{itemize}


\noi As in \wvc\ we will write the $V$-algebra (weak monoidal) structure vertically as $\frac{a}{b}$ and the $H$-algebra (strict monoidal) structure horizontally as $a\|b$. As we have strict interchange we can combine these in grids without ambiguity, for example
\[\begin{array}{c|c}
a & b \\
\myhline c& d
\end{array}\]
Furthermore, we will often ignore issues of associativity in the vertical direction because, as long as we are not simultaneously interacting with associativity in the horizontal direction, coherence means that the diagram of vertical associators will commute.  (We know from \cite{koc3} that care must be taken about the interaction between horizontal and vertical associativity.) Thus we will work with larger grids such as
\[\begin{array}{c|c}
a & b \\
\myhline c& d \\
\myhline e& f
\end{array}\]
but will only increase the height when the width remains at two. By interchange we know that the horizontal and vertical unit are the same, and we write it as 1; we will sometimes write it as an empty space in a grid formation.  We will write the left and right unit constraints for the horizontal tensor product as follows (with $\lambda$ and $\rho$ for ``left'' and ``right''):
\[\begin{array}{rcl}
\htp{1}{a} & \mtmap{\lambda} & a \\
\htp{a}{1} & \mtmap{\rho} & a
\end{array}\]
and those for the vertical tensor product as follows (with $\tau$ and $\beta$ for ``top'' and ``bottom''):
\[\begin{array}{rcl}
\vtp{1}{a} & \mtmap{\tau} & a \\
\vtp{a}{1} & \mtmap{\beta} & a
\end{array}\]

\begin{definition}
We define a weak map of \ddbicatscats\ to be a weak map of their $HV$-algebra structures.
\end{definition}

\noi We can now use the results of the previous section to characterise these weak maps via the horizontal and vertical monoidal structures.

\begin{proposition}
A weak map $X\tmap{F} Y$ of \ddbicatscats\ is a functor on the underlying categories equipped with:

\begin{itemize}

\item A vertical monoidality constraint: for all $a, b \in X$ an isomorphism
\[\frac{Fa}{Fb} \mtmapiso{v_{ab}} F\left(\frac{a}{b}\right) \]
natural in $a$ and $b$; we will usually omit the subscripts.

\item A horizontal monoidality constraint: for all $a, b \in X$ an isomorphism
\[Fa \| Fb \mtmapiso{h_{ab}} F(a\|b) \]
natural in $a$ and $b$; again we will usually omit the subscripts.

\item A unit constraint: an isomorphism $1 \tmapiso{\eta} F1$. 

\end{itemize}
These must satisfy the usual axioms for monoidal functors, plus the interaction axiom:

\[\setlength{\arraycolsep}{3pt}
\psset{unit=0.08cm,labelsep=3pt,nodesep=5pt}
\pspicture(-10,-24)(10,24)



\rput(-20,16){\rnode{a1}{$\begin{array}{c|c}
Fa & Fb \\
\myhline Fc & Fd
\end{array}
$}}  

\rput(20,16){\rnode{a2}{$\begin{array}{c}
F(a\|b) \\
\myhline F(c\|d)
\end{array}
$}}  

\rput(-20,-16){\rnode{a3}{$\begin{array}{c|c}
F\left(\frac{a}{c}\right) & F\left(\frac{b}{d}\right) 
\end{array}
$}}  

\rput(20,-16){\rnode{a4}{$F\left(\begin{array}{c|c}
a & b \\
\myhline c& d
\end{array}\right)
$}}  

\ncline{->}{a1}{a2} \naput{{\scriptsize $\frac{h}{h}$}} 
\ncline{->}{a3}{a4} \nbput{{\scriptsize $h$}} 
\ncline{->}{a1}{a3} \nbput{{\scriptsize $v\|v$}} 
\ncline{->}{a2}{a4} \naput{{\scriptsize $v$}} 

\endpspicture
\]
\end{proposition}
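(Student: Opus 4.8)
The plan is to apply Theorem~\ref{weakTSmap} with $S = V$ and $T = H$, using the distributive law $\VH \Tra HV$ constructed in Section~\ref{one}. By the correspondence of Section~\ref{examplebicatscat}, a \ddbicatscat\ is an $HV$-algebra whose underlying \cat{Cat-Gph-Gph} is, in the \dd\ case, just a category; its $V$-algebra structure is a weak monoidal (``vertical'') structure and its $H$-algebra structure is a strict monoidal (``horizontal'') structure. Theorem~\ref{weakTSmap} then tells us that a weak map of $HV$-algebras $X \ltra Y$ is precisely a $1$-cell $F$ --- here a functor on the underlying categories --- together with a weak map $\sigma$ of the $V$-structures and a weak map $\tau$ of the $H$-structures, subject to the single interaction axiom of that theorem.

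Next I would unravel $\sigma$ and $\tau$ into biased monoidal-functor data. Since a $V$-algebra is a weak monoidal category, $\sigma$ is a weak map of algebras for the 2-monad whose strict algebras are weak monoidal categories; as in the Example following the definition of weak map of algebras, such a weak map is exactly a weak monoidal functor. In biased form this yields the vertical monoidality constraint $v_{ab}\colon \frac{Fa}{Fb} \to F(\frac{a}{b})$ together with a unit constraint $1 \to F1$, and the weak-map axioms collapse the \emph{a priori} coherence cell for every parenthesised word down to these binary and nullary cells, recovering exactly the usual associativity and unit axioms for a monoidal functor. The same analysis applied to the strict monoidal $H$-structure produces the horizontal constraint $h_{ab}\colon Fa\|Fb \to F(a\|b)$ and a unit constraint $1 \to F1$. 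Naturality of $v$ and $h$ in $a$ and $b$ is just the naturality of the constraint 2-cells $\sigma$ and $\tau$ read off componentwise.

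Two points then remain. First, the statement lists a single unit constraint $\eta$, so I must show that the unit constraints arising from $\sigma$ and from $\tau$ agree. This is forced because, by interchange, the horizontal and vertical units are the same object $1$; specialising the interaction axiom to units (or, more directly, combining the unit axioms of the two monoidal functors with the coincidence of units) identifies the two cells, leaving a single $\eta\colon 1 \to F1$. Second, I must translate the abstract interaction axiom of Theorem~\ref{weakTSmap} into the displayed square. This is done by evaluating that pasting equation on the generic operation of $HV$ applied to a $2\times 2$ grid of objects $a,b,c,d$: there $\lambda$ is the interchange isomorphism, $H\sigma$ contributes the edge $\frac{h}{h}$ and $V\tau$ the edge $v\|v$, and the abstract equation becomes exactly the commuting square relating $\frac{h}{h}$, $h$, $v\|v$ and $v$.

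The main obstacle I anticipate is this last translation: one must check that the single displayed square, read on a $2\times 2$ grid, really is equivalent to the full interaction axiom for all arities and all grid shapes. As with the biased/unbiased reduction for the monoidality constraints themselves, this follows by coherence, the higher-arity interaction cells being forced to be composites of the binary one; but verifying that the degenerate evaluations of $\lambda$, $H\sigma$ and $V\tau$ line up on the nose with the edges $\frac{h}{h}$, $h$, $v\|v$ and $v$ is the one genuinely computational step, and is precisely where strictness of interchange and of the horizontal structure is used.
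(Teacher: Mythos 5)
Your proposal follows essentially the same route as the paper's proof: invoke Theorem~\ref{weakTSmap} with $S=V$, $T=H$ to split a weak $HV$-map into a weak $V$-map and weak $H$-map with interaction, read off the biased monoidal-functor data, observe that the interaction axiom forces the two unit constraints to coincide, and reduce the general (all-arities) interaction axiom to the nullary and $2\times 2$ cases. The one step you flag as the genuine obstacle --- recovering the general interaction axiom from the $2\times 2$ square --- is exactly what the paper handles, by a double induction on arities, so your sketch matches the published argument both in structure and in level of detail.
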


Note that is essentially a ``biased'' presentation of the interaction axiom in the previous section.

\begin{proof}

By Theorem~\ref{weakTSmap} we know that a weak $HV$-map is equivalently a map with the structure of both a weak $H$-map and a weak $V$-map satisfying the interaction axiom.  Thus we get vertical monoidality constraints, horizontal monoidality constraint, and a general interaction axiom (for all arities in both directions). The general interaction axiom tells us in particular that the horizontal and vertical unit constraints $1 \tra F1$ must coincide, and that the 2-by-2 interaction axiom above must hold. Conversely, starting from the interaction axiom for arity 0 and 2-by-2, we can prove the general interaction axiom by double induction. 
\end{proof}

\subsection{Weak maps in terms of vertical structure}

We are now going to use a weak Eckmann--Hilton argument to re-characterise these weak maps further, eliminating the reference to $H$. First recall from \wvc\ that a \ddbicatscat\ has the structure of a \bmc\ with respect to its vertical tensor product, with braiding given by a weak Eckmann--Hilton argument as follows:
\[\psset{unit=1.1mm}
\pspicture(0,-5)(0,85)

\rput(0,80){\rnode{a1}{\vtp{a}{b}}}

\rput(0,60){\rnode{a2}{\fourtp{}{a}{b}{}}}

\rput(0,40){\rnode{a3}{\htp{b}{a}}}

\rput(0,20){\rnode{a4}{\fourtp{b}{}{}{a}}}

\rput(0,0){\rnode{a5}{\vtp{b}{a}}}

\psset{arrows=->, nodesep=6pt,labelsep=4pt}
\ncline[nodesepA=3pt, arrows=-, doubleline=true]{a1}{a2} \naput[labelsep=4pt]{\parbox{10em}{\sf\scr strict horizontal units\\ strict interchange}}
\ncline{a2}{a3} \naput{\sf\scr weak vertical units}\nbput[nrot=:U,labelsep=2pt]{\scr $\sim$}
\ncline{a3}{a4} \naput{\sf\scr weak vertical units}\nbput[nrot=:U,labelsep=2pt]{\scr $\sim$}
\ncline[nodesepB=4pt, arrows=-, doubleline=true]{a4}{a5} \naput[labelsep=4pt]{\parbox{10em}{\sf\scr strict horizontal units\\ strict interchange}}

\endpspicture\]
Note that this involves a choice of orientation; we will keep the above ``clockwise'' orientation throughout. As we will invoke this repeatedly we will express it in terms of the following maps:
\[\alpha\: \htp{a}{b} \ltra \vtp{a}{b}\]
and
\[\ol\alpha\: \htp{a}{b} \ltra \vtp{b}{a}\]
We can think of $\alpha$ as being clockwise and $\ol\alpha$ as anti-clockwise.  Then the braiding in the orientation we have chosen above is given by
\[\gamma := \hs{0.4} \vtp{a}{b} \ltmap{\ol\alpha^{-1}} \htp{b}{a} \ltmap{\alpha} \vtp{b}{a}\]
We will refer to this as the \emph{standard braiding}.

Note that $\alpha$ and $\ol\alpha$ are built from unit constraints so are natural in $a$ and $b$.  Another key result we need encapsulates the usual way in which we extract a braiding from interchange.

\begin{proposition}\label{alphakey}
The following diagram commutes.

\[
\psset{unit=0.1cm,labelsep=2pt,nodesep=3pt,npos=0.4}
\pspicture(0,-30)(30,30)

\rput[B](0,0){\Rnode{a1}{\fourtp{a}{b}{c}{d}}}  
\rput[B](30,20){\Rnode{a2}{\fourvtp{a}{b}{c}{d}}}  
\rput[B](30,-20){\Rnode{a3}{\fourvtp{a}{c}{b}{d}}}  

\ncline[nodesepB=6pt]{->}{a1}{a2} \naput[npos=0.5,labelsep=0pt]{{\scr \vtp{\alpha_{ab}}{\alpha_{cd}}}} 
\ncline[nodesepB=6pt]{->}{a1}{a3} \nbput[npos=0.5]{{\scr $\alpha_{\scalebox{0.7}{${\left(\vtp{a}{c}\right)\hs{-0.3}\left(\vtp{b}{d}\right)}$}}$}} 
\ncline[labelsep=2pt]{->}{a2}{a3} \naput{\scr \threevtp{1_a}{\gamma_{bc}}{1_d}}

\endpspicture
\]

\end{proposition}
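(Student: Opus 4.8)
The plan is to reduce both legs of the square to sequences of elementary moves built only from the weak vertical unit constraints $\tau,\beta$ together with the strict structure (strict horizontal units, strict interchange, and the coherent vertical associativity), and then to match them. First I would unfold every arrow. Writing the vertical tensor as $\frac{\cdot}{\cdot}$ and the horizontal one as $\|$, recall that
\[ a\|b \xrightarrow{\ \beta_a^{-1}\|\tau_b^{-1}\ } \left(\frac{a}{1}\right)\|\left(\frac{1}{b}\right) = \frac{a}{b} \]
is the map $\alpha_{ab}$ (the final equality being strict, via horizontal units and interchange), while $\ol\alpha_{ab}\:a\|b\to\frac{b}{a}$ inserts the units on the opposite sides. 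The middle leg is unfolded through $\gamma_{bc}=\alpha_{cb}\circ\ol\alpha_{bc}^{-1}$, so it passes through $c\|b$. After this unfolding every map in the square is a pasting of unit-constraint insertions and strict re-readings of grids, and the claimed identity becomes a purely ``unit-theoretic'' one.

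The organizing observation is that both composites can be arranged to factor through the configuration $\frac{a}{\frac{c\|b}{d}}$ --- the four-fold vertical stack in which $c$ and $b$ have been brought into a common row as the horizontal composite $c\|b$ but not yet pulled apart --- with the final leg in each case given by $\alpha_{cb}$ on that middle strand. For the top-then-right composite this is visible directly: stacking $\alpha_{ab}$ over $\alpha_{cd}$ produces $\frac{\frac{a}{b}}{\frac{c}{d}}$, and $\ol\alpha_{bc}^{-1}$ on the middle re-reads $\frac{b}{c}$ strictly as $\left(\frac{1}{c}\right)\|\left(\frac{b}{1}\right)$ and cancels the units by $\tau_c,\beta_b$, landing in $\frac{a}{\frac{c\|b}{d}}$ before the final $\alpha_{cb}$. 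For the diagonal $\alpha_{\left(\frac{a}{c}\right)\left(\frac{b}{d}\right)}$ the same intermediate appears only after the coherence step below. It then suffices to compare the two maps from the grid into $\frac{a}{\frac{c\|b}{d}}$, each now built from vertical unit constraints and strict moves alone.

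The diagonal map a priori uses the vertical unit constraints $\beta_{\frac{a}{c}}$ and $\tau_{\frac{b}{d}}$ of the \emph{composite} objects $\frac{a}{c}$ and $\frac{b}{d}$; I would rewrite these in terms of the component constraints by the standard triangle-type coherence identities
\[ \beta_{\frac{a}{c}} = \Big(\tfrac{\frac{a}{c}}{1}\cong \tfrac{a}{\frac{c}{1}}\xrightarrow{\ \frac{1_a}{\beta_c}\ }\tfrac{a}{c}\Big),\qquad \tau_{\frac{b}{d}} = \Big(\tfrac{1}{\frac{b}{d}}\cong \tfrac{\frac{1}{b}}{d}\xrightarrow{\ \frac{\tau_b}{1_d}\ }\tfrac{b}{d}\Big). \]
This converts the two ``global'' unit insertions of the diagonal into insertions around $c$ (below) and $b$ (above), at the granularity of the left-hand side, after which strict interchange merges the $c$-row and the $b$-row into $c\|b$ exactly as needed. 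With both maps into $\frac{a}{\frac{c\|b}{d}}$ expressed componentwise, the square closes by a routine chase: strict interchange (bifunctoriality of the two tensors together with the strict interchange law) reorders insertions occurring in disjoint rows, and naturality of $\tau,\beta$ slides a constraint acting on one region past a map acting on another.

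The hard part is the coherence step, not the final chase: one must handle the unit constraints of the composite objects $\frac{a}{c}$ and $\frac{b}{d}$ correctly and keep the vertical associators coherent throughout. Here the warning of \cite{koc3} about the interaction of horizontal and vertical associativity does not bite, since all associativity used stays purely vertical and the vertical structure is an ordinary coherent monoidal category; nevertheless the bookkeeping of which units are inserted where, and in which bracketing, is exactly what string diagrams make transparent. I would therefore present the explicit verification diagrammatically in the appendix, as with the other proofs of this section.
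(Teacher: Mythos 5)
Your factorisation of the top-then-right composite through $G=\frac{a}{\frac{c\|b}{d}}$ is correct: by the very definition of $\gamma$, its final leg is $\threevtp{1_a}{\alpha_{cb}}{1_d}$. The gap is the parallel claim for the diagonal. Carry your own coherence step through and see what it actually yields: writing $\beta_{\frac{a}{c}}^{-1}$ as $\frac{1_a}{\beta_c^{-1}}$ followed by a vertical associator, and $\tau_{\frac{b}{d}}^{-1}$ as $\frac{\tau_b^{-1}}{1_d}$ followed by a vertical associator, the diagonal becomes: first the componentwise insertion of $\beta_c^{-1}$ and $\tau_b^{-1}$, which after strict interchange lands in $G$; and then a residual map $G\to\fourvtp{a}{c}{b}{d}$ consisting of vertical associators and strict re-readings \emph{only}. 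That residual map inserts no unit constraints, whereas $\threevtp{1_a}{\alpha_{cb}}{1_d}$ inserts $\beta_c^{-1}$ and $\tau_b^{-1}$ afresh; so ``the final leg in each case is $\alpha_{cb}$'' is not a consequence of your rewriting but an additional identity between two maps out of $G$, which you never prove. Nor is it a minor one: since every map in sight is invertible, once the two into-$G$ maps are known to agree (that part of your argument is fine, by columnwise vertical coherence pushed through interchange), the identity ``residual leg $=\threevtp{1_a}{\alpha_{cb}}{1_d}$'' is logically \emph{equivalent} to the proposition being proved. As structured, your proof assumes its conclusion at the decisive step.

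The missing identity can be proved, but not with the tools you list (bifunctoriality, the interchange law, naturality of $\tau,\beta$, and pure vertical coherence). You also need that the horizontal tensor strictly preserves the vertical constraint cells --- for instance $\beta_{x\|y}=\beta_x\|\beta_y$, and similarly for $\tau$ and the vertical associator, under the strict identification $\left(\frac{x}{1}\right)\hs{-0.1}\|\hs{-0.1}\left(\frac{y}{1}\right)=\frac{x\|y}{1}$ --- a fact coming from the enrichment (composition is a strict functor on homs), equivalently from the distributive law. This is what lets every map be split columnwise and finished by ordinary monoidal coherence in the vertical structure; it is also exactly where the Kock-type care you waved away is needed, since your coherence step creates vertical associators that must then be pushed through interchange, i.e.\ vertical associativity interacting with the horizontal structure. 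The paper's proof is engineered to avoid all of this: it never takes unit constraints of composite objects and never introduces associators; its intermediate grids are chosen so that every face is either a triangle involving only one type of unit constraint (commuting by coherence of one structure at a time) or a functoriality square. Either reorganise your argument along those lines, or isolate the preservation of vertical constraint cells by $\|$ as a lemma and use it to justify the diagonal's factorisation; as written, the proposal does not close.
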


\begin{proof}
The diagram in question is the outside of the diagram below.
\[\begin{small}
\psset{unit=0.1cm,labelsep=2pt,nodesep=6pt,npos=0.5,arrows=->}
\pspicture(0,-60)(70,60)

\rput[B](0,0){\Rnode{a1}{\fourtp{a}{b}{c}{d}}}  
\rput[B](28,20){\Rnode{b1}{\eighttp{a}{}{}{b}{c}{}{}{d}}}  
\rput[B](70,20){\Rnode{b2}{\onefouronetp{a}{}{b}{c}{}{d}}}  

\rput[B](70,52){\Rnode{a2}{\fourvtp{a}{b}{c}{d}}}  

\rput[B](70,-52){\Rnode{a3}{\fourvtp{a}{c}{b}{d}}}  

\rput[B](28,-20){\Rnode{c1}{\eighttp{a}{}{c}{}{}{b}{}{d}}}  

\rput[B](70,-20){\Rnode{c2}{\onefouronetp{a}{c}{}{}{b}{d}}}  

\ncline{a1}{b1} \naput{\scr\sf vertical units}
\ncline{b1}{a2} \naput{\scr\sf horizontal units}
\ncline{b1}{b2}  \naput{\scr\sf horizontal units}
\ncline{a2}{b2} \naput{\scr\sf horizontal units}
\ncline{a1}{c1} \nbput{\scr\sf vertical units}
\ncline{c1}{c2} \naput{\scr\sf horizontal units}
\ncline{b2}{c2}  \naput{\scr\sf vertical units}
\ncline{b1}{c1} \naput{\scr\sf vertical units}
\ncline{c1}{a3} \nbput{\scr\sf horizontal units}
\ncline{c2}{a3} \naput{\scr\sf horizontal units}


\endpspicture
\end{small}
\]
We see that each triangle involves only one type of unit constraint and so commutes by coherence. The square commutes by functoriality of vertical tensor product. 

\end{proof}

We are now ready to characterise the weak maps of $HV$-algebras in terms of just the vertical structure, that is, in terms of $V$-structure and the braiding.  In some sense the following proposition is just a corollary of \cite[Theorem 2.14]{cg3}, where it was proved in much greater generality (for fully weak tricategories). However, as that proof involved long coherence calculations we deem it worthwhile to include a direct proof here. First we show that a weak map of $HV$-algebras is a braided monoidal functor with respect to the $V$-structure; afterwards we will show that given a braided monoidal functor, a weak $HV$-map giving rise to it can be reconstructed. 

\begin{proposition}\label{fbraided}
Let $(F, v, h)\: X \tra Y$ be a weak map of \ddbicatscats. Then $F$ is a braided monoidal functor with respect to the vertical tensor product and the standard braiding.  That is, the following diagram commutes:

\[\begin{small}
\psset{unit=0.1cm,labelsep=2pt,nodesep=3pt,arrows=->}
\pspicture(-10,-14)(10,14)



\rput(-10,10){\rnode{a1}{$\vtp{Fa}{Fb}$}}  
\rput(10,10){\rnode{a2}{$F\left(\vtp{a}{b}\right)$}}  
\rput(-10,-10){\rnode{a3}{\vtp{Fb}{Fa}}}  
\rput(10,-10){\rnode{a4}{$F\left(\vtp{b}{a}\right)$}}  

\ncline{->}{a1}{a2} \naput{{\scriptsize $v$}} 
\ncline{->}{a3}{a4} \nbput{{\scriptsize $v$}} 
\ncline{->}{a1}{a3} \nbput{{\scriptsize $\gamma$}} 
\ncline{->}{a2}{a4} \naput{{\scriptsize $F\gamma$}} 

\endpspicture\end{small}
\]

\end{proposition}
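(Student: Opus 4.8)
The plan is to reduce the braiding axiom to a single compatibility statement between $F$ and the Eckmann--Hilton maps $\alpha$ and $\ol\alpha$, after which the square drops out by a one-line manipulation. Write $v$ and $h$ for the vertical and horizontal monoidality constraints of $F$ (both invertible by the characterisation of weak maps above), and recall that the standard braiding is $\gamma_{ab} = \alpha_{ba}\circ\ol\alpha_{ba}\inv$, where $\alpha_{ba}\: \htp{b}{a}\to\vtp{b}{a}$ and $\ol\alpha_{ba}\: \htp{b}{a}\to\vtp{a}{b}$ are built entirely from the horizontal and vertical unit constraints, interchange being strict.

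The heart of the argument is the claim that $F$ is compatible with $\alpha$ and $\ol\alpha$: for all $a,b$ the squares
\[v\circ\alpha = F\alpha\circ h, \qquad v\circ\ol\alpha = F\ol\alpha\circ h\]
commute, where in the first square the top edge is $\alpha\: \htp{Fa}{Fb}\to\vtp{Fa}{Fb}$ (in $Y$), the left edge is $h$, the bottom edge is $F\alpha$ (with $\alpha$ taken in $X$), and the right edge is $v$; similarly for $\ol\alpha$. To prove this I would expand $\alpha$ into its defining zig-zag through the grid $\fourtp{a}{}{}{b}$: a string of inverse vertical unit constraints opening $\htp{a}{b}$ out into the grid, followed by a string of horizontal unit constraints collapsing the grid down to $\vtp{a}{b}$. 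The functor $F$ meets this grid through \emph{both} its $h$- and its $v$-constraints, and the only nontrivial point is that the two ways of assembling $F$ on the $2\times2$ grid agree --- which is exactly the interaction axiom of the preceding proposition (equivalently Theorem~\ref{weakTSmap}). Pasting the interaction axiom together with the monoidal-functor unit axioms for each structure (compatibility of $F$ with $\lambda,\rho,\tau,\beta$), the naturality of $v$ and $h$, and the coincidence $1\xrightarrow{\eta}F1$ of the two unit constraints established above, yields the first square; the second is identical with the anti-diagonal grid $\fourtp{}{a}{b}{}$ in place of $\fourtp{a}{}{}{b}$.

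Granting the claim, the proposition follows by computation. Applying the two squares at the pair $(b,a)$ and using invertibility of $v$, $h$ and $\ol\alpha$:
\begin{align*}
v\circ\gamma &= v\circ\alpha_{ba}\circ\ol\alpha_{ba}\inv \\
&= F\alpha_{ba}\circ h\circ\ol\alpha_{ba}\inv \\
&= F\alpha_{ba}\circ F\ol\alpha_{ba}\inv\circ v \\
&= F\bigl(\alpha_{ba}\circ\ol\alpha_{ba}\inv\bigr)\circ v = F\gamma\circ v,
\end{align*}
where the second equality is the first square, the third is the inverted second square, and the last rewrites $\gamma$ by its definition. This is precisely the commuting square asserted.

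The main obstacle is the claim, i.e.\ checking that $F$ is compatible with $\alpha$ and $\ol\alpha$. All the content is in correctly threading the $2\times2$ interaction axiom through the grid while collapsing the unit constraints; in pasting-diagram form this is unenlightening, so I would carry it out with string diagrams in the appendix (as with the other proofs of this section). It is worth stressing that the argument goes through smoothly precisely because horizontal units and interchange are strict, so that $\alpha$ and $\ol\alpha$ carry no associativity or horizontal-unit weakness to track --- this is the technical payoff of weakening only vertical composition, and is exactly why the present case avoids the coherence difficulties of the fully weak Eckmann--Hilton argument.
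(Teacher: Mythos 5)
Your proposal is correct and is essentially the paper's own argument, just reorganized: the two compatibility squares $v\circ\alpha = F\alpha\circ h$ and $v\circ\ol\alpha = F\ol\alpha\circ h$ that you isolate as a lemma are precisely the bottom and top halves of the single pasting diagram in the paper's proof, built from the same ingredients you list (the $2\times 2$ interaction axiom, the unit axioms for $v$ and $h$, naturality, the shared unit constraint $\eta$, and strictness of horizontal units), and indeed the paper itself extracts your first square afterwards as Proposition~\ref{hfromv} and your second in the remark following Proposition~\ref{fweakmap}. Proving the halves first and then composing them by your four-line formal computation is equivalent to the paper's direct pasting, so the difference is packaging rather than substance.
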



\begin{proof}

The diagram can be seen to commute as shown below. Here ``unit'' refers to the unit axiom for a monoidal functor.
\[\begin{small}
\psset{unit=0.12cm,labelsep=2pt,nodesep=3pt, arrows=->}
\pspicture(-10,-64)(10,64)



\rput[B](-40,60){\Rnode{a1}{$\vtp{Fa}{Fb}$}}
\rput(-9,49){\rnode{a2}{$\vtp{F(1 \| a)}{F(b \| 1 )}$}}
\rput[B](40,60){\Rnode{a3}{$F\left(\vtp{a}{b}\right)$}}

\rput(-40,30){\rnode{b1}{$\fourtp{1}{Fa}{Fb}{1}$}}
\rput(-9,11){\rnode{b2}{$\htp{F\hs{-0.2}\left(\vtp{1}{b}\right)}{F\hs{-0.2}\left(\vtp{a}{1}\right)}$}}
\rput(40,30){\rnode{b3}{$F\hs{-0.2}\left(\fourtp{1}{a}{b}{1}\right)$}}

\rput(-40,0){\rnode{e1}{$\htp{Fb}{Fa}$}}
\rput(40,0){\rnode{e3}{$F(\htp{b}{a})$}}

\rput(-9,30){\rnode{f}{$\fourtp{F1}{Fa}{Fb}{F1}$}}
\rput(-9,-30){\rnode{g}{$\fourtp{Fb}{F1}{F1}{Fa}$}}

\rput(-40,-30){\rnode{c1}{$\fourtp{Fb}{1}{1}{Fa}$}}
\rput(-9,-11){\rnode{c2}{$\htp{F\hs{-0.2}\left(\vtp{b}{1}\right)}{F\hs{-0.2}\left(\vtp{1}{a}\right)}$}}
\rput(40,-30){\rnode{c3}{$F\hs{-0.2}\left(\fourtp{b}{1}{1}{a}\right)$}}

\rput(-40,-60){\rnode{d1}{$\vtp{Fb}{Fa}$}}
\rput(-9,-49){\rnode{d2}{$\vtp{F(b \| 1 )}{F(1 \| a)}$}}
\rput(40,-60){\rnode{d3}{$F\hs{-0.2}\left(\vtp{b}{a}\right)$}}

\ncline{a1}{a3} \naput{\scr $v$}
\ncline{a1}{a2} \nbput{\scr $\vtp{F\lambda^{-1}}{F\rho^{-1}}$}
\ncline{a1}{b1} \nbput{\scr $\vtp{\lambda^{-1}}{\rho^{-1}}$}

\ncline{a3}{b3} \naput{\scr $F\left(\vtp{\lambda^{-1}}{\rho^{-1}}\right)$}
\ncline{a2}{b3} \naput{\scr $v$}
\ncline{f}{a2} \naput{\scr $\vtp{h}{h}$}
\ncline{b1}{f} \nbput{\scr $\fourtp{\eta}{1}{1}{\eta}$}
\ncline{f}{b2} \nbput{\scr $\htp{v}{v}$}
\ncline{b2}{b3} \nbput{\scr $h$}
\ncline{b1}{e1} \nbput{\scr $\htp{\tau}{\beta}$}
\ncline[nodesepA=-2pt]{b2}{e1} \nbput[labelsep=-2pt]{\scr $\htp{F\tau}{F\beta}$}
\ncline{e1}{c1} \nbput{\scr $\htp{\beta^{-1}}{\tau^{-1}}$}
\ncline[nodesepB=-2pt]{e1}{c2} \nbput[labelsep=1pt]{\scr $\htp{F\beta^{-1}}{F\tau^{-1}}$}
\ncline{c2}{c3} \naput{\scr $h$}
\ncline{g}{c2} \naput{\scr $\htp{v}{v}$} 
\ncline{c1}{g} \naput{\scr $\fourtp{1}{\eta}{\eta}{1}$}
\ncline{g}{d2} \nbput{\scr $\vtp{h}{h}$}
\ncline{c1}{d1} \nbput{\scr $\vtp{\rho}{\lambda}$}
\ncline{d1}{d2} \naput{\scr $\vtp{F\rho}{F\lambda}$}
\ncline{d1}{d3} \nbput{\scr $v$}
\ncline{d2}{c3}\nbput{\scr $v$}

\ncline{e1}{e3} \naput{\scr $h$}
\ncline{b3}{e3} \naput{\scr $F(\htp{\tau}{\beta})$}
\ncline{e3}{c3} \naput{\scr $F(\htp{\beta^{-1}}{\tau^{-1}})$}
\ncline{c3}{d3} \naput{\scr $F\left(\vtp{\rho}{\lambda}\right)$}

\rput(21, 52){\parbox{10em}{\small\sf\bc naturality\\of $v$\ec}}
\rput(21, -52){\parbox{10em}{\small\sf\bc naturality\\of $v$\ec}}

\rput(21, 8){\parbox{10em}{\small\sf\bc naturality\\of $h$\ec}}
\rput(21, -8){\parbox{10em}{\small\sf\bc naturality\\of $h$\ec}}

\rput(12, 30){\parbox{10em}{\small\sf\bc $v$--$h$\\ interaction\ec}}
\rput(12, -30){\parbox{10em}{\small\sf\bc $v$--$h$\\ interaction\ec}}

\rput(-26, 40){\vtp{\mbox{\sf unit}}{\mbox{\sf unit}}}
\rput(-26, -40){\vtp{\mbox{\sf unit}}{\mbox{\sf unit}}}

\rput(-26, 16){\htp{\mbox{\sf unit}}{\mbox{\sf unit}}}
\rput(-26, -16){\htp{\mbox{\sf unit}}{\mbox{\sf unit}}}


\endpspicture\end{small}
\]

\end{proof}

Note that the diagram in the above proof splits into two halves (top and bottom) and the halves are themselves key for what follows, showing that, given a weak $HV$-map, the $h$ constraint can always be derived from $v$. 


\begin{proposition}\label{hfromv}
Let $(F, v, h)$ be a weak map of \ddbicatscats\ $X \tra Y$. It follows from the above proof that the following diagram commutes, showing that $h$ can be derived from $v$ (as $\alpha$ and thus $F\alpha$ are invertible):

\[\begin{small}
\psset{unit=0.1cm,labelsep=2pt,nodesep=3pt,arrows=->}
\pspicture(-10,-14)(10,14)



\rput(-10,10){\rnode{a1}{$\htp{Fa}{Fb}$}}  
\rput(10,10){\rnode{a2}{$F\left(\htp{a}{b}\right)$}}  
\rput(-10,-10){\rnode{a3}{\vtp{Fa}{Fb}}}  
\rput(10,-10){\rnode{a4}{$F\left(\vtp{a}{b}\right)$}}  

\ncline{->}{a1}{a2} \naput{{\scriptsize $h$}} 
\ncline{->}{a3}{a4} \nbput{{\scriptsize $v$}} 
\ncline{->}{a1}{a3} \nbput{{\scriptsize $\alpha$}} 
\ncline{->}{a2}{a4} \naput{{\scriptsize $F\alpha$}} 

\endpspicture\end{small}
\]

\end{proposition}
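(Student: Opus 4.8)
The plan is to read the required square directly off the large commuting diagram already constructed in the proof of Proposition~\ref{fbraided}, exactly as anticipated in the remark following that proof. Recall that $\alpha\:\htp{a}{b}\to\vtp{a}{b}$ is by definition the composite of unit constraints that rewrites $\htp{a}{b}$ first as $\fourtp{a}{1}{1}{b}$ using the (weak) vertical units $\beta^{-1},\tau^{-1}$, and then as $\vtp{a}{b}$ using the (strict) horizontal units $\rho,\lambda$; applying $F$ gives the corresponding composite $F\alpha$. The first step is therefore purely one of identification: the left-hand and right-hand columns of the \emph{lower} half of the diagram in the proof of Proposition~\ref{fbraided} are precisely $\alpha$ and $F\alpha$ (for the pair $b,a$ in place of $a,b$), its top edge is $h$, and its bottom edge is $v$.

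With that identification made, the square to be proved is literally the outer boundary of the lower half of that diagram, after the harmless relabelling $a\leftrightarrow b$ (equivalently, the upper half read through $\ol\alpha$ gives the same square for the other orientation). So the second step is to confirm that this lower half is self-contained: every internal face occurring in it commutes using only the structure of a weak map of $HV$-algebras. These faces are the two naturality squares for $v$ and for $h$, the single $v$--$h$ interaction face (an instance of the $2\times 2$ interaction axiom coming from Theorem~\ref{weakTSmap}), and the triangles built from the monoidal-functor unit axioms together with coherence among the unit constraints. Since each of these was already shown to commute in the proof of Proposition~\ref{fbraided}, pasting them yields commutativity of the outer boundary, which is exactly the asserted square.

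Finally, because $\alpha$ is a composite of invertible unit constraints it is an isomorphism, and $F$ preserves isomorphisms, so $F\alpha$ is invertible as well; commutativity of the square then gives $h=(F\alpha)^{-1}\circ v\circ\alpha$, exhibiting $h$ as determined by $v$, which is the point of the proposition.

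The only genuine work — and hence the only place a slip could hide — is the bookkeeping in the identification step: one must track carefully which unit constraints ($\lambda,\rho,\tau,\beta$) appear along each side of the half-diagram and in which order, so as to be certain that the composite down its edge agrees \emph{on the nose} with the definition of $\alpha$, and that no face in the chosen half secretly requires data (such as a horizontal associativity constraint, whose delicate interaction with vertical associativity is noted via \cite{koc3}) beyond what the weak-map structure supplies. There is no new conceptual content here, since all the commuting faces were already assembled in the proof of Proposition~\ref{fbraided}; the statement is genuinely a corollary of that proof.
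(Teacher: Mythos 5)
Your proposal is correct and is essentially identical to the paper's own (implicit) proof: the paper gives Proposition~\ref{hfromv} no separate argument but, exactly as you do, reads the square off the lower half of the diagram in the proof of Proposition~\ref{fbraided}, whose left and right columns are the composites $\htp{\beta^{-1}}{\tau^{-1}}$ followed by $\vtp{\rho}{\lambda}$ defining $\alpha$ and $F\alpha$ (for the pair $(b,a)$, harmlessly relabelled), and whose internal faces --- naturality of $h$ and of $v$, the single $v$--$h$ interaction face, and the monoidal-functor unit axioms --- use only the weak-$HV$-map structure. Your parenthetical observation that the upper half yields the same square through $\ol\alpha$ likewise matches the paper's subsequent remark, so there is nothing to add.
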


Note that this means that if we know we have a weak $HV$-map, then $h$ can be reconstructed from $v$, but if we started with only $v$ we might not be able to make it into a weak $HV$-map.  It remains to show that any $v$ satisfying the braid axiom will yield a weak map by reconstructing $h$ according to this diagram.  

The idea is that if we start with a constraint $v$ there are two options for extending the weak map structure, depending on how we're thinking about the overall structure. Either we're thinking about braided monoidal categories in which case we want $v$ to satisfy the braid axiom. Or we're thinking about $HV$-algebras in which case we want to reconstruct an $h$ and check the interaction axiom. The point is that these turn out to be equivalent.

\begin{proposition}\label{fweakmap}
Let $X$ and $Y$ be \ddbicatscats\ and $(F,v)$ a monoidal functor $X \tra Y$ with respect to the vertical tensor products, satisfying the braid axiom. Then defining $h$ according to Proposition~\ref{hfromv} makes $F$ into a weak map of \ddbicatscats. 
\end{proposition}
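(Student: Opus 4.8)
The plan is to take the formula forced by Proposition~\ref{hfromv} as the \emph{definition} of $h$ and then verify the remaining requirements of the characterisation of weak maps of \ddbicatscats\ given above: that $(F,h)$ is a weak monoidal functor for the horizontal (strict) tensor product, sharing the unit constraint $\eta$ of the vertical structure, and that the $v$--$h$ interaction axiom holds. Since $\alpha$ is a natural isomorphism assembled from the (weak) vertical unit constraints, and $v$ is a natural isomorphism, setting
\[h := (F\alpha)^{-1} \circ v \circ \alpha\]
(with source and target determining the instances) yields a natural isomorphism $\htp{Fa}{Fb} \to F(\htp{a}{b})$, and the square of Proposition~\ref{hfromv} commutes by construction. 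Once the axioms are in hand, that characterisation assembles $(F,v,h)$ into a weak $HV$-map, which is exactly a weak map of \ddbicatscats.

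First I would dispatch the horizontal monoidal-functor axioms for $h$. Because the horizontal tensor product has strict units and strict associativity, the horizontal unit axiom carries no associator or unitor data and simply forces the horizontal unit constraint to agree with $\eta$; this follows from the fact that $\alpha$ is built from vertical unit constraints, so conjugating $v$ by $\alpha$ against a horizontal unit reduces, by coherence, to $\eta$. The horizontal associativity axiom for $h$, comparing the two ways of merging $\htp{\htp{Fa}{Fb}}{Fc}$, is translated by $\alpha$ into a statement about the vertical structure; here Proposition~\ref{alphakey} converts the interchange-driven reshaping into the standard braiding, and the axiom closes using the multiplicativity of $v$ as a vertical monoidal functor together with the braid axiom and coherence. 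These are routine coherence manipulations which I would present in string-diagram form in the appendix.

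The main obstacle is the $v$--$h$ interaction axiom, the $2\times 2$ square relating $\frac{h}{h}$ followed by $v$ to $v\|v$ followed by $h$. Substituting $h = (F\alpha)^{-1}\,v\,\alpha$ into each leg and reorganising the $F\alpha$ factors via functoriality of $F$, the square reduces to a diagram built solely from $v$, the Eckmann--Hilton maps $\alpha$, and their $F$-images. The two legs differ precisely by a transposition of the middle two entries of the grid $\fourtp{a}{b}{c}{d}$, and Proposition~\ref{alphakey} identifies this transposition with the standard braiding $\gamma$ inserted between the rows of the vertical composite. At that point the two legs of the square differ by $v\circ\gamma$ on the one hand and $F\gamma\circ v$ on the other, so the square closes by the braid axiom (the square of Proposition~\ref{fbraided}) that $(F,v)$ satisfies by hypothesis, with the remaining faces being naturality squares for $v$ and instances of coherence for the vertical unit constraints. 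This is the only place the hypothesis is used in an essential way, and it is the step demanding the most care in bookkeeping.

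Having shown that $h$ is a horizontal monoidality constraint compatible with the shared unit $\eta$, and that $v$ and $h$ satisfy the interaction axiom, I would conclude by invoking the characterisation of weak maps of \ddbicatscats\ to assemble $(F,v,h)$ into a weak $HV$-map, completing the proof.
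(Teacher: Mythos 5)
Your proposal is correct and follows essentially the same route as the paper: define $h := (F\alpha)^{-1}\circ v\circ \alpha$ as forced by Proposition~\ref{hfromv}, then verify the $v$--$h$ interaction axiom by expanding both legs, using Proposition~\ref{alphakey} to turn the transposition of the middle grid entries into the standard braiding $\gamma$, and closing the diagram with the braid axiom plus naturality of $\alpha$ and $v$ and coherence of $v$. If anything you are slightly more thorough than the paper, whose printed proof verifies only the interaction axiom and leaves the horizontal monoidal-functor (unit and associativity) axioms for $h$ implicit as routine coherence checks.
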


\begin{proof}
We need to show that with $h$ defined in this way the interaction axiom holds.  This is seen by the following diagram; the outside  is the interaction axiom, and we see that it follows from the braid axiom.

\[\begin{small}
\psset{unit=0.13cm,labelsep=2pt,nodesep=5pt, arrows=->}
\pspicture(-10,-68)(10,68)


\rput(-45,44){\rnode{a1}{$\fourtp{Fa}{Fb}{Fc}{Fd}$}}
\rput(-15,44){\rnode{a2}{$\fourvtp{Fa}{Fb}{Fc}{Fd}$}}
\rput(15,44){\rnode{a4}{$\vtp{\raisebox{1em}{$F\left(\vtp{a}{b}\right)$}}{\raisebox{-0.8em}{$F\left(\vtp{c}{d}\right)$}}$}}
\rput(45,44){\rnode{a5}{$\vtp{F(a\|b)}{F(c\|d)}$}}

\rput(0,22){\rnode{b3}{$\threevtpp{Fa}{F\left(\vtp{b}{c}\right)}{Fd}$}}

\rput(-15,0){\rnode{c2}{$\fourvtp{Fa}{Fc}{Fb}{Fd}$}}
\rput(15,0){\rnode{c4}{$F\left(\hs{0.1}\fourvtp{a}{b}{c}{d}\hs{0.1}\right)$}}

\rput(0,-22){\rnode{d3}{$\threevtpp{Fa}{F\left(\vtp{c}{b}\right)}{Fd}$}}

\rput(-45,-44){\rnode{e1}{$\htp{F\left(\vtp{a}{c}\right)}{F\left(\vtp{b}{d}\right)}$}}
\rput(-15,-44){\rnode{e2}{$\vtp{\raisebox{1em}{$F\left(\vtp{a}{c}\right)$}}{\raisebox{-0.8em}{$F\left(\vtp{b}{d}\right)$}}$}}
\rput(15,-44){\rnode{e4}{$F\left(\hs{0.1}\fourvtp{a}{c}{b}{d}\hs{0.1}\right)$}}
\rput(45,-44){\rnode{e5}{$F\left(\fourtp{a}{b}{c}{d}\right)$}}

\ncline{a1}{a2} \naput{\scr $\vtp{\alpha}{\alpha}$}
\ncline{a2}{a4} \naput{\scr $\vtp{v}{v}$}
\ncline{a4}{a5} \naput{\scr $\vtp{F\alpha\inv}{F\alpha\inv}$}

\ncline[nodesepB=-1pt]{a2}{b3} \naput[labelsep=0pt,npos=0.45]{\scr $\threevtp{1}{v}{1}$}
\ncline{a4}{c4} \naput{\scr $v$}

\ncline{a1}{e1} \nbput{\scr $\htp{v}{v}$}
\ncline[nodesepB=7pt]{a1}{c2} \naput{\scr $\alpha$}
\ncline{a2}{c2} \nbput{\scr $\threevtp{1}{\gamma}{1}$}
\ncline[nodesepA=7pt,nodesepB=-1pt]{c2}{d3} \naput[labelsep=0pt,npos=0.45]{\scr $\threevtp{1}{v}{1}$}
\ncline[nodesepA=0pt,nodesepB=-4pt]{b3}{c4} \naput{\scr $v$}
\ncline{c4}{e5} \naput[npos=0.45,labelsep=-1pt]{\scr $F\left(\vtp{\alpha\inv}{\alpha\inv}\right)$}
\ncline{a5}{e5} \naput{\scr $v$}
\ncline{b3}{d3} \naput[npos=0.55]{\scr $\threevtp{1}{F\gamma}{1}$}
\ncline[nodesepA=0pt,nodesepB=-4pt]{d3}{e4} \naput{\scr $v$}
\ncline{e1}{e2} \nbput{\scr $\alpha$}
\ncline{e2}{e4} \nbput{\scr $v$}
\ncline{e4}{e5} \nbput{\scr $F\alpha\inv$}

\ncline{e2}{c2} \nbput{\scr $\vtp{v}{v}$}
\ncline{c4}{e4} \naput[npos=0.6]{\scr $F\left(\threevtp{1}{\gamma}{1}\right)$}

\rput(-28, 35){\parbox{10em}{\small\sf\bc Prop \ref{alphakey} \ec}}
\rput(28, -35){\parbox{10em}{\small\sf\bc Prop \ref{alphakey} \ec}}

\rput(-30, -15){\parbox{10em}{\small\sf\bc naturality\\ of $\alpha$ \ec}}
\rput(30, 15){\parbox{10em}{\small\sf\bc naturality\\ of $v$ \ec}}

\rput(-3, -37){\parbox{10em}{\small\sf\bc coherence\\ of $v$ \ec}}

\rput(3, 37){\parbox{10em}{\small\sf\bc coherence\\ of $v$ \ec}}

\rput(7, -11){\parbox{10em}{\small\sf\bc naturality\\ of $v$ \ec}}

\rput(-7, 11){\parbox{10em}{\small\sf\bc braid\\ axiom\\ for $v$ \ec}}

\eps\end{small}\]

\end{proof}

\begin{remark}
Note that we could equally define $h$ using $\ol\alpha$, but in the presence of the braid axiom for $v$, this produces the same constraint, as seen from the diagram below, where the top and bottom edges are the two different ways of producing an $h$, the square is the braid axiom, and the triangles come from the definition of $\gamma$. 

\[\begin{small}
\psset{unit=0.1cm,labelsep=2pt,nodesep=3pt,arrows=->}
\pspicture(-10,-14)(10,14)



\rput(-30,0){\rnode{a0}{$\htp{Fa}{Fb}$}}  
\rput(30,0){\rnode{a5}{$F\big(\htp{a}{b}\big)$}}  

\rput(-10,-10){\rnode{a1}{$\vtp{Fa}{Fb}$}}  
\rput(10,-10){\rnode{a2}{$F\left(\vtp{a}{b}\right)$}}  
\rput(-10,10){\rnode{a3}{\vtp{Fb}{Fa}}}  
\rput(10,10){\rnode{a4}{$F\left(\vtp{b}{a}\right)$}}  

\ncline[nodesepA=4pt]{->}{a1}{a2} \nbput{{\scriptsize $v$}} 
\ncline[nodesepA=4pt]{->}{a3}{a4} \naput{{\scriptsize $v$}} 
\ncline{->}{a3}{a1} \naput{{\scriptsize $\gamma$}} 
\ncline{->}{a4}{a2} \nbput{{\scriptsize $F\gamma$}} 

\ncline[nodesepA=2pt]{->}{a0}{a1} \nbput{{\scriptsize ${\alpha}$}} 
\ncline[nodesepA=2pt]{->}{a0}{a3} \naput{{\scriptsize $\ol{\alpha}$}} 
\ncline[nodesepB=2pt]{->}{a2}{a5} \nbput{{\scriptsize $F{\alpha}^{-1}$}} 
\ncline[nodesepB=2pt]{->}{a4}{a5} \naput{{\scriptsize $F\ol{\alpha}^{-1}$}} 

\endpspicture\end{small}
\]

\noi In fact, as the triangles in this diagram are simply the definition of $\gamma$, we see that the braid axiom is the assertion that these two ways of producing a horizontal constraint are the same.

\end{remark}


We have now shown that a weak map of $HV$-algebras is equivalently a weak map of $V$-algebras satisfying the braid axiom.  We now move on to transformations.

\subsection{Transformations in terms of vertical structure}
\label{four}

We will now characterise transformations of \ddbicatscats.  A priori we know that these have a vertical and a horizontal monoidal structure (with an interaction axiom) and we know that weak maps between such are monoidal with respect to each of those structures.  We will refer to transformations being ``horizontally monoidal'' and ``vertically monoidal'' if they are monoidal with respect to the horizontal or vertical monoidal structures respectively.

By the results of Section~\ref{transdl} we know that a transformation of weak maps of $HV$-algebras is a 2-cell that is both an $H$-transformation and a $V$-transformation.  We will now show that in the case of \ddbicatscats, being a $V$-transformation suffices as we can use the weak Eckmann--Hilton argument to derive the $H$-structure (horizontal monoidal) axiom.  This tells us that a transformation of \ddbicatscats\ is precisely a monoidal transformation between the associated braided monoidal categories.

\begin{proposition}\label{ddtrans}
Consider
$\psset{unit=0.08cm,labelsep=2pt,nodesep=2pt}
\pspicture(-13,-1)(13,7)

\rput(-10,0){\rnode{a1}{$X$}}  
\rput(10,0){\rnode{a2}{$Y$}}  

\ncarc[arcangle=45]{->}{a1}{a2}\naput{{\scriptsize $F$}}
\ncarc[arcangle=-45]{->}{a1}{a2}\nbput{{\scriptsize $G$}}

{
\rput[c](0,0){\psset{unit=1mm,doubleline=true,arrowinset=0.6,arrowlength=0.5,arrowsize=0.5pt 2.1,nodesep=0pt,labelsep=2pt}
\pcline{->}(0,1.8)(0,-1.8) \naput{{\scriptsize $\theta$}}}}

\endpspicture$
where

\vs{0.9}

\begin{itemize}
\item $X$ and $Y$ are \ddbicatscats, 
\item $F$ and $G$ are weak maps, and 
\item $\theta$ is a $V$-transformation, that is, vertically monoidal. 
\end{itemize}
Then $\theta$ is an $H$-transformation, that is, horizontally monoidal.
\end{proposition}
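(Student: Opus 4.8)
The plan is to deduce the horizontal monoidality of $\theta$ from its assumed vertical monoidality, exploiting the fact that, for any weak map, the horizontal constraint is already determined by the vertical one. The key input is Proposition~\ref{hfromv}: for a weak map $(F,v,h)$ the square with edges $h$, $v$, $\alpha$ and $F\alpha$ commutes, and since $\alpha$ (being assembled from unit constraints) and hence $F\alpha$ are invertible, this exhibits $h$ as $F\alpha\inv \circ v \circ \alpha$. I would apply this to \emph{both} $F$ and $G$, always using the clockwise map $\alpha$ rather than $\ol\alpha$ so that the two instances are oriented consistently.

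The binary horizontal monoidality axiom asserts that, for all $a,b\in X$, the square with edges $h$ (for $F$), $\theta_{a\|b}$, $\htp{\theta_a}{\theta_b}$ and $h$ (for $G$) commutes. I would package the argument as a cube whose eight vertices are $\htp{Fa}{Fb}$, $F(\htp{a}{b})$, $\htp{Ga}{Gb}$, $G(\htp{a}{b})$ together with their vertical counterparts $\vtp{Fa}{Fb}$, $F(\vtp{a}{b})$, $\vtp{Ga}{Gb}$, $G(\vtp{a}{b})$. The \emph{top} face is exactly the horizontal axiom to be proved. The \emph{bottom} face is the vertical monoidality square, which commutes because $\theta$ is a $V$-transformation. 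The two faces joining top to bottom on the $F$- and $G$-sides are the respective instances of Proposition~\ref{hfromv}, so they commute. The remaining two side faces are the naturality square of $\alpha$ with legs $\htp{\theta_a}{\theta_b}$ and $\vtp{\theta_a}{\theta_b}$, and the naturality square of the underlying natural transformation $\theta$ evaluated at the morphism $\alpha_{a,b}\: \htp{a}{b}\tra\vtp{a}{b}$; both commute.

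Pasting these five commuting faces rewrites $G\alpha\circ\theta_{\htp{a}{b}}\circ h$ (for $F$) successively — via the right face, the $F$-instance of Proposition~\ref{hfromv}, the bottom face, the left face, and the $G$-instance of Proposition~\ref{hfromv} — into $G\alpha\circ h\,(\text{for }G)\circ \htp{\theta_a}{\theta_b}$. Since $G\alpha$ is invertible, cancelling it yields precisely the binary horizontal monoidality axiom. I would then observe that the nullary (unit) case is automatic: by interchange the horizontal and vertical units coincide and are governed by the same unit constraint, so the unit condition for an $H$-transformation is literally the one already supplied by the $V$-transformation; and, because horizontal composition is strict (terminal operad $P_2=1$), the conditions for all higher arities reduce to the binary and nullary ones, exactly as in the weak-map case. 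Hence $\theta$ is horizontally monoidal.

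I anticipate no genuine obstacle here — the whole proof is a paste of naturality squares glued to two copies of Proposition~\ref{hfromv} — so the only real care-point is bookkeeping the directions: keeping the two applications of Proposition~\ref{hfromv} oriented by $\alpha$ throughout, and recognising that the relevant naturality of $\theta$ is at the weak \eh\ morphism $\alpha_{a,b}$ rather than at the components $\theta_a,\theta_b$. The cube framing is what makes this legible, and it is also why invertibility of $\alpha$ (and thus of $F\alpha$, $G\alpha$) is the one nontrivial fact being used.
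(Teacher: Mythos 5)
Your proposal is correct and is essentially the paper's own proof: the paper likewise invokes Proposition~\ref{hfromv} for both $F$ and $G$ and pastes together the naturality square of $\alpha$, the vertical monoidality square, and the naturality square of $\theta$ at $\alpha_{a,b}$ --- your cube is just the paper's flattened three-square diagram drawn in 3D. Your closing remarks (the shared unit constraint disposes of the nullary case, and strictness of horizontal composition reduces higher arities to the binary and nullary ones) merely make explicit points the paper leaves implicit.
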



\begin{proof}
As usual we write $h$ and $v$ for the horizontal and vertical monoidal functor constraints respectively (for both $F$ and $G$).  We know that the diagram for a monoidal transformation with respect to $v$ commutes; we need to check that the diagram for a monoidal transformation with respect to $h$ follows. By Proposition~\ref{hfromv} we know that $h$ can be expressed in terms of $v$ (for $F$ and $G$ respectively) giving the top and bottom edges of the diagram below. Thus the monoidal transformation diagram we need to check becomes the outside of the diagram below, which is seen to commute as shown:

\[
\psset{unit=0.12cm,labelsep=4pt,nodesep=3pt}
\pspicture(-20,-9)(20,14)


\rput[B](-36,10){\Rnode{a1}{$\htp{Fa}{Fb}$}}  
\rput[B](-12,10){\Rnode{a2}{$\vtp{Fa}{Fb}$}}  
\rput[B](12,10){\Rnode{a3}{$F\left(\vtp{a}{b}\right)$}}  
\rput[B](36,10){\Rnode{a4}{$F(\htp{a}{b})$}}  

\rput[B](-36,-10){\Rnode{b1}{$\htp{Ga}{Gb}$}}  
\rput[B](-12,-10){\Rnode{b2}{$\vtp{Ga}{Gb}$}}  
\rput[B](12,-10){\Rnode{b3}{$G\left(\vtp{a}{b}\right)$}}  
\rput[B](36,-10){\Rnode{b4}{$G(\htp{a}{b})$}}  

\ncline{->}{a1}{a2} \naput{{\scriptsize $\alpha$}} 
\ncline{->}{b1}{b2} \nbput{{\scriptsize $\alpha$}} 

\ncline{->}{a2}{a3} \naput{{\scriptsize $v$}} 
\ncline{->}{b2}{b3} \nbput{{\scriptsize $v$}} 

\ncline{->}{a3}{a4} \naput{{\scriptsize $F\alpha\inv$}} 
\ncline{->}{b3}{b4} \nbput[labelsep=2pt]{{\scriptsize $G\alpha\inv$}} 

\ncline{->}{a1}{b1} \nbput{{\scriptsize $\htp{\theta_a}{\theta_b}$}} 
\ncline{->}{a2}{b2} \naput{{\scriptsize $\vtp{\theta_a}{\theta_b}$}} 
\ncline{->}{a3}{b3} \naput{{\scriptsize $\theta_{\frac{a}{b}}$}} 
\ncline{->}{a4}{b4} \naput{{\scriptsize $\theta_{\htp{a}{b}}$}} 

\rput(-24, 1){\parbox{10em}{\small\sf\bc naturality\\ of $\alpha$ \ec}}
\rput(1, 1){\parbox{10em}{\small\sf\bc vertical\\monoidal\\axiom \ec}}
\rput(25, 1){\parbox{10em}{\small\sf\bc naturality\\ of $\theta$ \ec}}

\endpspicture
\]
\end{proof}

Note that \ddbicatscats, weak maps, and transformations between them form a 2-category which we will write as \twocatt{dd\bicats-Cat}; it is a full sub-2-category of \twocatt{$HV\cat{-Alg}_w$}. 


\section{Biadjoint biequivalence}
\label{five}

We are now ready to state and prove our main theorem. We exhibit a comparison 2-functor between \twocatt{dd\bicats-Cat} and \twocatt{BrMonCat} and prove that it is part of a biadjoint biequivalence. Here we write \twocatt{BrMonCat} for the 2-category of braided (weakly) monoidal categories, weak monoidal functors between them, and monoidal transformations. 

In fact, all the technical components of the equivalence have been proved in \wvc\ and the previous section, so this is just a case of bringing all those results together.

\begin{theorem}[Main Theorem]\label{maintheorem}
There is a 2-functor
\[U \: \twocatt{dd\bicats-Cat} \mtra \twocatt{BrMonCat}\]
extending the construction on 0-cells given in \wvc, and it is part of a biadjoint biequivalence of 2-categories.
\end{theorem}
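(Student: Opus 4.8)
The plan is to define $U$ explicitly on all three dimensions of cells and then verify that it is a local biequivalence, after which the biadjoint biequivalence follows formally. On 0-cells, $U$ is the assignation from \wvc\ sending a \ddbicatscat\ to its underlying \bmc\ (the vertical tensor product equipped with the standard braiding). On 1-cells, I would send a weak map $(F,v,h)$ to the functor $F$ equipped with the vertical monoidality constraint $v$; Proposition~\ref{fbraided} guarantees this is a braided monoidal functor for the standard braiding, so it lands in \twocatt{BrMonCat}. On 2-cells, I would send a transformation $\theta$ to itself regarded as a monoidal transformation of the vertical structures, which is legitimate since every transformation of \ddbicatscats\ is in particular a $V$-transformation.

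First I would check that $U$ is a strict 2-functor. Because the target structures are simply the vertical components of the source structures, preservation of identities is immediate, and preservation of composition of 1-cells and of both composites of 2-cells reduces to the observation that the composite of weak $HV$-maps has vertical constraint given by the evident composite of the vertical constraints (and similarly for whiskering), which is exactly how monoidal functors and monoidal transformations compose. This is routine but must be recorded.

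Next I would assemble the three conditions making $U$ a biequivalence. Biessential surjectivity on 0-cells is precisely the content of \wvc: for any \bmc\ $B$ the \ddbicatscat\ $\Sigma B$ satisfies $U\Sigma B \simeq B$. For local essential surjectivity on 1-cells, Proposition~\ref{fweakmap} shows that any monoidal functor $(F,v)$ satisfying the braid axiom --- that is, any object of $\twocatt{BrMonCat}(UX,UY)$ --- is $UF'$ for a weak map $F'$ obtained by reconstructing $h$ from $v$ via Proposition~\ref{hfromv}; so $U$ is in fact strictly surjective on the relevant hom-categories. For local full faithfulness on 2-cells, Theorem~\ref{TStrans} identifies transformations of $HV$-algebras with pairs consisting of a $V$-transformation and an $H$-transformation, and Proposition~\ref{ddtrans} shows the $H$-condition is automatic once the $V$-condition holds; hence the transformations $F \Rightarrow G$ of \ddbicatscats\ are exactly the monoidal transformations $UF \Rightarrow UG$, and since $U$ is the identity on underlying 2-cells it is bijective there. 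Together these say that each hom-functor $U_{X,Y}$ is an equivalence of categories.

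The hard part is not done in this paper: it is the fact that a biequivalence can always be completed to a \emph{biadjoint} biequivalence, which I would invoke from \cite{gur3} rather than prove. The point worth emphasising is that this lets me conclude the existence of a pseudo-inverse together with the unit, counit and invertible modifications satisfying the triangle coherences \emph{without} constructing any of them by hand; the explicit pseudo-inverse (via a refined $\Sigma$) is genuinely delicate and is deferred to the sequel. Thus, once $U$ is verified to be a 2-functor and the three biequivalence conditions are in place, Gurski's theorem yields the biadjoint biequivalence and completes the proof.
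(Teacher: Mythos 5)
Your proposal is correct and follows essentially the same route as the paper's own proof: the same construction of $U$ on 0-, 1-, and 2-cells, biessential surjectivity from \wvc, local essential surjectivity via Proposition~\ref{fweakmap}, local full faithfulness via Proposition~\ref{ddtrans}, and the final appeal to Gurski's result in \cite{gur3} to upgrade the biequivalence to a biadjoint one. The only difference is that you spell out the (routine) 2-functoriality check that the paper states without elaboration, which is a reasonable addition.
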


\begin{proof}
First we construct the 2-functor $U$.
\begin{itemize}

\item On 0-cells: given a \ddbicatscat\ $X$, $\UX$ is the braided monoidal category given by the vertical tensor product and the standard braiding $\gamma$.

\item On 1-cells: given a weak map of \ddbicatscats\ 
\[(F,v,h)\: X \tra Y\]
$UF$ is the associated braided monoidal functor $(F,v)$, which we know is braided by Proposition~\ref{fbraided}.

\item On 2-cells: given a transformation between weak maps of \ddbicatscats\
\[
\psset{unit=0.1cm,labelsep=2pt,nodesep=3pt}
\pspicture(-10,-5)(10,5)

\rput(-10,0){\rnode{a1}{$X$}}  
\rput(10,0){\rnode{a2}{$Y$}}  

\ncarc[arcangle=45]{->}{a1}{a2}\naput{{\scriptsize $F$}}
\ncarc[arcangle=-45]{->}{a1}{a2}\nbput{{\scriptsize $G$}}

{
\rput[c](0,0){\psset{unit=1mm,doubleline=true,arrowinset=0.6,arrowlength=0.5,arrowsize=0.5pt 2.1,nodesep=0pt,labelsep=2pt}
\pcline{->}(0,1.5)(0,-1.5) \naput{{\scriptsize $\theta$}}}}

\endpspicture
\]
$U\theta$ is the underlying transformation with respect to just the vertical monoidal structure.

\end{itemize}

\noi This is a strict 2-functor.  The main theorem of \wvc\ proved that $U$ is biessentially surjective on 0-cells. By Proposition~\ref{fweakmap} we know $U$ is locally essentially surjective on 1-cells (in fact locally surjective). By Proposition~\ref{ddtrans} we know $U$ is locally full and faithful on 2-cells. Then by \cite[Lemma~3.1]{gur3} it follows that $U$ is part of a biadjoint biequivalence of 2-categories.
\end{proof}

Note that constructing a pseudo-inverse for $U$ is non-trivial.  A candidate construction on 0-cells was made in \wvc\ to prove the biessential surjectivity, but extending it to a 2-functor requires more work and we defer it to a sequel.

In future work we will perform an analogous analysis for doubly-degenerate tricategories according to the theory of Trimble \cite{tri1}. This theory uses iterated enrichment with more general operad actions, with the result that although the ideas are analogous the technicalities are a little more intricate.


\appendix

\section{String diagram calculations}

In this section we will give the deferred 2-categorical proofs that are more efficaciously performed using string diagrams.  The advantage of string diagrams in this case is that almost all our 2-categorical concepts are strict, and so we can ``ignore'' naturality squares.  The conventions we use are as follows.  We read the diagrams from top to bottom. We are working with 2-categories, 2-functors, and transformations (all strict), so in particular the relative heights do not matter (because of naturality).  To simplify the diagrams we will omit labels wherever there is no ambiguity.


%
%
%
%
%
%
%
%
%

\subsection{Monads and distributive laws}

We begin by laying out our basic notation for the classical results of 2-monads and distributive laws. For a 2-monad $T$ on a 2-category $\cC$, we write its multiplication, unit, and axioms as follows.

\[\psset{unit=0.1cm,labelsep=2pt,nodesep=3pt,linewidth=0.8pt}
\pspicture(0,15)(90,25)
\begin{small}



\rput(0,20){

\rput(-2,5){$T$}
\rput(2,5){$T$}

\rput(0,-4){$T$}

\psline(-2,2)(0,0)(2,2)
\psline(0,0)(0,-2)

}


\rput(12,20){

\rput(0,5){$1$}

\rput(0,-4){$T$}

\psline(0,0)(0,-2)
\pscircle(0,0.5){0.5}

}


\rput(35,20){

\rput(0,0){
\psline(-2,2)(0,0)(2,2)
\psline(0,0)(0,-2)
}

\rput(2,-4){
\psline(-2,2)(0,0)(2,2)
\psline(0,0)(0,-2)

\psline(2,2)(2,6)

}

\rput(10,-2){$=$}

\rput(18,0){

\rput(2,0){
\psline(-2,2)(0,0)(2,2)
\psline(0,0)(0,-2)
}

\rput(0,-4){
\psline(-2,2)(0,0)(2,2)
\psline(0,0)(0,-2)

\psline(-2,2)(-2,6)

}
}

}


\rput(70,20){

\psline(-2,0)(-2,-2)
\pscircle(-2,0.5){0.5}
\psline(2,2)(2,-2)
\rput(0,-4){
\psline(-2,2)(0,0)(2,2)
\psline(0,0)(0,-2)
}

\rput(7,-2){$=$}

\rput(10,0){
\psline(2,2)(2,-6)
}

\rput(17,-2){$=$}

\rput(24,0){
\psline(2,0)(2,-2)
\pscircle(2,0.5){0.5}
\psline(-2,2)(-2,-2)

\rput(0,-4){
\psline(-2,2)(0,0)(2,2)
\psline(0,0)(0,-2)
}
}

}

\end{small}
\endpspicture\]


\noi We write a distributive law $ST \Tra TS$ as
\[\psset{unit=0.1cm,labelsep=2pt,nodesep=3pt,linewidth=0.8pt}
\pspicture(0,-3)(10,7)
\begin{small}

\rput(0,0){

\rput(-2,5){$S$}
\rput(2,5){$T$}

\rput(-2,-4){$T$}
\rput(2,-4){$S$}

\rput(0,0.5){
\psline(-2,2)(2,-2)
\psline[border=2pt](2,2)(-2,-2)
}
}

\end{small}
\endpspicture\]

\noi and the axioms for a distributive law as follows:

\[\psset{unit=0.1cm,labelsep=2pt,nodesep=3pt,linewidth=0.8pt}
\pspicture(20,-10)(120,4)
\begin{small}

%
%
%
%
%
%


\rput(20,0){

 \rput(-6,5){$S$}
\rput(-2,5){$S$}
\rput(2,5){$T$}
\psline(-2,2)(2,-2)
\psline[border=2pt](2,2)(-2,-2)
\psline(-6,2)(-6,-2)

\rput(-4,-4){
\psline(-2,2)(2,-2)
\psline[border=2pt](2,2)(-2,-2)
\psline(6,2)(6,-2)
}

\rput(0,-8){
\psline(-2,2)(0,0)(2,2)
\psline(0,0)(0,-2)
\psline(-6,2)(-6,-2)
}

\rput(7,-4){$=$}

\rput(16,0){


\rput(-4,0){
\psline(-2,2)(0,0)(2,2)
\psline(0,0)(0,-2)
\psline(6,2)(6,-2)

\psline(0,-2)(6,-10)
\psline[border=2pt](6,-2)(0,-10)

}

}

}


\rput(54,0){

 \rput(-6,5){$S$}
\rput(-2,5){$T$}
\rput(2,5){$T$}
\rput(-4,0){
\psline(-2,2)(2,-2)
\psline[border=2pt](2,2)(-2,-2)
}
\psline(2,2)(2,-2)

\rput(0,-4){
\psline(-2,2)(2,-2)
\psline[border=2pt](2,2)(-2,-2)
\psline(-6,2)(-6,-2)
}

\rput(-4,-8){
\psline(-2,2)(0,0)(2,2)
\psline(0,0)(0,-2)
\psline(6,2)(6,-2)
}

\rput(7,-4){$=$}

\rput(17,0){


\rput(0,0){
\psline(-2,2)(0,0)(2,2)
\psline(0,0)(0,-2)
\psline(-6,2)(-6,-2)

\rput(-6,0){
\psline(0,-2)(6,-10)
\psline[border=2pt](6,-2)(0,-10)
}

}

}

}



\rput(85,-2){

\rput(2,5){$T$}

\rput(-2,0){
\psline(0,0)(0,-2)
\pscircle(0,0.5){0.5}
}
\psline(2,2)(2,-2)

\rput(0,-4){
\psline(-2,2)(2,-2)
\psline[border=2pt](2,2)(-2,-2)
}

\rput(-2,-8){$T$}
\rput(2,-8){$S$}

\rput(6.5,-4){$=$}


\rput(13,0){

\rput(-2,5){$T$}

\rput(2,-2){
\psline(0,0)(0,-4)
\pscircle(0,0.5){0.5}
}

\psline(-2,2)(-2,-6)

\rput(-2,-8){$T$}
\rput(2,-8){$S$}

}


}


\rput(110,-2){

\rput(-2,5){$S$}

\rput(2,0){
\psline(0,0)(0,-2)
\pscircle(0,0.5){0.5}
}
\psline(-2,2)(-2,-2)

\rput(0,-4){
\psline(-2,2)(2,-2)
\psline[border=2pt](2,2)(-2,-2)
}

\rput(-2,-8){$T$}
\rput(2,-8){$S$}

\rput(6.5,-4){$=$}


\rput(13,0){

\rput(2,5){$S$}

\rput(-2,-2){
\psline(0,0)(0,-4)
\pscircle(0,0.5){0.5}
}

\psline(2,2)(2,-6)

\rput(-2,-8){$T$}
\rput(2,-8){$S$}

}


}

\end{small}
\endpspicture\]

\noi Given such a distributive law, $TS$ becomes a monad with the following multiplication and unit:
\[\psset{unit=0.1cm,labelsep=2pt,nodesep=3pt,linewidth=0.8pt}
\pspicture(0,-8)(45,8)
\begin{small}

\rput(0,0){
 \rput(-6,5){$T$}
\rput(-2,5){$S$}
\rput(2,5){$T$}
\rput(6,5){$S$}

\psline(-6,2)(-6,-2)
\psline(-2,2)(2,-2)
\psline[border=2pt](2,2)(-2,-2)
\psline(6,2)(6,-2)

\rput(-4,-4){
\rput(0,-4){$T$}
\psline(-2,2)(0,0)(2,2)
\psline(0,0)(0,-2)
}

\rput(4,-4){
\rput(0,-4){$S$}
\psline(-2,2)(0,0)(2,2)
\psline(0,0)(0,-2)
}
}

\rput(20,0){

\rput(-2,0){
\rput(0,-4){$T$}
\psline(0,0)(0,-2)
\pscircle(0,0.5){0.5}
}

\rput(2,0){
\rput(0,-4){$S$}
\psline(0,0)(0,-2)
\pscircle(0,0.5){0.5}
}

}

\end{small}
\endpspicture\]

\noi We write an algebra for a monad $T$ as follows, together with its axioms. (Note that this can be made consistent with the string diagram notation by regarding the object $A$ as a functor $1 \tra \cC$.)
\[\psset{unit=0.1cm,labelsep=2pt,nodesep=3pt,linewidth=0.8pt}
\pspicture(0,22)(100,40)
\begin{small}


\rput(0,30){

\rput(-2,10){$T$}
\rput(2,10){$A$}
\rput(0,-5){$A$}

\psline(0,0)(-4,5)(4,5)(0,0)
\psline(-2,5)(-2,7)
\psline(2,5)(2,7)
\psline(0,0)(0,-2)
\rput(0,3){\scr $a_t$}
}

%


\rput(25,30){

\rput(1,0){
\rput(-6,10){$T$}
\rput(-2,10){$T$}
\rput(2,10){$A$}
}

\rput(1,0){

\psline(0,0)(-4,5)(4,5)(0,0)
\psline(-2,5)(-2,7)
\psline(2,5)(2,7)
\psline(0,0)(0,-2)
\rput(0,3){\scr $a_t$}

}

\rput(-2,-8){

\psline(0,0)(-5,5)(5,5)(0,0)
\psline(-3,5)(-3,15)
\psline(3,5)(3,7)
\psline(0,0)(0,-2)
\rput(0,3){\scr $a_t$}

}

\rput(9,0){$=$}

\rput(20,-1){

\rput(0,0){

\rput(-4,5){
\psline(-2,2)(0,0)(2,2)
\psline(0,0)(0,-2)
\psline(6,2)(6,-2)
}

\rput(-1,-8){
\psline(0,0)(-5,5)(5,5)(0,0)
\psline(-3,5)(-3,12)
\psline(3,5)(3,12)
\psline(0,0)(0,-2)
\rput(0,3){\scr $a_t$}
}

}

}

}

\rput(68,25){


\psline(0,0)(-4,5)(4,5)(0,0)
\psline(-2,5)(-2,7.5)
\pscircle(-2,8){0.5}

\psline(2,5)(2,10)
\psline(0,0)(0,-2)
\rput(0,3){\scr $a_t$}

\rput(10,4){$=$}

\rput(16,0){
\psline(0,-2)(0,10)

}

}

\end{small}
\endpspicture\]

\noi We now turn to algebras for a composite monad $TS$ arising from a distributive law.  Corollory~\ref{distpair} says that a $TS$-algebra is equivalently a $T$ algebra and an $S$-algebra satisfying the following interaction axiom.
\[\psset{unit=0.1cm,labelsep=2pt,nodesep=3pt,linewidth=0.8pt}
\pspicture(0,0)(35,20)
\begin{small}


\rput(0,10){

\rput(-6,10){$S$}
\rput(-2,10){$T$}
\rput(2,10){$A$}

\rput(-4,5){
\psline(-2,2)(2,-2)
\psline[border=2pt](2,2)(-2,-2)
\psline(6,2)(6,-2)
}

\rput(0,-4){
\psline(0,0)(-4,5)(4,5)(0,0)
\psline(-2,5)(-2,7)
\psline(2,5)(2,7)
\psline(0,0)(0,-2)
\rput(0,3){\scr $a_s$}
\psline(-6,7)(-6,-2)
}

\rput(-3,-11){
\psline(0,0)(-4.5,5)(4.5,5)(0,0)
\psline(-3,5)(-3,7)
\psline(3,5)(3,7)
\psline(0,0)(0,-2)
\rput(0,3){\scr $a_t$}
}

}

\rput(12,8){$=$}

\rput(25,10){


\rput(0,-4){
\psline(0,0)(-4,5)(4,5)(0,0)
\psline(-2,5)(-2,11)
\psline(2,5)(2,11)
\psline(0,0)(0,-2)
\rput(0,3){\scr $a_t$}
\psline(-6,11)(-6,-2)
}

\rput(-3,-11){
\psline(0,0)(-4.5,5)(4.5,5)(0,0)
\psline(-3,5)(-3,7)
\psline(3,5)(3,7)
\psline(0,0)(0,-2)
\rput(0,3){\scr $a_s$}
}

}

\end{small}
\endpspicture\]

\noi Given a $TS$-algebra $a_{ts}$ we produce an $S$-algebra and a $T$-algebra as follows; it is then quite straightforward to check they satisfy the interaction axiom using the diagrams.

\[\psset{unit=0.1cm,labelsep=2pt,nodesep=3pt,linewidth=0.8pt}
\pspicture(0,-5)(35,9)
\begin{small}


\rput(0,0){

\rput(-6.1,2.3){\scr $T$}
\rput(0,10){$S$}
\rput(4,10){$A$}

\rput(0,-5){
\psline(0,0)(-6,5)(6,5)(0,0)
\pscircle(-4,10.5){0.5}
\psline(-4,5)(-4,10)
\psline(0,5)(0,12)
\psline(4,5)(4,12)
\psline(0,0)(0,-2)
\rput(0,3){\scr $a_{ts}$}
}

}

\rput(20,0){

\rput(-4,10){$T$}
\rput(-1.7,2.3){\scr $S$}
\rput(4,10){$A$}

\rput(0,-5){
\psline(0,0)(-6,5)(6,5)(0,0)
\pscircle(0,10.5){0.5}
\psline(0,5)(0,10)
\psline(-4,5)(-4,12)
\psline(4,5)(4,12)
\psline(0,0)(0,-2)
\rput(0,3){\scr $a_{ts}$}
}

}

\end{small}
\endpspicture\]

\noi Conversely, given an $S$-algebra and a $T$ algebra we construct a putative $TS$-algebra as follows, and can then use the string diagrams to check that the algebra axioms follow from the interaction axiom. (That is, the multiplication axiom follows from the interaction axiom; the unit axiom follows from the individual unit axioms.)

\[\psset{unit=0.1cm,labelsep=2pt,nodesep=3pt,linewidth=0.8pt}
\pspicture(0,-5)(45,10)
\begin{small}

\rput(0,8){

\rput(0,1){
\rput(-6,4){$T$}
\rput(-2,4){$S$}
\rput(2,4){$A$}
}

\rput(0,-4){
\psline(0,0)(-4,5)(4,5)(0,0)
\psline(-2,5)(-2,7)
\psline(2,5)(2,7)
\psline(0,0)(0,-2)
\rput(0,3){\scr $a_s$}
\psline(-6,7)(-6,-2)
}

\rput(-3,-11){
\psline(0,0)(-4.5,5)(4.5,5)(0,0)
\psline(-3,5)(-3,7)
\psline(3,5)(3,7)
\psline(0,0)(0,-2)
\rput(0,3){\scr $a_t$}
}

}

\end{small}
\endpspicture\]

\subsection{Weak maps of algebras}

We now address weak maps of algebras.  Note that in all that follows, we will label 2-cells between string diagrams just with the name of the non-trivial part of the 2-cell.

A weak map of algebras 
\[\psset{nodesep=2pt}
\left(\pspicture(-3,8)(4,17)
\rput(0,0){
\rput(0,14){\rnode{a1}{$TA$}}  
\rput(0,3){\rnode{a2}{$A$}}  
\ncline{->}{a1}{a2} \naput{{\scriptsize $a_t$}} 
}
\endpspicture\right)
\ltra
\psset{nodesep=2pt}
\left(\pspicture(-3,8)(3,17)
\rput(0,0){
\rput(0,14){\rnode{a1}{$TB$}}  
\rput(0,3){\rnode{a2}{$B$}}  
\ncline{->}{a1}{a2} \naput{{\scriptsize $b_t$}} 
}
\endpspicture\right)
\]
consists of a 1-cell $A \tmap{f} B$ and a 2-cell isomorphism as shown below:

\[\psset{unit=0.1cm,labelsep=2pt,nodesep=3pt,linewidth=0.8pt}
\pspicture(0,53)(50,80)
\begin{small}


\rput(0,60){

\rput(0,4){
\rput(-2,14){$T$}
\rput(2,14){$A$}
}

\psline(0,0)(-4,5)(4,5)(0,0)
\psline(-2,5)(-2,15)
\psline(2,5)(2,15)

\rput(2,10){
\psframe*[linecolor=white](-1.5,-2)(1.5,2)
\psframe(-1.5,-2)(1.5,2)
\rput(0,0){\scr $f$}
}

\psline(0,0)(0,-2)
\rput(0,3){\scr $b_t$}
\rput(0,-4.5){$B$}

\rput(15,5){\Tmap{\tau_f}}

\rput(30,0){

\rput(0,8){
\rput(-2,10){$T$}
\rput(2,10){$A$}

\psline(0,0)(-4,5)(4,5)(0,0)
\psline(-2,5)(-2,7)
\psline(2,5)(2,7)
\psline(0,0)(0,-10)
\rput(0,3){\scr $a_t$}
}


\rput(0,3){
\psframe*[linecolor=white](-1.5,-2)(1.5,2)
\psframe(-1.5,-2)(1.5,2)
\rput(0,0){\scr $f$}
}

\rput(0,-4.5){$B$}

}

}



\end{small}
\endpspicture\]

\noi satisfying the following axioms
\[\psset{unit=0.1cm,labelsep=2pt,nodesep=3pt,linewidth=0.8pt}
\pspicture(0,-30)(110,40)
\begin{small}


\rput(0,35){

\rput(2,-27){
\pcline[doubleline=true,arrowinset=0.7,arrowlength=0.8, arrowsize=3.5pt 1.5]{-}(0,0)(0,-8)
}

\rput(50,-27){
\pcline[doubleline=true,arrowinset=0.7,arrowlength=0.8, arrowsize=3.5pt 1.5]{-}(0,0)(0,-8)
}


\rput(0,0){

\rput(-2,5){$T$}
\rput(2,5){$T$}
\rput(6,5){$A$}


\rput(4,-13)
{

\psline(0,0)(-4,5)(4,5)(0,0)
\psline(-2,5)(-2,15)
\psline(2,5)(2,15)

\rput(2,10){
\psframe*[linecolor=white](-1.5,-2)(1.5,2)
\psframe(-1.5,-2)(1.5,2)
\rput(0,0){\scr $f$}
}

\psline(0,0)(0,-2)
\rput(0,3){\scr $b_t$}

\rput(-3,-8){
\psline(0,0)(-4.5,5)(4.5,5)(0,0)
\psline(-3,5)(-3,23)
\psline(3,5)(3,7)

\psline(0,0)(0,-2)
\rput(0,3){\scr $b_t$}
}

\rput(11,5){\Tmap{\tau_f}}

}


\rput(17,0){

\rput(7,-13){

\rput(4,8){
\rput(-4,0){
}

\psline(0,0)(-4,5)(4,5)(0,0)
\psline(-2,5)(-2,7)
\psline(2,5)(2,7)
\psline(0,0)(0,-10)
\rput(0,3){\scr $a_t$}
}


\rput(4,3){
\psframe*[linecolor=white](-1.5,-2)(1.5,2)
\psframe(-1.5,-2)(1.5,2)
\rput(0,0){\scr $f$}
}


\rput(1,-8){
\psline(0,0)(-4.5,5)(4.5,5)(0,0)
\psline(-3,5)(-3,23)
\psline(3,5)(3,7)

\psline(0,0)(0,-2)
\rput(0,3){\scr $b_t$}
}

}

}

\rput(38,-8){\Tmap{\tau_f}}

}
}


\rput(49,42){

\rput(0,-7){
}


\rput(4,-13)
{

\psline(0,0)(-4,5)(4,5)(0,0)
\psline(-2,5)(-2,8)
\psline(2,5)(2,8)

\psline(0,0)(0,-2)
\rput(0,3){\scr $a_t$}

\rput(-3,-8){
\psline(0,0)(-4.5,5)(4.5,5)(0,0)
\psline(-3,5)(-3,16)
\psline(3,5)(3,7)

\psline(0,0)(0,-10)
\rput(0,3){\scr $a_t$}

\rput(0,-5){
\psframe*[linecolor=white](-1.5,-2)(1.5,2)
\psframe(-1.5,-2)(1.5,2)
\rput(0,0){\scr $f$}
}

}

}
}


\rput(4,-12){

\rput(0,0){

\rput(-4,5){
\psline(-2,2)(0,0)(2,2)
\psline(0,0)(0,-2)
\psline(6,2)(6,-2)
}

\rput(-1,-8){
\psline(0,0)(-5,5)(5,5)(0,0)
\psline(-3,5)(-3,12)
\psline(3,5)(3,12)
\psline(0,0)(0,-2)
\rput(0,3){\scr $b_t$}

\rput(3,10){
\psframe*[linecolor=white](-1.5,-2)(1.5,2)
\psframe(-1.5,-2)(1.5,2)
\rput(0,0){\scr $f$}
}

}

}

}


\rput(50,-10){

\rput(0,0){

\rput(-4,5){
\psline(-2,2)(0,0)(2,2)
\psline(0,0)(0,-2)
\psline(6,2)(6,-2)
}

\rput(-1,-3){
\psline(0,0)(-5,5)(5,5)(0,0)
\psline(-3,5)(-3,7)
\psline(3,5)(3,7)
\psline(0,0)(0,-10)
\rput(0,3){\scr $a_t$}

\rput(0,-5){
\psframe*[linecolor=white](-1.5,-2)(1.5,2)
\psframe(-1.5,-2)(1.5,2)
\rput(0,0){\scr $f$}
}

}

}

}


\rput(13,-16){
\pcline[doubleline=true,arrowinset=0.7,arrowlength=0.8, arrowsize=3.5pt 1.5]{->}(0,0)(27,0)\naput{$\tau_f$}
}



\rput(80,15){

\rput(0,4){
\rput(-3.5,3){\scr $T$}
\rput(2,16){$A$}
}

\psline(0,0)(-4,5)(4,5)(0,0)
\psline(-2,5)(-2,15)
\pscircle(-2,15.5){0.5}
\psline(2,5)(2,17)

\rput(2,10){
\psframe*[linecolor=white](-1.5,-2)(1.5,2)
\psframe(-1.5,-2)(1.5,2)
\rput(0,0){\scr $f$}
}

\psline(0,0)(0,-2)
\rput(0,3){\scr $b_t$}
\rput(0,-4.5){$B$}

\rput(13,5){\Tmap{\tau_f}}

\rput(26,0){

\rput(0,8){
\rput(-3.5,7){\scr $T$}
\rput(2,12){$A$}

\psline(0,0)(-4,5)(4,5)(0,0)
\psline(-2,5)(-2,9)
\pscircle(-2,9.5){0.5}
\psline(2,5)(2,10)
\psline(0,0)(0,-10)
\rput(0,3){\scr $a_t$}
}


\rput(0,3){
\psframe*[linecolor=white](-1.5,-2)(1.5,2)
\psframe(-1.5,-2)(1.5,2)
\rput(0,0){\scr $f$}
}

\rput(0,-4.5){$B$}

}

\rput(13,-24){
\rput(0,3){

\psline(0,-5)(0,5)
\rput(0,7){$A$}
\rput(0,-7){$B$}

\psframe*[linecolor=white](-1.5,-2)(1.5,2)
\psframe(-1.5,-2)(1.5,2)
\rput(0,0){\scr $f$}
}
}

\rput(4,-8){
\pcline[doubleline=true,arrowinset=0.7,arrowlength=0.8, arrowsize=3.5pt 1.5]{-}(0,0)(4,-4)
}

\rput(23,-8){
\pcline[doubleline=true,arrowinset=0.7,arrowlength=0.8, arrowsize=3.5pt 1.5]{-}(-4,-4)(0,0)
}

}


\end{small}
\endpspicture\]

\noi Note that as above, we can make this consistent with the string diagrams: if we are regarding a 0-cell $A$ as a 2-functor $1 \tra A$, then a 1-cell $f:A \tra B$ is a strict transformation, and a 2-cell $f \Tra g$ is a modification.

We now turn to weak maps for the composite monad $TS$ arising from a distributive law.  We know that a $TS$-algebra structure on $A$ can be expressed as a pair $(a_s, a_t)$ where $a_s$ is an $S$-algebra structure on $A$ and $a_t$ is a $T$-algebra structure on $A$ and they satisfy the interaction axiom.  Theorem~\ref{weakTSmap} says that a weak map of $TS$-algebras
\[\psset{nodesep=2pt}
\left(\pspicture(-3,8)(14,17)
\rput(0,0){
\rput(0,14){\rnode{a1}{$TA$}}  
\rput(0,3){\rnode{a2}{$A$}}  
\ncline{->}{a1}{a2} \naput{{\scriptsize $a_t$}} 
}
\rput(6,8){,}
\rput(10,0){
\rput(0,14){\rnode{a1}{$SA$}}  
\rput(0,3){\rnode{a2}{$A$}}  
\ncline{->}{a1}{a2} \naput{{\scriptsize $a_s$}} 
}
\endpspicture\right)
\ltra
\psset{nodesep=2pt}
\left(\pspicture(-3,8)(14,17)
\rput(0,0){
\rput(0,14){\rnode{a1}{$TB$}}  
\rput(0,3){\rnode{a2}{$B$}}  
\ncline{->}{a1}{a2} \naput{{\scriptsize $b_t$}} 
}
\rput(6,8){,}
\rput(10,0){
\rput(0,14){\rnode{a1}{$SB$}}  
\rput(0,3){\rnode{a2}{$B$}}  
\ncline{->}{a1}{a2} \naput{{\scriptsize $b_s$}} 
}
\endpspicture\right)
\]
is given by
\begin{itemize}
\item a 1-cell $A \tmap{f} B$
\item 2-cells $\sigma_f$ and $\tau_f$ as shown below giving the structure of a weak map of underlying $S$-algebras and a weak map of underlying $T$-algebras,

\[\psset{unit=0.1cm,labelsep=2pt,nodesep=3pt,linewidth=0.8pt}
\pspicture(0,55)(80,77)
\begin{small}


\rput(0,60){

\rput(0,4){
\rput(-2,14){$S$}
\rput(2,14){$A$}
}

\psline(0,0)(-4,5)(4,5)(0,0)
\psline(-2,5)(-2,15)
\psline(2,5)(2,15)

\rput(2,10){
\psframe*[linecolor=white](-1.5,-2)(1.5,2)
\psframe(-1.5,-2)(1.5,2)
\rput(0,0){\scr $f$}
}

\psline(0,0)(0,-2)
\rput(0,3){\scr $b_s$}
\rput(0,-4.5){$B$}

\rput(13,5){\Tmap{\sigma_f}}

\rput(25,0){

\rput(0,8){
\rput(-2,10){$S$}
\rput(2,10){$A$}

\psline(0,0)(-4,5)(4,5)(0,0)
\psline(-2,5)(-2,7)
\psline(2,5)(2,7)
\psline(0,0)(0,-10)
\rput(0,3){\scr $a_s$}
}


\rput(0,3){
\psframe*[linecolor=white](-1.5,-2)(1.5,2)
\psframe(-1.5,-2)(1.5,2)
\rput(0,0){\scr $f$}
}

\rput(0,-4.5){$B$}

}

}


\rput(50,60){

\rput(0,4){
\rput(-2,14){$T$}
\rput(2,14){$A$}
}

\psline(0,0)(-4,5)(4,5)(0,0)
\psline(-2,5)(-2,15)
\psline(2,5)(2,15)

\rput(2,10){
\psframe*[linecolor=white](-1.5,-2)(1.5,2)
\psframe(-1.5,-2)(1.5,2)
\rput(0,0){\scr $f$}
}

\psline(0,0)(0,-2)
\rput(0,3){\scr $b_t$}
\rput(0,-4.5){$B$}

\rput(13,5){\Tmap{\tau_f}}

\rput(25,0){

\rput(0,8){
\rput(-2,10){$T$}
\rput(2,10){$A$}

\psline(0,0)(-4,5)(4,5)(0,0)
\psline(-2,5)(-2,7)
\psline(2,5)(2,7)
\psline(0,0)(0,-10)
\rput(0,3){\scr $a_t$}
}


\rput(0,3){
\psframe*[linecolor=white](-1.5,-2)(1.5,2)
\psframe(-1.5,-2)(1.5,2)
\rput(0,0){\scr $f$}
}

\rput(0,-4.5){$B$}

}

}


\end{small}
\endpspicture\]

\item satisfying the interaction axiom shown below.
\end{itemize}

\[\psset{unit=0.1cm,labelsep=2pt,nodesep=3pt,linewidth=0.8pt}
\pspicture(0,-35)(72,43)
\begin{small}


\rput(0,35){


\rput(0,0){

\rput(0,4){
\rput(-2,5){$S$}
\rput(2,5){$T$}
\rput(6,5){$A$}
}

\rput(0,3){
\psline(-2,2)(2,-2)
\psline[border=2pt](2,2)(-2,-2)
}


\rput(4,-13)
{

\psline(0,0)(-4,5)(4,5)(0,0)
\psline(-2,5)(-2,14)
\psline(2,5)(2,18)

\rput(2,10){
\psframe*[linecolor=white](-1.5,-2)(1.5,2)
\psframe(-1.5,-2)(1.5,2)
\rput(0,0){\scr $f$}
}

\psline(0,0)(0,-2)
\rput(0,3){\scr $b_s$}

\rput(-3,-8){
\psline(0,0)(-4.5,5)(4.5,5)(0,0)
\psline(-3,5)(-3,22)
\psline(3,5)(3,7)

\psline(0,0)(0,-2)
\rput(0,3){\scr $b_t$}
}

\rput(15,5){\Tmap{\sigma_f}}

}
}


\rput(30,0){

\rput(4,-13){

\rput(4,8){
\rput(-4,4){
\rput(-2,10){$S$}
\rput(2,10){$T$}
\rput(6,10){$A$}
}

\rput(0,-1){

\psline(-6,11)(-2,7)
\psline[border=2pt](-2,11)(-6,7)

\psline(0,0)(-4,5)(4,5)(0,0)
\psline(-2,5)(-2,7)
\psline(2,5)(2,11)
\psline(0,0)(0,-9)
\rput(0,3){\scr $a_s$}
}
}


\rput(4,2){
\psframe*[linecolor=white](-1.5,-2)(1.5,2)
\psframe(-1.5,-2)(1.5,2)
\rput(0,0){\scr $f$}
}


\rput(1,-8){
\psline(0,0)(-4.5,5)(4.5,5)(0,0)
\psline(-3,5)(-3,22)
\psline(3,5)(3,7)

\psline(0,0)(0,-2)
\rput(0,3){\scr $b_t$}
}

}

}

\rput(55,-8){\Tmap{\tau_f}}

}


\rput(70,42){

\rput(0,-3){
\rput(-2,5){$S$}
\rput(2,5){$T$}
\rput(6,5){$A$}
}

\rput(0,-4){
\psline(-2,2)(2,-2)
\psline[border=2pt](2,2)(-2,-2)
}


\rput(4,-13)
{

\psline(0,0)(-4,5)(4,5)(0,0)
\psline(-2,5)(-2,7)
\psline(2,5)(2,11)

\psline(0,0)(0,-2)
\rput(0,3){\scr $a_s$}

\rput(-3,-8){
\psline(0,0)(-4.5,5)(4.5,5)(0,0)
\psline(-3,5)(-3,15)
\psline(3,5)(3,7)

\psline(0,0)(0,-10)
\rput(0,3){\scr $a_t$}

\rput(0,-5){
\psframe*[linecolor=white](-1.5,-2)(1.5,2)
\psframe(-1.5,-2)(1.5,2)
\rput(0,0){\scr $f$}
}

}

}
}


\rput(2,9){
\pcline[doubleline=true,arrowinset=0.7,arrowlength=0.8, arrowsize=3.5pt 1.5]{-}(0,0)(0,-6)
}

\rput(0,-8){

\rput(-2,5){$S$}
\rput(2,5){$T$}
\rput(6,5){$A$}


\rput(4,-13)
{

\psline(0,0)(-4,5)(4,5)(0,0)
\psline(-2,5)(-2,15)
\psline(2,5)(2,15)

\rput(2,10){
\psframe*[linecolor=white](-1.5,-2)(1.5,2)
\psframe(-1.5,-2)(1.5,2)
\rput(0,0){\scr $f$}
}

\psline(0,0)(0,-2)
\rput(0,3){\scr $b_t$}

\rput(-3,-8){
\psline(0,0)(-4.5,5)(4.5,5)(0,0)
\psline(-3,5)(-3,23)
\psline(3,5)(3,7)

\psline(0,0)(0,-2)
\rput(0,3){\scr $b_s$}
}

\rput(15,5){\Tmap{\tau_f}}


\rput(30,0){

\rput(4,8){
\rput(-4,0){
\rput(-2,10){$S$}
\rput(2,10){$T$}
\rput(6,10){$A$}
}

\psline(0,0)(-4,5)(4,5)(0,0)
\psline(-2,5)(-2,7)
\psline(2,5)(2,7)
\psline(0,0)(0,-10)
\rput(0,3){\scr $a_t$}
}


\rput(4,3){
\psframe*[linecolor=white](-1.5,-2)(1.5,2)
\psframe(-1.5,-2)(1.5,2)
\rput(0,0){\scr $f$}
}


\rput(1,-8){
\psline(0,0)(-4.5,5)(4.5,5)(0,0)
\psline(-3,5)(-3,23)
\psline(3,5)(3,7)

\psline(0,0)(0,-2)
\rput(0,3){\scr $b_s$}
}

}

}

\rput(55,-8){\Tmap{\sigma_f}}

}


\rput(72,9){
\pcline[doubleline=true,arrowinset=0.7,arrowlength=0.8, arrowsize=3.5pt 1.5]{-}(0,0)(0,-6)
}

\rput(70,-1){

\rput(0,-7){
\rput(-2,5){$S$}
\rput(2,5){$T$}
\rput(6,5){$A$}
}


\rput(4,-13)
{

\psline(0,0)(-4,5)(4,5)(0,0)
\psline(-2,5)(-2,8)
\psline(2,5)(2,8)

\psline(0,0)(0,-2)
\rput(0,3){\scr $a_t$}

\rput(-3,-8){
\psline(0,0)(-4.5,5)(4.5,5)(0,0)
\psline(-3,5)(-3,16)
\psline(3,5)(3,7)

\psline(0,0)(0,-10)
\rput(0,3){\scr $a_s$}

\rput(0,-5){
\psframe*[linecolor=white](-1.5,-2)(1.5,2)
\psframe(-1.5,-2)(1.5,2)
\rput(0,0){\scr $f$}
}

}

}
}


\end{small}
\endpspicture\]

\begin{proof}
Now, \emph{a priori} a weak map of $TS$-algebras is a 1-cell $A \tmap{f} B$ and a 2-cell

\[\psset{unit=0.1cm,labelsep=2pt,nodesep=3pt,linewidth=0.8pt}
\pspicture(0,40)(30,63)
\begin{small}

\rput(0,50){

\rput(0,1){
\rput(-4,10){$T$}
\rput(0,10){$S$}
\rput(4,10){$A$}
}

\rput(0,-5){
\psline(0,0)(-6,5)(6,5)(0,0)
\psline(-4,5)(-4,13)
\psline(0,5)(0,13)
\psline(4,5)(4,13)
\psline(0,0)(0,-2)

\rput(4,9){
\psframe*[linecolor=white](-1.5,-2)(1.5,2)
\psframe(-1.5,-2)(1.5,2)
\rput(0,0){\scr $f$}
}

\rput(0,3){\scr $b_{ts}$}
}

\rput(15,5){\Tmap{\zeta_f}}

}

\rput(30,50){

\rput(0,1){
\rput(-4,10){$T$}
\rput(0,10){$S$}
\rput(4,10){$A$}
}

\rput(0,1){
\psline(0,0)(-6,5)(6,5)(0,0)
\psline(-4,5)(-4,7)
\psline(0,5)(0,7)
\psline(4,5)(4,7)
\psline(0,0)(0,-8)

\rput(0,-4){
\psframe*[linecolor=white](-1.5,-2)(1.5,2)
\psframe(-1.5,-2)(1.5,2)
\rput(0,0){\scr $f$}
}

\rput(0,3){\scr $a_{ts}$}
}

}

\end{small}
\endpspicture\]

\noi such that the following diagrams commute

\[\psset{unit=0.1cm,labelsep=2pt,nodesep=3pt,linewidth=0.8pt}
\pspicture(0,-90)(80,50)
\begin{small}




\rput(0,35){

\rput(-8,5){$T$}
\rput(-4,5){$S$}
\rput(0,5){$T$}
\rput(4,5){$S$}
\rput(8,5){$A$}

\rput(4,-13)
{

\rput(0,0){
\psline(0,0)(-6,5)(6,5)(0,0)
\psline(-4,5)(-4,15)
\psline(0,5)(0,15)
\psline(4,5)(4,15)
\psline(0,0)(0,-2)
\rput(0,3){\scr $b_{ts}$}
}

\rput(4,10){
\psframe*[linecolor=white](-1.5,-2)(1.5,2)
\psframe(-1.5,-2)(1.5,2)
\rput(0,0){\scr $f$}
}

\rput(-6,-8){
\psline(0,0)(-8,5)(8,5)(0,0)
\psline(-6,5)(-6,23)
\psline(-2,5)(-2,23)
\psline(6,5)(6,7)
\psline(0,0)(0,-2)
\rput(0,3){\scr $b_{ts}$}
}

\rput(16,5){\Tmap{\zeta_f}}

}

}


\rput(38,35){

\rput(-8,5){$T$}
\rput(-4,5){$S$}
\rput(0,5){$T$}
\rput(4,5){$S$}
\rput(8,5){$A$}

\rput(4,-13)
{

\rput(0,7){
\psline(0,0)(-6,5)(6,5)(0,0)
\psline(-4,5)(-4,8)
\psline(0,5)(0,8)
\psline(4,5)(4,8)
\psline(0,0)(0,-8)
\rput(0,3){\scr $a_{ts}$}
}

\rput(0,2){
\psframe*[linecolor=white](-1.5,-2)(1.5,2)
\psframe(-1.5,-2)(1.5,2)
\rput(0,0){\scr $f$}
}

\rput(-6,-8){
\psline(0,0)(-8,5)(8,5)(0,0)
\psline(-6,5)(-6,23)
\psline(-2,5)(-2,23)
\psline(6,5)(6,7)

\psline(0,0)(0,-2)
\rput(0,3){\scr $b_{ts}$}
}

\rput(16,5){\Tmap{\zeta_f}}

}

}


\rput(78,35){

\rput(-8,5){$T$}
\rput(-4,5){$S$}
\rput(0,5){$T$}
\rput(4,5){$S$}
\rput(8,5){$A$}

\rput(4,-13)
{

\rput(0,7){
\psline(0,0)(-6,5)(6,5)(0,0)
\psline(-4,5)(-4,8)
\psline(0,5)(0,8)
\psline(4,5)(4,8)
\psline(0,0)(0,-2)
\rput(0,3){\scr $a_{ts}$}
}

\rput(-6,0){
\psline(0,0)(-8,5)(8,5)(0,0)
\psline(-6,5)(-6,15)
\psline(-2,5)(-2,15)
\psline(6,5)(6,7)
\psline(0,0)(0,-10)
\rput(0,3){\scr $a_{ts}$}
}

\rput(-6,-5){
\psframe*[linecolor=white](-1.5,-2)(1.5,2)
\psframe(-1.5,-2)(1.5,2)
\rput(0,0){\scr $f$}
}

}

}

\rput(-2,5){
\pcline[doubleline=true,arrowinset=0.7,arrowlength=0.8, arrowsize=3.5pt 1.5]{-}(0,0)(0,-10)
}

\rput(76,5){
\pcline[doubleline=true,arrowinset=0.7,arrowlength=0.8, arrowsize=3.5pt 1.5]{-}(0,0)(0,-10)
}

\rput(14,-16){
\pcline[doubleline=true,arrowinset=0.7,arrowlength=0.8, arrowsize=3.5pt 1.5]{->}(0,0)(46,0)\naput{\scr $\zeta_f$}
}


\rput(0,-20){

\rput(0,0){

\rput(0,5){
\rput(-8,5){$T$}
\rput(-4,5){$S$}
\rput(0,5){$T$}
\rput(4,5){$S$}
\rput(8,5){$A$}
}

\rput(-2,5){
\psline(-6,2)(-6,-2)
\psline(-2,2)(2,-2)
\psline[border=2pt](2,2)(-2,-2)
\psline(6,2)(6,-2)
}

\rput(-6,1){
\psline(-2,2)(0,0)(2,2)
\psline(0,0)(0,-2)
}

\rput(2,1){
\psline(-2,2)(0,0)(2,2)
\psline(0,0)(0,-2)
}

\rput(-1,-8){

\rput(1,0){
\psline(0,0)(-10,5)(10,5)(0,0)
\psline(-6,5)(-6,7)
\psline(2,5)(2,7)
\psline(8,5)(8,15)
\psline(0,0)(0,-4)
\rput(0,3){\scr $b_{ts}$}
}

\rput(9,10){
\psframe*[linecolor=white](-1.5,-2)(1.5,2)
\psframe(-1.5,-2)(1.5,2)
\rput(0,0){\scr $f$}
}

}

}

}


\rput(78,-20){

\rput(0,0){

\rput(0,5){
\rput(-8,5){$T$}
\rput(-4,5){$S$}
\rput(0,5){$T$}
\rput(4,5){$S$}
\rput(8,5){$A$}
}

\rput(-2,5){
\psline(-6,2)(-6,-2)
\psline(-2,2)(2,-2)
\psline[border=2pt](2,2)(-2,-2)
\psline(6,2)(6,-2)
}

\rput(-6,1){
\psline(-2,2)(0,0)(2,2)
\psline(0,0)(0,-2)
}

\rput(2,1){
\psline(-2,2)(0,0)(2,2)
\psline(0,0)(0,-2)
}

\rput(-1,-8){

\rput(1,2){
\psline(0,0)(-10,5)(10,5)(0,0)
\psline(-6,5)(-6,7)
\psline(2,5)(2,7)
\psline(8,5)(8,13)
\psline(0,0)(0,-8)
\rput(0,3){\scr $a_{ts}$}
}

\rput(1,-2){
\psframe*[linecolor=white](-1.5,-2)(1.5,2)
\psframe(-1.5,-2)(1.5,2)
\rput(0,0){\scr $f$}
}

}

}

}



\rput(-85,-75){


\rput(110,15){

\rput(0,4){
\rput(-5.5,3){\scr $T$}
\rput(-1.5,3){\scr $S$}
\rput(4,16){$A$}
}

\psline(0,0)(-6,5)(6,5)(0,0)

\rput(-4,15){
\psline(0,0)(0,-10)
\pscircle(0,0.5){0.5}
}

\rput(0,15){
\psline(0,0)(0,-10)
\pscircle(0,0.5){0.5}
}

\psline(4,5)(4,18)

\rput(4,10){
\psframe*[linecolor=white](-1.5,-2)(1.5,2)
\psframe(-1.5,-2)(1.5,2)
\rput(0,0){\scr $f$}
}

\psline(0,0)(0,-2)
\rput(0,3){\scr $b_{ts}$}
\rput(0,-4.5){$B$}


\rput(15,5){\Tmap{\zeta_f}}

\rput(30,0){

\rput(0,6){
\rput(-5.5,7){\scr $T$}
\rput(-1.5,7){\scr $S$}
\rput(4,14){$A$}

\psline(0,0)(-6,5)(6,5)(0,0)

\rput(-4,9){
\psline(0,0)(0,-4)
\pscircle(0,0.5){0.5}
}

\rput(0,9){
\psline(0,0)(0,-4)
\pscircle(0,0.5){0.5}
}

\psline(4,5)(4,12)

\rput(0,3){\scr $a_{ts}$}
}


\psline(0,6)(0,-2)

\rput(0,2){
\psframe*[linecolor=white](-1.5,-2)(1.5,2)
\psframe(-1.5,-2)(1.5,2)
\rput(0,0){\scr $f$}
}

\rput(0,-4.5){$B$}

}

\rput(15,-24){
\rput(0,3){

\psline(0,-5)(0,5)
\rput(0,7){$A$}
\rput(0,-7){$B$}

\psframe*[linecolor=white](-1.5,-2)(1.5,2)
\psframe(-1.5,-2)(1.5,2)
\rput(0,0){\scr $f$}
}
}

\rput(5,-8){
\pcline[doubleline=true,arrowinset=0.7,arrowlength=0.8, arrowsize=3.5pt 1.5]{-}(0,0)(4,-4)
}

\rput(25,-8){
\pcline[doubleline=true,arrowinset=0.7,arrowlength=0.8, arrowsize=3.5pt 1.5]{-}(-4,-4)(0,0)
}

}


}

\end{small}
\endpspicture\]

%
%
%
%
%
%
%
%
%
%
%
%
%
%
%
%
%
%
%
%
%
%
%
%
%
%
%
%
%
%
%

\noi First we will show that this gives a pair $(\sigma_f, \tau_f)$ satisfying interaction.  We begin by expressing the $TS$-algebras $a_{ts}$ and $b_{ts}$  as $\lambda$-distributive pairs:

\[\psset{unit=0.1cm,labelsep=2pt,nodesep=3pt,linewidth=0.8pt}
\pspicture(0,-10)(70,20)
\begin{small}


\rput(0,0){

\rput(0,4){
\rput(-5.5,3){\scr $T$}
\rput(0,11){$S$}
\rput(4,11){$A$}
}

\psline(0,0)(-6,5)(6,5)(0,0)

\rput(-4,10){
\psline(0,0)(0,-5)
\pscircle(0,0.5){0.5}
}

\psline(4,5)(4,13)
\psline(0,5)(0,13)

\psline(0,0)(0,-2)
\rput(0,3){\scr $a_{S}$}
\rput(0,-4.5){$A$}

}


\rput(18,0){

\rput(0,4){
\rput(-4,11){$T$}
\rput(-1.5,3){\scr $S$}
\rput(4,11){$A$}
}

\psline(0,0)(-6,5)(6,5)(0,0)

\rput(0,10){
\psline(0,0)(0,-5)
\pscircle(0,0.5){0.5}
}

\psline(4,5)(4,13)
\psline(-4,5)(-4,13)

\psline(0,0)(0,-2)
\rput(0,3){\scr $a_{T}$}
\rput(0,-4.5){$A$}

}

\rput(-9,5){
$\left\{\begin{array}{c}\vs{7}\end{array}\right.$}

\rput(26,5){
$\left.\begin{array}{c}\vs{7}\end{array}\right\}$}

\rput(50,0){
\rput(-9,5){
$\left\{\begin{array}{c}\vs{7}\end{array}\right.$}

\rput(26,5){
$\left.\begin{array}{c}\vs{7}\end{array}\right\}$}
}


\rput(50,0){

\rput(0,4){
\rput(-5.5,3){\scr $T$}
\rput(0,11){$S$}
\rput(4,11){$B$}
}

\psline(0,0)(-6,5)(6,5)(0,0)

\rput(-4,10){
\psline(0,0)(0,-5)
\pscircle(0,0.5){0.5}
}

\psline(4,5)(4,13)
\psline(0,5)(0,13)

\psline(0,0)(0,-2)
\rput(0,3){\scr $b_{S}$}
\rput(0,-4.5){$B$}

}


\rput(70,0){

\rput(0,4){
\rput(-4,11){$T$}
\rput(-1.5,3){\scr $S$}
\rput(4,11){$B$}
}

\psline(0,0)(-6,5)(6,5)(0,0)

\rput(0,10){
\psline(0,0)(0,-5)
\pscircle(0,0.5){0.5}
}

\psline(4,5)(4,13)
\psline(-4,5)(-4,13)

\psline(0,0)(0,-2)
\rput(0,3){\scr $b_{T}$}
\rput(0,-4.5){$B$}

}

\end{small}
\endpspicture\]

Next we make $\sigma$ for the $S$ components and $\tau$ for the $T$ components.

\[\psset{unit=0.1cm,labelsep=2pt,nodesep=3pt,linewidth=0.8pt}
\pspicture(0,-10)(80,20)
\begin{small}


\rput(0,0){

\rput(0,4){
\rput(-5.5,3){\scr $T$}
\rput(0,14){$S$}
\rput(4,14){$A$}
}

\psline(0,0)(-6,5)(6,5)(0,0)

\rput(-4,12){
\psline(0,0)(0,-7)
\pscircle(0,0.5){0.5}
}

\psline(4,5)(4,15)
\psline(0,5)(0,15)

\rput(4,10){
\psframe*[linecolor=white](-1.5,-2)(1.5,2)
\psframe(-1.5,-2)(1.5,2)
\rput(0,0){\scr $f$}
}

\psline(0,0)(0,-4)
\rput(0,3){\scr $b_{S}$}
\rput(0,-6){$B$}

}


\rput(25,0){

\rput(0,8){
\rput(-5.5,3){\scr $T$}
\rput(0,10){$S$}
\rput(4,10){$A$}
}

\rput(0,4){
\psline(0,0)(-6,5)(6,5)(0,0)
\rput(0,3){\scr $a_{S}$}

}

\rput(-4,13){
\psline(0,0)(0,-4)
\pscircle(0,0.5){0.5}
}

\rput(0,4){
\psline(4,5)(4,11)
\psline(0,5)(0,11)
}

\psline(0,4)(0,-4)
\rput(0,-6){$B$}

\rput(0,0){
\psframe*[linecolor=white](-1.5,-2)(1.5,2)
\psframe(-1.5,-2)(1.5,2)
\rput(0,0){\scr $f$}
}

}

\rput(13,5){\Tmap{\sigma_f}}


\rput(50,0){

\rput(0,4){
\rput(-4,14){$T$}
\rput(-1.5,3){\scr $S$}
\rput(4,14){$A$}
}

\psline(0,0)(-6,5)(6,5)(0,0)

\rput(0,12){
\psline(0,0)(0,-7)
\pscircle(0,0.5){0.5}
}

\psline(4,5)(4,15)
\psline(-4,5)(-4,15)

\rput(4,10){
\psframe*[linecolor=white](-1.5,-2)(1.5,2)
\psframe(-1.5,-2)(1.5,2)
\rput(0,0){\scr $f$}
}

\psline(0,0)(0,-4)
\rput(0,3){\scr $b_{T}$}
\rput(0,-6){$B$}

\rput(13,5){\Tmap{\tau_f}}

}


\rput(75,0){

\rput(0,8){
\rput(-4,10){$T$}
\rput(-1.5,3){\scr $S$}
\rput(4,10){$A$}
}

\rput(0,4){
\psline(0,0)(-6,5)(6,5)(0,0)
\rput(0,3){\scr $a_{T}$}

}

\rput(0,13){
\psline(0,0)(0,-4)
\pscircle(0,0.5){0.5}
}

\rput(0,4){
\psline(4,5)(4,11)
\psline(-4,5)(-4,11)
}

\psline(0,4)(0,-4)
\rput(0,-6){$B$}

\rput(0,0){
\psframe*[linecolor=white](-1.5,-2)(1.5,2)
\psframe(-1.5,-2)(1.5,2)
\rput(0,0){\scr $f$}
}

}

\end{small}
\endpspicture\]


\noi We check the interaction diagram:


\[\psset{unit=0.1cm,labelsep=2pt,nodesep=3pt,linewidth=0.8pt}
\pspicture(0,-100)(100,41)
\begin{small}


\rput(0,35){

\rput(0,5){

\rput(0,-1){
\rput(-8,5){$S$}
\rput(4,5){$T$}
\rput(8,5){$A$}
}

\rput(-5.5,-19.5){\scr $S$}
\rput(-1.5,-11.5){\scr $T$}

}

\rput(4,-13)
{

\rput(0,0){
\psline(0,0)(-6,5)(6,5)(0,0)
\psline(-4,5)(-4,9)
\pscircle(-4,9.5){0.5}
\psline(0,5)(0,13)
\psline(4,5)(4,19)
\psline(0,0)(0,-2)
\rput(0,3){\scr $b_{ts}$}

\psline(0,13)(-12,19)
\psline[border=2pt](0,19)(-12,13)

}

\rput(4,10){
\psframe*[linecolor=white](-1.5,-2)(1.5,2)
\psframe(-1.5,-2)(1.5,2)
\rput(0,0){\scr $f$}
}

\rput(-6,-8){
\psline(0,0)(-8,5)(8,5)(0,0)
\psline(-6,5)(-6,21)
\psline(-2,5)(-2,9)
\pscircle(-2,9.5){0.5}
\psline(6,5)(6,7)
\psline(0,0)(0,-2)
\rput(0,3){\scr $b_{ts}$}
}

\rput(16,5){\Tmap{\zeta_f}}

}

}


\rput(38,35){


\rput(4,-13)
{

\rput(0,7){
\psline(0,0)(-6,5)(6,5)(0,0)
\psline(-4,5)(-4,8)
\pscircle(-4,8.5){0.5}
\psline(0,5)(0,9)
\psline(4,5)(4,15)
\psline(0,0)(0,-8)
\rput(0,3){\scr $a_{ts}$}

\rput(0,-4){
\psline(0,13)(-12,19)
\psline[border=2pt](0,19)(-12,13)
}

}

\rput(0,2){
\psframe*[linecolor=white](-1.5,-2)(1.5,2)
\psframe(-1.5,-2)(1.5,2)
\rput(0,0){\scr $f$}
}

\rput(-6,-8){
\psline(0,0)(-8,5)(8,5)(0,0)
\psline(-6,5)(-6,24)
\psline(-2,5)(-2,9)
\pscircle(-2,9.5){0.5}
\psline(6,5)(6,7)

\psline(0,0)(0,-2)
\rput(0,3){\scr $b_{ts}$}
}

\rput(16,5){\Tmap{\zeta_f}}

}

}


\rput(78,35){


\rput(4,-13)
{

\rput(0,7){
\psline(0,0)(-6,5)(6,5)(0,0)
\psline(-4,5)(-4,8)
\pscircle(-4,8.5){0.5}
\psline(0,5)(0,9)
\psline(4,5)(4,15)
\psline(0,0)(0,-2)
\rput(0,3){\scr $a_{ts}$}

\rput(0,-4){
\psline(0,13)(-12,19)
\psline[border=2pt](0,19)(-12,13)
}

}

\rput(-6,0){
\psline(0,0)(-8,5)(8,5)(0,0)
\psline(-6,5)(-6,16)
\psline(-2,5)(-2,8)
\pscircle(-2,8.5){0.5}
\psline(6,5)(6,7)
\psline(0,0)(0,-10)
\rput(0,3){\scr $a_{ts}$}
}

\rput(-6,-5){
\psframe*[linecolor=white](-1.5,-2)(1.5,2)
\psframe(-1.5,-2)(1.5,2)
\rput(0,0){\scr $f$}
}

}

}

\rput(-2,10){
\pcline[doubleline=true,arrowinset=0.7,arrowlength=0.8, arrowsize=3.5pt 1.5]{-}(0,0)(0,-5)
}

\rput(76,11){
\pcline[doubleline=true,arrowinset=0.7,arrowlength=0.8, arrowsize=3.5pt 1.5]{-}(0,0)(0,-5)
}

\rput(18,-16){
\pcline[doubleline=true,arrowinset=0.7,arrowlength=0.8, arrowsize=3.5pt 1.5]{->}(0,0)(42,0)\naput{\scr $\zeta_f$}
}


\rput(0,-12){

\rput(0,0){

\rput(0,5){
}

\rput(-2,3){
\psline(-6,6)(-6,-2)
\psline(-2,2)(2,-2)
\psline[border=2pt](2,2)(-2,-2)
\psline(6,6)(6,-2)

\psline(2,2)(2,4)\pscircle(2,4.5){0.5}
\psline(-2,2)(-2,4)\pscircle(-2,4.5){0.5}

\rput(6,-7){
\psline(0,13)(-12,19)
\psline[border=2pt](0,19)(-12,13)
}

}

\rput(-6,-1){
\psline(-2,2)(0,0)(2,2)
\psline(0,0)(0,-2)
}

\rput(2,-1){
\psline(-2,2)(0,0)(2,2)
\psline(0,0)(0,-2)
}

\rput(-1,-8){

\rput(1,0){
\psline(0,0)(-10,5)(10,5)(0,0)
\psline(-6,5)(-6,7)
\psline(2,5)(2,7)
\psline(8,5)(8,23)
\psline(0,0)(0,-4)
\rput(0,3){\scr $b_{ts}$}
}

\rput(9,10){
\psframe*[linecolor=white](-1.5,-2)(1.5,2)
\psframe(-1.5,-2)(1.5,2)
\rput(0,0){\scr $f$}
}

}

}

}


\rput(78,-12){

\rput(0,0){

\rput(0,5){
}


\rput(-2,5){
\psline(-6,6)(-6,-2)
\psline(-2,2)(2,-2)
\psline[border=2pt](2,2)(-2,-2)
\psline(6,6)(6,-2)

\psline(2,2)(2,4)\pscircle(2,4.5){0.5}
\psline(-2,2)(-2,4)\pscircle(-2,4.5){0.5}

\rput(6,-7){
\psline(0,13)(-12,19)
\psline[border=2pt](0,19)(-12,13)
}

}

\rput(-6,1){
\psline(-2,2)(0,0)(2,2)
\psline(0,0)(0,-2)
}

\rput(2,1){
\psline(-2,2)(0,0)(2,2)
\psline(0,0)(0,-2)
}

\rput(-1,-8){

\rput(1,2){
\psline(0,0)(-10,5)(10,5)(0,0)
\psline(-6,5)(-6,7)
\psline(2,5)(2,7)
\psline(8,5)(8,23)
\psline(0,0)(0,-8)
\rput(0,3){\scr $a_{ts}$}
}

\rput(1,-2){
\psframe*[linecolor=white](-1.5,-2)(1.5,2)
\psframe(-1.5,-2)(1.5,2)
\rput(0,0){\scr $f$}
}

}

}

}


\rput(0,-42){

\rput(0,1){
}

\rput(0,-5){
\psline(0,0)(-9,5)(9,5)(0,0)
\psline(-6,5)(-6,11)
\psline(0,5)(0,11)
\psline(6,5)(6,15)
\psline(0,0)(0,-2)

\rput(0,11){
\psline(0,0)(-6,4)
\psline[border=2pt](-6,0)(0,4)
}

\rput(6,9){
\psframe*[linecolor=white](-1.5,-2)(1.5,2)
\psframe(-1.5,-2)(1.5,2)
\rput(0,0){\scr $f$}
}

\rput(0,3){\scr $b_{ts}$}
}

\rput(0,17){
\pcline[doubleline=true,arrowinset=0.7,arrowlength=0.8, arrowsize=3.5pt 1.5]{-}(0,0)(0,-5)
}

\rput(78,16){
\pcline[doubleline=true,arrowinset=0.7,arrowlength=0.8, arrowsize=3.5pt 1.5]{-}(0,0)(0,-5)
}

\rput(18,0){
\pcline[doubleline=true,arrowinset=0.7,arrowlength=0.8, arrowsize=3.5pt 1.5]{->}(0,0)(42,0)\naput{\scr $\zeta_f$}
}

}


\rput(78,-44){

\rput(0,1){
}

\rput(0,1){
\psline(0,0)(-9,5)(9,5)(0,0)
\psline(-6,5)(-6,7)
\psline(0,5)(0,7)
\psline(6,5)(6,11)
\psline(0,0)(0,-8)

\rput(0,7){
\psline(0,0)(-6,4)
\psline[border=2pt](-6,0)(0,4)
}

\rput(0,-4){
\psframe*[linecolor=white](-1.5,-2)(1.5,2)
\psframe(-1.5,-2)(1.5,2)
\rput(0,0){\scr $f$}
}

\rput(0,3){\scr $a_{ts}$}
}

}


\rput(0,-68){

\rput(0,0){

\rput(0,5){
}

\rput(-2,3){
\psline(-6,1)(-6,-2)\pscircle(-6,1.5){0.5}
\psline(-2,2)(2,-2)
\psline[border=2pt](2,2)(-2,-2)
\psline(6,1)(6,-2)\pscircle(6,1.5){0.5}

\psline(-2,2)(-2,6)
\psline(2,2)(2,6)


}

\rput(-6,-1){
\psline(-2,2)(0,0)(2,2)
\psline(0,0)(0,-2)
}

\rput(2,-1){
\psline(-2,2)(0,0)(2,2)
\psline(0,0)(0,-2)
}

\rput(-1,-8){

\rput(1,0){
\psline(0,0)(-10,5)(10,5)(0,0)
\psline(-6,5)(-6,7)
\psline(2,5)(2,7)
\psline(8,5)(8,17)
\psline(0,0)(0,-4)
\rput(0,3){\scr $b_{ts}$}
}

\rput(9,10){
\psframe*[linecolor=white](-1.5,-2)(1.5,2)
\psframe(-1.5,-2)(1.5,2)
\rput(0,0){\scr $f$}
}

}

}

\rput(0,17){
\pcline[doubleline=true,arrowinset=0.7,arrowlength=0.8, arrowsize=3.5pt 1.5]{-}(0,0)(0,-5)
}

\rput(78,17){
\pcline[doubleline=true,arrowinset=0.7,arrowlength=0.8, arrowsize=3.5pt 1.5]{-}(0,0)(0,-5)
}

\rput(18,0){
\pcline[doubleline=true,arrowinset=0.7,arrowlength=0.8, arrowsize=3.5pt 1.5]{->}(0,0)(42,0)\naput{\scr $\zeta_f$}
}

}


\rput(78,-68){

\rput(0,0){

\rput(0,5){
}


%
%
%
%
%

\rput(-2,5){
\psline(-6,1)(-6,-2)\pscircle(-6,1.5){0.5}
\psline(-2,2)(2,-2)
\psline[border=2pt](2,2)(-2,-2)
\psline(6,1)(6,-2)\pscircle(6,1.5){0.5}

\psline(-2,2)(-2,6)
\psline(2,2)(2,6)


}

\rput(-6,1){
\psline(-2,2)(0,0)(2,2)
\psline(0,0)(0,-2)
}

\rput(2,1){
\psline(-2,2)(0,0)(2,2)
\psline(0,0)(0,-2)
}

\rput(-1,-8){

\rput(1,2){
\psline(0,0)(-10,5)(10,5)(0,0)
\psline(-6,5)(-6,7)
\psline(2,5)(2,7)
\psline(8,5)(8,17)
\psline(0,0)(0,-8)
\rput(0,3){\scr $a_{ts}$}
}

\rput(1,-2){
\psframe*[linecolor=white](-1.5,-2)(1.5,2)
\psframe(-1.5,-2)(1.5,2)
\rput(0,0){\scr $f$}
}

}

}

}


\rput(0,-90){


\rput(4,-13)
{

\rput(0,0){
\psline(0,0)(-6,5)(6,5)(0,0)
\psline(-4,5)(-4,15)
\psline(0,5)(0,10)\pscircle(0,10.5){0.5}
\psline(4,5)(4,15)
\psline(0,0)(0,-2)
\rput(0,3){\scr $b_{ts}$}
}

\rput(4,10){
\psframe*[linecolor=white](-1.5,-2)(1.5,2)
\psframe(-1.5,-2)(1.5,2)
\rput(0,0){\scr $f$}
}

\rput(-6,-8){
\psline(0,0)(-8,5)(8,5)(0,0)
\psline(-6,5)(-6,10)\pscircle(-6,10.5){0.5}
\psline(-2,5)(-2,23)
\psline(6,5)(6,7)
\psline(0,0)(0,-2)
\rput(0,3){\scr $b_{ts}$}
}

\rput(16,5){\Tmap{\zeta_f}}

}

\rput(0,8){
\pcline[doubleline=true,arrowinset=0.7,arrowlength=0.8, arrowsize=3.5pt 1.5]{-}(0,0)(0,-5)
}

\rput(78,8){
\pcline[doubleline=true,arrowinset=0.7,arrowlength=0.8, arrowsize=3.5pt 1.5]{-}(0,0)(0,-5)
}

}


\rput(38,-90){


\rput(4,-13)
{

\rput(0,7){
\psline(0,0)(-6,5)(6,5)(0,0)
\psline(-4,5)(-4,11)
\psline(0,5)(0,8)\pscircle(0,8.5){0.5}
\psline(4,5)(4,11)
\psline(0,0)(0,-8)
\rput(0,3){\scr $a_{ts}$}
}

\rput(0,2){
\psframe*[linecolor=white](-1.5,-2)(1.5,2)
\psframe(-1.5,-2)(1.5,2)
\rput(0,0){\scr $f$}
}

\rput(-6,-8){
\psline(0,0)(-8,5)(8,5)(0,0)
\psline(-6,5)(-6,10)\pscircle(-6,10.5){0.5}
\psline(-2,5)(-2,26)
\psline(6,5)(6,7)

\psline(0,0)(0,-2)
\rput(0,3){\scr $b_{ts}$}
}

\rput(16,5){\Tmap{\zeta_f}}

}

}


\rput(78,-91){


\rput(4,-13)
{

\rput(0,7){
\psline(0,0)(-6,5)(6,5)(0,0)
\psline(-4,5)(-4,10)
\psline(0,5)(0,8)\pscircle(0,8.5){0.5}
\psline(4,5)(4,10)
\psline(0,0)(0,-2)
\rput(0,3){\scr $a_{ts}$}
}

\rput(-6,0){
\psline(0,0)(-8,5)(8,5)(0,0)
\psline(-6,5)(-6,10)\pscircle(-6,10.5){0.5}
\psline(-2,5)(-2,17)
\psline(6,5)(6,7)
\psline(0,0)(0,-10)
\rput(0,3){\scr $a_{ts}$}
}

\rput(-6,-5){
\psframe*[linecolor=white](-1.5,-2)(1.5,2)
\psframe(-1.5,-2)(1.5,2)
\rput(0,0){\scr $f$}
}

}

}

\end{small}
\endpspicture\]

\vs{4}

Conversely suppose we have a pair $(\sigma_f, \tau_f)$ as below satisfying the interaction diagram.

\[\psset{unit=0.1cm,labelsep=2pt,nodesep=3pt,linewidth=0.8pt}
\pspicture(0,50)(90,78)
\begin{small}


\rput(0,60){

\rput(0,4){
\rput(-2,14){$S$}
\rput(2,14){$A$}
}

\psline(0,0)(-4,5)(4,5)(0,0)
\psline(-2,5)(-2,15)
\psline(2,5)(2,15)

\rput(2,10){
\psframe*[linecolor=white](-1.5,-2)(1.5,2)
\psframe(-1.5,-2)(1.5,2)
\rput(0,0){\scr $f$}
}

\psline(0,0)(0,-2)
\rput(0,3){\scr $b_s$}
\rput(0,-4.5){$B$}

\rput(13,5){\Tmap{\sigma_f}}

\rput(25,0){

\rput(0,8){
\rput(-2,10){$S$}
\rput(2,10){$A$}

\psline(0,0)(-4,5)(4,5)(0,0)
\psline(-2,5)(-2,7)
\psline(2,5)(2,7)
\psline(0,0)(0,-10)
\rput(0,3){\scr $a_s$}
}


\rput(0,3){
\psframe*[linecolor=white](-1.5,-2)(1.5,2)
\psframe(-1.5,-2)(1.5,2)
\rput(0,0){\scr $f$}
}

\rput(0,-4.5){$B$}

}

}


\rput(60,60){

\rput(0,0){

\rput(0,4){
\rput(-2,14){$T$}
\rput(2,14){$A$}
}

\psline(0,0)(-4,5)(4,5)(0,0)
\psline(-2,5)(-2,15)
\psline(2,5)(2,15)

\rput(2,10){
\psframe*[linecolor=white](-1.5,-2)(1.5,2)
\psframe(-1.5,-2)(1.5,2)
\rput(0,0){\scr $f$}
}

\psline(0,0)(0,-2)
\rput(0,3){\scr $b_t$}
\rput(0,-4.5){$B$}

\rput(13,5){\Tmap{\tau_f}}
}

\rput(25,0){

\rput(0,8){
\rput(-2,10){$T$}
\rput(2,10){$A$}

\psline(0,0)(-4,5)(4,5)(0,0)
\psline(-2,5)(-2,7)
\psline(2,5)(2,7)
\psline(0,0)(0,-10)
\rput(0,3){\scr $a_t$}
}


\rput(0,3){
\psframe*[linecolor=white](-1.5,-2)(1.5,2)
\psframe(-1.5,-2)(1.5,2)
\rput(0,0){\scr $f$}
}

\rput(0,-4.5){$B$}

}

}


\end{small}
\endpspicture\]

\noi We show that these result in a weak map of $TS$-algebras. First we form the 2-cell $\zeta_f$ as shown below.

\[\psset{unit=0.1cm,labelsep=2pt,nodesep=3pt,linewidth=0.8pt}
\pspicture(0,-10)(70,27)
\begin{small}



\rput(0,20){

\rput(0,0){

\rput(-2,5){$T$}
\rput(2,5){$S$}
\rput(6,5){$A$}


\rput(4,-13)
{

\psline(0,0)(-4,5)(4,5)(0,0)
\psline(-2,5)(-2,15)
\psline(2,5)(2,15)

\rput(2,10){
\psframe*[linecolor=white](-1.5,-2)(1.5,2)
\psframe(-1.5,-2)(1.5,2)
\rput(0,0){\scr $f$}
}

\psline(0,0)(0,-2)
\rput(0,3){\scr $b_s$}

\rput(-3,-8){
\psline(0,0)(-4.5,5)(4.5,5)(0,0)
\psline(-3,5)(-3,23)
\psline(3,5)(3,7)

\psline(0,0)(0,-2)
\rput(0,3){\scr $b_t$}
}

\rput(13,5){\Tmap{\sigma_f}}

}


\rput(25,20){

\rput(4,-33){

\rput(4,8){
\rput(-4,0){
\rput(-2,10){$T$}
\rput(2,10){$S$}
\rput(6,10){$A$}
}

\psline(0,0)(-4,5)(4,5)(0,0)
\psline(-2,5)(-2,7)
\psline(2,5)(2,7)
\psline(0,0)(0,-10)
\rput(0,3){\scr $a_s$}
}


\rput(4,3){
\psframe*[linecolor=white](-1.5,-2)(1.5,2)
\psframe(-1.5,-2)(1.5,2)
\rput(0,0){\scr $f$}
}


\rput(1,-8){
\psline(0,0)(-4.5,5)(4.5,5)(0,0)
\psline(-3,5)(-3,23)
\psline(3,5)(3,7)

\psline(0,0)(0,-2)
\rput(0,3){\scr $b_t$}
}

}

}
}

\rput(46,-8){\Tmap{\tau_f}}

}



\rput(60,20){

\rput(0,7){

\rput(0,-7){
\rput(-2,5){$T$}
\rput(2,5){$S$}
\rput(6,5){$A$}
}


\rput(4,-13)
{

\psline(0,0)(-4,5)(4,5)(0,0)
\psline(-2,5)(-2,8)
\psline(2,5)(2,8)

\psline(0,0)(0,-2)
\rput(0,3){\scr $a_s$}

\rput(-3,-8){
\psline(0,0)(-4.5,5)(4.5,5)(0,0)
\psline(-3,5)(-3,16)
\psline(3,5)(3,7)

\psline(0,0)(0,-10)
\rput(0,3){\scr $a_t$}

\rput(0,-5){
\psframe*[linecolor=white](-1.5,-2)(1.5,2)
\psframe(-1.5,-2)(1.5,2)
\rput(0,0){\scr $f$}
}

}

}
}
}

\end{small}
\endpspicture\]

We check the axioms.  

\[\psset{unit=0.09cm,labelsep=2pt,nodesep=3pt,linewidth=0.8pt}
\pspicture(-10,-60)(100,45)
\begin{small}


\rput(0,70){

\rput(-8,5){\scr $T$}
\rput(-5,5){\scr $S$}
\rput(-2,5){\scr $T$}
\rput(1,5){\scr $S$}
\rput(7,5){\scr $A$}


\rput(4,-13)
{

\psline(0,0)(-4.5,5)(4.5,5)(0,0)
\psline(-3,5)(-3,15)
\psline(3,5)(3,15)

\rput(3,10){
\psframe*[linecolor=white](-1.5,-2)(1.5,2)
\psframe(-1.5,-2)(1.5,2)
\rput(0,0){\scr $f$}
}

\psline(0,0)(0,-2)
\rput(0,3){\scr $b_s$}

\rput(-3,-8){
\psline(0,0)(-4.5,5)(4.5,5)(0,0)
\psline(-3,5)(-3,23)
\psline(3,5)(3,7)

\psline(0,0)(0,-2)
\rput(0,3){\scr $b_t$}
}

\rput(-6,-16){
\psline(0,0)(-4.5,5)(4.5,5)(0,0)
\psline(-3,5)(-3,31)
\psline(3,5)(3,7)

\psline(0,0)(0,-2)
\rput(0,3){\scr $b_s$}
}

\rput(-9,-24){
\psline(0,0)(-4.5,5)(4.5,5)(0,0)
\psline(-3,5)(-3,39)
\psline(3,5)(3,7)

\psline(0,0)(0,-2)
\rput(0,3){\scr $b_t$}
}

\rput(8,-5){\msTmap{\sigma_f}}

\rput(-6,-29){
\pcline[doubleline=true,arrowinset=0.7,arrowlength=0.8, arrowsize=3.5pt 1.5]{-}(0,0)(0,-6)
}

}}


\rput(28,70){



\rput(4,-13)
{

\rput(0,7){
\psline(0,0)(-4.5,5)(4.5,5)(0,0)
\psline(-3,5)(-3,8)
\psline(3,5)(3,8)
\psline(0,0)(0,-8)
\rput(0,3){\scr $a_s$}
}

\rput(0,2){
\psframe*[linecolor=white](-1.5,-2)(1.5,2)
\psframe(-1.5,-2)(1.5,2)
\rput(0,0){\scr $f$}
}

\rput(-3,-8){
\psline(0,0)(-4.5,5)(4.5,5)(0,0)
\psline(-3,5)(-3,23)
\psline(3,5)(3,7)
\psline(0,0)(0,-2)
\rput(0,3){\scr $b_t$}
}

\rput(-6,-16){
\psline(0,0)(-4.5,5)(4.5,5)(0,0)
\psline(-3,5)(-3,31)
\psline(3,5)(3,7)

\psline(0,0)(0,-2)
\rput(0,3){\scr $b_s$}
}

\rput(-9,-24){
\psline(0,0)(-4.5,5)(4.5,5)(0,0)
\psline(-3,5)(-3,39)
\psline(3,5)(3,7)

\psline(0,0)(0,-2)
\rput(0,3){\scr $b_t$}
}

\rput(8,-5){\msTmap{\tau_f}}

\rput(-6,-29){
\pcline[doubleline=true,arrowinset=0.7,arrowlength=0.8, arrowsize=3.5pt 1.5]{-}(0,0)(0,-6)
}

}}


\rput(54,70){



\rput(4,-13)
{

\rput(0,7){
\psline(0,0)(-4.5,5)(4.5,5)(0,0)
\psline(-3,5)(-3,8)
\psline(3,5)(3,8)
\psline(0,0)(0,-2)
\rput(0,3){\scr $a_s$}
}

\rput(-3,-1){
\psline(0,0)(-4.5,5)(4.5,5)(0,0)
\psline(-3,5)(-3,16)
\psline(3,5)(3,7)
\psline(0,0)(0,-8)
\rput(0,3){\scr $a_t$}
}

\rput(-3,-6){
\psframe*[linecolor=white](-1.5,-2)(1.5,2)
\psframe(-1.5,-2)(1.5,2)
\rput(0,0){\scr $f$}
}

\rput(-6,-16){
\psline(0,0)(-4.5,5)(4.5,5)(0,0)
\psline(-3,5)(-3,31)
\psline(3,5)(3,7)

\psline(0,0)(0,-2)
\rput(0,3){\scr $b_s$}
}

\rput(-9,-24){
\psline(0,0)(-4.5,5)(4.5,5)(0,0)
\psline(-3,5)(-3,39)
\psline(3,5)(3,7)

\psline(0,0)(0,-2)
\rput(0,3){\scr $b_t$}
}

\rput(7,-5){\msTmap{\sigma_f}}

}}


\rput(80,70){



\rput(4,-13)
{

\rput(0,7){
\psline(0,0)(-4.5,5)(4.5,5)(0,0)
\psline(-3,5)(-3,8)
\psline(3,5)(3,8)
\psline(0,0)(0,-2)
\rput(0,3){\scr $a_s$}
}

\rput(-3,-1){
\psline(0,0)(-4.5,5)(4.5,5)(0,0)
\psline(-3,5)(-3,16)
\psline(3,5)(3,7)
\psline(0,0)(0,-2)
\rput(0,3){\scr $a_t$}
}

\rput(-6,-9){
\psline(0,0)(-4.5,5)(4.5,5)(0,0)
\psline(-3,5)(-3,24)
\psline(3,5)(3,7)

\psline(0,0)(0,-8)
\rput(0,3){\scr $a_s$}
}

\rput(-6,-14){
\psframe*[linecolor=white](-1.5,-2)(1.5,2)
\psframe(-1.5,-2)(1.5,2)
\rput(0,0){\scr $f$}
}

\rput(-9,-24){
\psline(0,0)(-4.5,5)(4.5,5)(0,0)
\psline(-3,5)(-3,39)
\psline(3,5)(3,7)

\psline(0,0)(0,-2)
\rput(0,3){\scr $b_t$}
}

\rput(7,-5){\msTmap{\tau_f}}

\rput(-7,-29){
\pcline[doubleline=true,arrowinset=0.7,arrowlength=0.8, arrowsize=3.5pt 1.5]{-}(0,0)(0,-6)
}

}}


\rput(107,70){



\rput(4,-13)
{

\rput(0,7){
\psline(0,0)(-4.5,5)(4.5,5)(0,0)
\psline(-3,5)(-3,8)
\psline(3,5)(3,8)
\psline(0,0)(0,-2)
\rput(0,3){\scr $a_s$}
}

\rput(-3,-1){
\psline(0,0)(-4.5,5)(4.5,5)(0,0)
\psline(-3,5)(-3,16)
\psline(3,5)(3,7)
\psline(0,0)(0,-2)
\rput(0,3){\scr $a_t$}
}

\rput(-6,-9){
\psline(0,0)(-4.5,5)(4.5,5)(0,0)
\psline(-3,5)(-3,24)
\psline(3,5)(3,7)

\psline(0,0)(0,-2)
\rput(0,3){\scr $a_s$}
}

\rput(-9,-17){
\psline(0,0)(-4.5,5)(4.5,5)(0,0)
\psline(-3,5)(-3,32)
\psline(3,5)(3,7)

\psline(0,0)(0,-8)
\rput(0,3){\scr $a_t$}
}

\rput(-9,-21){
\psframe*[linecolor=white](-1.5,-2)(1.5,2)
\psframe(-1.5,-2)(1.5,2)
\rput(0,0){\scr $f$}
}


\rput(-7,-29){
\pcline[doubleline=true,arrowinset=0.7,arrowlength=0.8, arrowsize=3.5pt 1.5]{-}(0,0)(0,-6)
}

}}


\rput(-1,17){



\rput(0,-3){
\psline(-5,5)(-2,2)
\psline[border=2pt](-2,5)(-5,2)
}

\rput(4,-13)
{

\psline(0,0)(-4.5,5)(4.5,5)(0,0)
\psline(-3,5)(-3,15)
\psline(3,5)(3,15)

\rput(3,10){
\psframe*[linecolor=white](-1.5,-2)(1.5,2)
\psframe(-1.5,-2)(1.5,2)
\rput(0,0){\scr $f$}
}

\psline(0,0)(0,-2)
\rput(0,3){\scr $b_s$}

\rput(-3,-8){
\psline(0,0)(-4.5,5)(4.5,5)(0,0)
\psline(-3,5)(-3,20)
\psline(3,5)(3,7)

\psline(0,0)(0,-2)
\rput(0,3){\scr $b_s$}
}

\rput(-6,-16){
\psline(0,0)(-4.5,5)(4.5,5)(0,0)
\psline(-3,5)(-3,28)
\psline(3,5)(3,7)

\psline(0,0)(0,-2)
\rput(0,3){\scr $b_t$}
}

\rput(-9,-24){
\psline(0,0)(-4.5,5)(4.5,5)(0,0)
\psline(-3,5)(-3,39)
\psline(3,5)(3,7)

\psline(0,0)(0,-2)
\rput(0,3){\scr $b_t$}
}

\rput(8,-5){\msTmap{\sigma_f}}

\rput(-6,-28){
\pcline[doubleline=true,arrowinset=0.7,arrowlength=0.8, arrowsize=3.5pt 1.5]{-}(0,0)(0,-6)
}

}}


\rput(26,17){


\rput(0,-3){
\psline(-5,5)(-2,2)
\psline[border=2pt](-2,5)(-5,2)
}


\rput(4,-13)
{

\rput(0,7){
\psline(0,0)(-4.5,5)(4.5,5)(0,0)
\psline(-3,5)(-3,8)
\psline(3,5)(3,8)
\psline(0,0)(0,-8)
\rput(0,3){\scr $a_s$}
}

\rput(0,2){
\psframe*[linecolor=white](-1.5,-2)(1.5,2)
\psframe(-1.5,-2)(1.5,2)
\rput(0,0){\scr $f$}
}

\rput(-3,-8){
\psline(0,0)(-4.5,5)(4.5,5)(0,0)
\psline(-3,5)(-3,20)
\psline(3,5)(3,7)
\psline(0,0)(0,-2)
\rput(0,3){\scr $b_s$}
}

\rput(-6,-16){
\psline(0,0)(-4.5,5)(4.5,5)(0,0)
\psline(-3,5)(-3,28)
\psline(3,5)(3,7)

\psline(0,0)(0,-2)
\rput(0,3){\scr $b_t$}
}

\rput(-9,-24){
\psline(0,0)(-4.5,5)(4.5,5)(0,0)
\psline(-3,5)(-3,39)
\psline(3,5)(3,7)

\psline(0,0)(0,-2)
\rput(0,3){\scr $b_t$}
}

\rput(8,-5){\msTmap{\sigma_f}}

}}


\rput(53,17){


\rput(0,-3){
\psline(-5,5)(-2,2)
\psline[border=2pt](-2,5)(-5,2)
}


\rput(4,-13)
{

\rput(0,7){
\psline(0,0)(-4.5,5)(4.5,5)(0,0)
\psline(-3,5)(-3,8)
\psline(3,5)(3,8)
\psline(0,0)(0,-2)
\rput(0,3){\scr $a_s$}
}

\rput(-3,-1){
\psline(0,0)(-4.5,5)(4.5,5)(0,0)
\psline(-3,5)(-3,13)
\psline(3,5)(3,7)
\psline(0,0)(0,-8)
\rput(0,3){\scr $a_s$}
}

\rput(-3,-6){
\psframe*[linecolor=white](-1.5,-2)(1.5,2)
\psframe(-1.5,-2)(1.5,2)
\rput(0,0){\scr $f$}
}

\rput(-6,-16){
\psline(0,0)(-4.5,5)(4.5,5)(0,0)
\psline(-3,5)(-3,28)
\psline(3,5)(3,7)

\psline(0,0)(0,-2)
\rput(0,3){\scr $b_t$}
}

\rput(-9,-24){
\psline(0,0)(-4.5,5)(4.5,5)(0,0)
\psline(-3,5)(-3,39)
\psline(3,5)(3,7)

\psline(0,0)(0,-2)
\rput(0,3){\scr $b_t$}
}

\rput(7,-5){\msTmap{\tau_f}}

\rput(-5,-28){
\pcline[doubleline=true,arrowinset=0.7,arrowlength=0.8, arrowsize=3.5pt 1.5]{-}(0,0)(0,-6)
}

}}


\rput(79,17){


\rput(0,-3){
\psline(-5,5)(-2,2)
\psline[border=2pt](-2,5)(-5,2)
}


\rput(4,-13)
{

\rput(0,7){
\psline(0,0)(-4.5,5)(4.5,5)(0,0)
\psline(-3,5)(-3,8)
\psline(3,5)(3,8)
\psline(0,0)(0,-2)
\rput(0,3){\scr $a_s$}
}

\rput(-3,-1){
\psline(0,0)(-4.5,5)(4.5,5)(0,0)
\psline(-3,5)(-3,13)
\psline(3,5)(3,7)
\psline(0,0)(0,-2)
\rput(0,3){\scr $a_s$}
}

\rput(-6,-9){
\psline(0,0)(-4.5,5)(4.5,5)(0,0)
\psline(-3,5)(-3,21)
\psline(3,5)(3,7)

\psline(0,0)(0,-8)
\rput(0,3){\scr $a_t$}
}

\rput(-6,-14){
\psframe*[linecolor=white](-1.5,-2)(1.5,2)
\psframe(-1.5,-2)(1.5,2)
\rput(0,0){\scr $f$}
}

\rput(-9,-24){
\psline(0,0)(-4.5,5)(4.5,5)(0,0)
\psline(-3,5)(-3,39)
\psline(3,5)(3,7)

\psline(0,0)(0,-2)
\rput(0,3){\scr $b_t$}
}

\rput(7,-5){\msTmap{\tau_f}}

}}


\rput(107,17){


\rput(0,-3){
\psline(-5,5)(-2,2)
\psline[border=2pt](-2,5)(-5,2)
}


\rput(4,-13)
{

\rput(0,7){
\psline(0,0)(-4.5,5)(4.5,5)(0,0)
\psline(-3,5)(-3,8)
\psline(3,5)(3,8)
\psline(0,0)(0,-2)
\rput(0,3){\scr $a_s$}
}

\rput(-3,-1){
\psline(0,0)(-4.5,5)(4.5,5)(0,0)
\psline(-3,5)(-3,13)
\psline(3,5)(3,7)
\psline(0,0)(0,-2)
\rput(0,3){\scr $a_s$}
}

\rput(-6,-9){
\psline(0,0)(-4.5,5)(4.5,5)(0,0)
\psline(-3,5)(-3,21)
\psline(3,5)(3,7)

\psline(0,0)(0,-2)
\rput(0,3){\scr $a_t$}
}

\rput(-9,-17){
\psline(0,0)(-4.5,5)(4.5,5)(0,0)
\psline(-3,5)(-3,32)
\psline(3,5)(3,7)

\psline(0,0)(0,-8)
\rput(0,3){\scr $a_t$}
}

\rput(-9,-21){
\psframe*[linecolor=white](-1.5,-2)(1.5,2)
\psframe(-1.5,-2)(1.5,2)
\rput(0,0){\scr $f$}
}


\rput(-7,-27){
\pcline[doubleline=true,arrowinset=0.7,arrowlength=0.8, arrowsize=3.5pt 1.5]{-}(0,0)(0,-6)
}

}}


\rput(-3,-35){



\rput(0,-3){
\psline(-5,5.5)(-2,2)
\psline[border=2pt](-2,5.5)(-5,2)

\rput(3,0){
\psline(-5,2)(-3.5,0)(-2,2)
\psline(-2,2)(-2,5.5)
}

\rput(-3,0){
\psline(-5,2)(-3.5,0)(-2,2)
\psline(-5,2)(-5,5.5)
}

}

\rput(4,-13)
{

\rput(-0.75,0){
\psline(0,0)(-5.5,5)(5.5,5)(0,0)
\rput(0,3){\scr $b_s$}
\psline(0,0)(0,-3)
}
\psline(-4.5,5)(-4.5,10)
\psline(3,5)(3,15)

\rput(3,10){
\psframe*[linecolor=white](-1.5,-2)(1.5,2)
\psframe(-1.5,-2)(1.5,2)
\rput(0,0){\scr $f$}
}


\rput(-3,-8){

\rput(-2.5,0){
\psline(0,0)(-5.5,5)(5.5,5)(0,0)
\psline(0,0)(0,-2)
\rput(0,3){\scr $b_t$}
}

\psline(-7.5,5)(-7.5,18)

}

%
%
%
%

\rput(7,0){
\pcline[doubleline=true,arrowinset=0.7,arrowlength=0.8, arrowsize=3.5pt 1.5]{->}(0,0)(28,0)\naput{\scr $\sigma_f$}
}

}}


\rput(49,-37){



\rput(0,-1){
\psline(-5,5.5)(-2,2)
\psline[border=2pt](-2,5.5)(-5,2)

\rput(3,0){
\psline(-5,2)(-3.5,0)(-2,2)
\psline(-2,2)(-2,5.5)
}

\rput(-3,0){
\psline(-5,2)(-3.5,0)(-2,2)
\psline(-5,2)(-5,5)
}

}

\rput(4,-13)
{

\rput(0,5){

\rput(-0.75,0){
\psline(0,0)(-5.5,5)(5.5,5)(0,0)
\rput(0,3){\scr $a_s$}
\psline(0,0)(0,-8)
}
\psline(-4.5,5)(-4.5,7)
\psline(3,5)(3,12)
}


\rput(-3,-8){

\rput(-2.5,0){
\psline(0,0)(-5.5,5)(5.5,5)(0,0)
\psline(0,0)(0,-2)
\rput(0,3){\scr $b_t$}
}

\psline(-7.5,5)(-7.5,20)

}

\rput(-0.75,1){
\psframe*[linecolor=white](-1.5,-2)(1.5,2)
\psframe(-1.5,-2)(1.5,2)
\rput(0,0){\scr $f$}
}

%
%
%
%


\rput(7,2){
\pcline[doubleline=true,arrowinset=0.7,arrowlength=0.8, arrowsize=3.5pt 1.5]{->}(0,0)(28,0)\naput{\scr $\tau_f$}
}

}}


\rput(104,-36){



\rput(0,-1){
\psline(-5,5.5)(-2,2)
\psline[border=2pt](-2,5.5)(-5,2)

\rput(3,0){
\psline(-5,2)(-3.5,0)(-2,2)
\psline(-2,2)(-2,5.5)
}

\rput(-3,0){
\psline(-5,2)(-3.5,0)(-2,2)
\psline(-5,2)(-5,5)
}

}

\rput(4,-13)
{

\rput(0,5){

\rput(-0.75,0){
\psline(0,0)(-5.5,5)(5.5,5)(0,0)
\rput(0,3){\scr $a_s$}
\psline(0,0)(0,-3)
}
\psline(-4.5,5)(-4.5,7)
\psline(3,5)(3,12)
}


\rput(-3,-3){

\rput(-2.5,0){
\psline(0,0)(-5.5,5)(5.5,5)(0,0)
\psline(0,0)(0,-8)
\rput(0,3){\scr $a_t$}
}

\psline(-7.5,5)(-7.5,15)

}

\rput(-5.5,-7){
\psframe*[linecolor=white](-1.5,-2)(1.5,2)
\psframe(-1.5,-2)(1.5,2)
\rput(0,0){\scr $f$}
}

%
%
%
%



}}

\end{small}
\endpspicture\]

\[\psset{unit=0.1cm,labelsep=2pt,nodesep=3pt,linewidth=0.8pt}
\pspicture(0,10)(70,65)
\begin{small}


\rput(0,60){

\rput(-3.5,-14){\scr $T$}
\rput(0.5,-6){\scr $S$}
\rput(6,5){$A$}


\rput(4,-13)
{

\psline(0,0)(-4,5)(4,5)(0,0)
\psline(-2,5)(-2,11) \pscircle(-2,11.5){0.5}
\psline(2,5)(2,15)
\psline(0,0)(0,-2)
\rput(0,3){\scr $b_s$}

\rput(2,10){
\psframe*[linecolor=white](-1.5,-2)(1.5,2)
\psframe(-1.5,-2)(1.5,2)
\rput(0,0){\scr $f$}
}


\rput(-3,-8){
\psline(0,0)(-4.5,5)(4.5,5)(0,0)
\psline(-3,5)(-3,10) \pscircle(-3,10.5){0.5}
\psline(3,5)(3,7)

\psline(0,0)(0,-2)
\rput(0,3){\scr $b_t$}
}

\rput(15,5){\Tmap{\sigma_f}}

}

\rput(6,-21){
\pcline[doubleline=true,arrowinset=0.7,arrowlength=0.8, arrowsize=3.5pt 1.5]{-}(0,0)(6,-6)
}

}


\rput(33,60){

\rput(0,-13){

\rput(4,8){
\rput(-4,0){
}

\psline(0,0)(-4,5)(4,5)(0,0)
\psline(-2,5)(-2,7) \pscircle(-2,7.5){0.5}
\psline(2,5)(2,9) 
\psline(0,0)(0,-10)
\rput(0,3){\scr $a_s$}
}


\rput(4,3){
\psframe*[linecolor=white](-1.5,-2)(1.5,2)
\psframe(-1.5,-2)(1.5,2)
\rput(0,0){\scr $f$}
}


\rput(1,-8){
\psline(0,0)(-4.5,5)(4.5,5)(0,0)
\psline(-3,5)(-3,10) \pscircle(-3,10.5){0.5}
\psline(3,5)(3,7)

\psline(0,0)(0,-2)
\rput(0,3){\scr $b_t$}
}

\rput(20,5){\Tmap{\tau_f}}

}

\rput(-3,-21){
\pcline[doubleline=true,arrowinset=0.7,arrowlength=0.8, arrowsize=3.5pt 1.5]{-}(0,0)(-6,-6)
}

\rput(5,-21){
\pcline[doubleline=true,arrowinset=0.7,arrowlength=0.8, arrowsize=3.5pt 1.5]{-}(0,0)(6,-6)
}

}


\rput(68,60){

\rput(0,7){

\rput(0,-7){
}


\rput(4,-13)
{

\psline(0,0)(-4,5)(4,5)(0,0)
\psline(-2,5)(-2,7) \pscircle(-2,7.5){0.5}
\psline(2,5)(2,9)

\psline(0,0)(0,-2)
\rput(0,3){\scr $a_s$}

\rput(-3,-8){
\psline(0,0)(-4.5,5)(4.5,5)(0,0)
\psline(-3,5)(-3,15) \rput(-3,15){\pscircle(0,0.5){0.5}}
\psline(3,5)(3,7)

\psline(0,0)(0,-10)
\rput(0,3){\scr $a_t$}

\rput(0,-5){
\psframe*[linecolor=white](-1.5,-2)(1.5,2)
\psframe(-1.5,-2)(1.5,2)
\rput(0,0){\scr $f$}
}

}

}
}

\rput(-3,-21){
\pcline[doubleline=true,arrowinset=0.7,arrowlength=0.8, arrowsize=3.5pt 1.5]{-}(0,0)(-6,-6)
}

}


\rput(14,35){



\rput(4,-13)
{

\psline(0,0)(-4,5)(4,5)(0,0)
\psline(-2,5)(-2,11) \pscircle(-2,11.5){0.5}
\psline(2,5)(2,15)
\psline(0,0)(0,-2)
\rput(0,3){\scr $b_t$}

\rput(2,10){
\psframe*[linecolor=white](-1.5,-2)(1.5,2)
\psframe(-1.5,-2)(1.5,2)
\rput(0,0){\scr $f$}
}


%
%


}

\rput(9,-13){
\pcline[doubleline=true,arrowinset=0.7,arrowlength=0.8, arrowsize=3.5pt 1.5]{-}(0,0)(6,-6)
}

}


\rput(47,36){

\rput(0,-14){

\rput(4,8){
\rput(-4,0){
}

\psline(0,0)(-4,5)(4,5)(0,0)
\psline(-2,5)(-2,7) \pscircle(-2,7.5){0.5}
\psline(2,5)(2,9) 
\psline(0,0)(0,-10)
\rput(0,3){\scr $a_s$}
}


\rput(4,3){
\psframe*[linecolor=white](-1.5,-2)(1.5,2)
\psframe(-1.5,-2)(1.5,2)
\rput(0,0){\scr $f$}
}


%


}

\rput(-3,-14){
\pcline[doubleline=true,arrowinset=0.7,arrowlength=0.8, arrowsize=3.5pt 1.5]{-}(0,0)(-6,-6)
}


}


\rput(27,18){



\rput(4,-13)
{

\psline(2,5)(2,15)

\rput(2,10){
\psframe*[linecolor=white](-1.5,-2)(1.5,2)
\psframe(-1.5,-2)(1.5,2)
\rput(0,0){\scr $f$}
}


%
%


}


}

\end{small}
\endpspicture\]

Finally we check that these assignations are inverse to each other.  We start with $\zeta_f$, construct individual weak maps $\sigma_f$ and $\tau_f$, compile them back into a weak $TS$-map and check that this is equal to $\zeta_f$. This is seen from the commutativity of the following diagram, where the top region is the multiplication axiom for a weak map of $TS$-algebras.

\[\psset{unit=0.1cm,labelsep=2pt,nodesep=3pt,linewidth=0.8pt}
\pspicture(0,-46)(80,41)
\begin{small}


\rput(0,35){

\rput(0,5){

\rput(0,-3){
\rput(-8,2){$T$}
\rput(4,2){$S$}
\rput(8,2){$A$}
}

\rput(-5.5,-19.5){\scr $S$}
\rput(-1.5,-11.5){\scr $T$}

}

\rput(4,-13)
{

\rput(0,0){
\psline(0,0)(-6,5)(6,5)(0,0)
\psline(-4,5)(-4,9)
\pscircle(-4,9.5){0.5}
\psline(0,5)(0,15)
\psline(4,5)(4,15)
\psline(0,0)(0,-2)
\rput(0,3){\scr $b_{ts}$}


}

\rput(4,10){
\psframe*[linecolor=white](-1.5,-2)(1.5,2)
\psframe(-1.5,-2)(1.5,2)
\rput(0,0){\scr $f$}
}

\rput(-6,-8){
\psline(0,0)(-8,5)(8,5)(0,0)
\psline(-6,5)(-6,23)
\psline(-2,5)(-2,9)
\pscircle(-2,9.5){0.5}
\psline(6,5)(6,7)
\psline(0,0)(0,-2)
\rput(0,3){\scr $b_{ts}$}
}

\rput(16,5){\Tmap{\zeta_f}}

}

}


\rput(38,35){


\rput(4,-13)
{

\rput(0,7){
\psline(0,0)(-6,5)(6,5)(0,0)
\psline(-4,5)(-4,7)
\pscircle(-4,7.5){0.5}
\psline(0,5)(0,10)
\psline(4,5)(4,10)
\psline(0,0)(0,-8)
\rput(0,3){\scr $a_{ts}$}


}

\rput(0,2){
\psframe*[linecolor=white](-1.5,-2)(1.5,2)
\psframe(-1.5,-2)(1.5,2)
\rput(0,0){\scr $f$}
}

\rput(-6,-8){
\psline(0,0)(-8,5)(8,5)(0,0)
\psline(-6,5)(-6,25)
\psline(-2,5)(-2,9)
\pscircle(-2,9.5){0.5}
\psline(6,5)(6,7)

\psline(0,0)(0,-2)
\rput(0,3){\scr $b_{ts}$}
}

\rput(16,5){\Tmap{\zeta_f}}

}

}


\rput(78,35){


\rput(4,-13)
{

\rput(0,7){
\psline(0,0)(-6,5)(6,5)(0,0)
\psline(-4,5)(-4,7)
\pscircle(-4,7.5){0.5}
\psline(0,5)(0,10)
\psline(4,5)(4,10)
\psline(0,0)(0,-2)
\rput(0,3){\scr $a_{ts}$}


}

\rput(-6,0){
\psline(0,0)(-8,5)(8,5)(0,0)
\psline(-6,5)(-6,17)
\psline(-2,5)(-2,8)
\pscircle(-2,8.5){0.5}
\psline(6,5)(6,7)
\psline(0,0)(0,-10)
\rput(0,3){\scr $a_{ts}$}
}

\rput(-6,-5){
\psframe*[linecolor=white](-1.5,-2)(1.5,2)
\psframe(-1.5,-2)(1.5,2)
\rput(0,0){\scr $f$}
}

}

}

\rput(0,10){
\pcline[doubleline=true,arrowinset=0.7,arrowlength=0.8, arrowsize=3.5pt 1.5]{-}(0,0)(0,-5)
}

\rput(78,11){
\pcline[doubleline=true,arrowinset=0.7,arrowlength=0.8, arrowsize=3.5pt 1.5]{-}(0,0)(0,-5)
}

\rput(18,-11){
\pcline[doubleline=true,arrowinset=0.7,arrowlength=0.8, arrowsize=3.5pt 1.5]{->}(0,0)(42,0)\naput{\scr $\zeta_f$}
}


\rput(0,-8){

\rput(0,0){

\rput(0,5){
}

\rput(-2,3){
\psline(-6,8)(-6,-2)
\psline(-2,2)(2,-2)
\psline[border=2pt](2,2)(-2,-2)
\psline(6,8)(6,-2)

\psline(2,2)(2,4)\pscircle(2,4.5){0.5}
\psline(-2,2)(-2,4)\pscircle(-2,4.5){0.5}


}

\rput(-6,-1){
\psline(-2,2)(0,0)(2,2)
\psline(0,0)(0,-2)
}

\rput(2,-1){
\psline(-2,2)(0,0)(2,2)
\psline(0,0)(0,-2)
}

\rput(-1,-8){

\rput(1,0){
\psline(0,0)(-10,5)(10,5)(0,0)
\psline(-6,5)(-6,7)
\psline(2,5)(2,7)
\psline(8,5)(8,19)
\psline(0,0)(0,-4)
\rput(0,3){\scr $b_{ts}$}
}

\rput(9,10){
\psframe*[linecolor=white](-1.5,-2)(1.5,2)
\psframe(-1.5,-2)(1.5,2)
\rput(0,0){\scr $f$}
}

}

}

}


\rput(78,-8){

\rput(0,0){

\rput(0,5){
}


\rput(-2,5){
\psline(-6,8)(-6,-2)
\psline(-2,2)(2,-2)
\psline[border=2pt](2,2)(-2,-2)
\psline(6,8)(6,-2)

\psline(2,2)(2,4)\pscircle(2,4.5){0.5}
\psline(-2,2)(-2,4)\pscircle(-2,4.5){0.5}


}

\rput(-6,1){
\psline(-2,2)(0,0)(2,2)
\psline(0,0)(0,-2)
}

\rput(2,1){
\psline(-2,2)(0,0)(2,2)
\psline(0,0)(0,-2)
}

\rput(-1,-8){

\rput(1,2){
\psline(0,0)(-10,5)(10,5)(0,0)
\psline(-6,5)(-6,7)
\psline(2,5)(2,7)
\psline(8,5)(8,19)
\psline(0,0)(0,-8)
\rput(0,3){\scr $a_{ts}$}
}

\rput(1,-2){
\psframe*[linecolor=white](-1.5,-2)(1.5,2)
\psframe(-1.5,-2)(1.5,2)
\rput(0,0){\scr $f$}
}

}

}

}


\rput(0,-37){

\rput(0,1){
}

\rput(0,-5){
\psline(0,0)(-9,5)(9,5)(0,0)
\psline(-6,5)(-6,14)
\psline(0,5)(0,14)
\psline(6,5)(6,14)
\psline(0,0)(0,-2)


\rput(6,9){
\psframe*[linecolor=white](-1.5,-2)(1.5,2)
\psframe(-1.5,-2)(1.5,2)
\rput(0,0){\scr $f$}
}

\rput(0,3){\scr $b_{ts}$}
}

\rput(0,16){
\pcline[doubleline=true,arrowinset=0.7,arrowlength=0.8, arrowsize=3.5pt 1.5]{-}(0,0)(0,-5)
}

\rput(78,14){
\pcline[doubleline=true,arrowinset=0.7,arrowlength=0.8, arrowsize=3.5pt 1.5]{-}(0,0)(0,-5)
}

\rput(18,1){
\pcline[doubleline=true,arrowinset=0.7,arrowlength=0.8, arrowsize=3.5pt 1.5]{->}(0,0)(42,0)\naput{\scr $\zeta_f$}
}

}


\rput(78,-39){

\rput(0,1){
}

\rput(0,1){
\psline(0,0)(-9,5)(9,5)(0,0)
\psline(-6,5)(-6,9)
\psline(0,5)(0,9)
\psline(6,5)(6,9)
\psline(0,0)(0,-8)


\rput(0,-4){
\psframe*[linecolor=white](-1.5,-2)(1.5,2)
\psframe(-1.5,-2)(1.5,2)
\rput(0,0){\scr $f$}
}

\rput(0,3){\scr $a_{ts}$}
}

}

\end{small}
\endpspicture\]

Conversely start with $\sigma_f$ and $\tau_f$, make $\zeta_f$, and then extract individual weak $S$-map and $T$-map constraints out, and check that these are equal to the $\sigma_f$ and $\tau_f$ we started with. This is seen from the commutativity of the following diagrams; in each case the triangle commutes by the triangle axiom for a weak map.

\[\psset{unit=0.1cm,labelsep=2pt,nodesep=3pt,linewidth=0.8pt}
\pspicture(0,-37)(114,23)
\begin{small}



\rput(0,20){

\rput(0,0){

\rput(2,5){$S$}
\rput(6,5){$A$}


\rput(4,-13)
{

\psline(0,0)(-4,5)(4,5)(0,0)
\psline(-2,5)(-2,15)
\psline(2,5)(2,15)

\rput(2,10){
\psframe*[linecolor=white](-1.5,-2)(1.5,2)
\psframe(-1.5,-2)(1.5,2)
\rput(0,0){\scr $f$}
}

\psline(0,0)(0,-2)
\rput(0,3){\scr $b_s$}

\rput(-3,-8){
\psline(0,0)(-4.5,5)(4.5,5)(0,0)
\psline(-3,5)(-3,20)
\rput(-3,20){\pscircle(0,0.5){0.5}}

\psline(3,5)(3,7)

\psline(0,0)(0,-2)
\rput(0,3){\scr $b_t$}
}

\rput(10,5){\msTmap{\sigma_f}}

}


\rput(18,20){

\rput(4,-33){

\rput(4,8){
\rput(-4,0){
\rput(2,10){$S$}
\rput(6,10){$A$}
}

\rput(0,-2){

\psline(0,0)(-4,5)(4,5)(0,0)
\psline(-2,5)(-2,9)
\psline(2,5)(2,9)
\psline(0,0)(0,-8)
\rput(0,3){\scr $a_s$}
}

}


\rput(4,1.5){
\psframe*[linecolor=white](-1.5,-2)(1.5,2)
\psframe(-1.5,-2)(1.5,2)
\rput(0,0){\scr $f$}
}


\rput(1,-8){
\psline(0,0)(-4.5,5)(4.5,5)(0,0)
\psline(-3,5)(-3,20)
\rput(-3,20){\pscircle(0,0.5){0.5}}
\psline(3,5)(3,7)

\psline(0,0)(0,-2)
\rput(0,3){\scr $b_t$}
}

}

}
}

\rput(34,-8){\msTmap{\tau_f}}

}



\rput(44,20){

\rput(0,7){

\rput(0,-7){
\rput(2,5){$S$}
\rput(6,5){$A$}
}


\rput(4,-13)
{

\psline(0,0)(-4,5)(4,5)(0,0)
\psline(-2,5)(-2,8)
\psline(2,5)(2,8)

\psline(0,0)(0,-2)
\rput(0,3){\scr $a_s$}

\rput(-3,-8){
\psline(0,0)(-4.5,5)(4.5,5)(0,0)
\psline(-3,5)(-3,13)
\rput(-3,13){\pscircle(0,0.5){0.5}}
\psline(3,5)(3,7)

\psline(0,0)(0,-10)
\rput(0,3){\scr $a_t$}

\rput(0,-5){
\psframe*[linecolor=white](-1.5,-2)(1.5,2)
\psframe(-1.5,-2)(1.5,2)
\rput(0,0){\scr $f$}
}

}

}
}
}


\rput(0,-36){

\rput(11,5){
\pcline[doubleline=true,arrowinset=0.7,arrowlength=0.8, arrowsize=3.5pt 1.5]{->}(0,0)(26,0)\nbput{\scr $\sigma_f$}
}

\rput(1,30){
\pcline[doubleline=true,arrowinset=0.7,arrowlength=0.8, arrowsize=3.5pt 1.5]{-}(0,0)(0,-8) 
}

\rput(45,30){
\pcline[doubleline=true,arrowinset=0.7,arrowlength=0.8, arrowsize=3.5pt 1.5]{-}(0,0)(0,-8) 
}

\rput(26,33){
\pcline[doubleline=true,arrowinset=0.7,arrowlength=0.8, arrowsize=3.5pt 1.5]{-}(0,0)(13,-13)
}


\rput(1,0){

\rput(0,4){
\rput(-2,14){$S$}
\rput(2,14){$A$}
}

\psline(0,0)(-4,5)(4,5)(0,0)
\psline(-2,5)(-2,15)
\psline(2,5)(2,15)

\rput(2,10){
\psframe*[linecolor=white](-1.5,-2)(1.5,2)
\psframe(-1.5,-2)(1.5,2)
\rput(0,0){\scr $f$}
}

\psline(0,0)(0,-2)
\rput(0,3){\scr $b_s$}
\rput(0,-4.5){$B$}

}



\rput(44,0){

\rput(0,8){
\rput(-2,10){$S$}
\rput(2,10){$A$}

\psline(0,0)(-4,5)(4,5)(0,0)
\psline(-2,5)(-2,7)
\psline(2,5)(2,7)
\psline(0,0)(0,-10)
\rput(0,3){\scr $a_s$}
}


\rput(0,3){
\psframe*[linecolor=white](-1.5,-2)(1.5,2)
\psframe(-1.5,-2)(1.5,2)
\rput(0,0){\scr $f$}
}

\rput(0,-4.5){$B$}

}


}



\rput(67,0){



\rput(0,20){

\rput(0,0){

\rput(-2,5){$T$}
\rput(6,5){$A$}


\rput(4,-13)
{

\psline(0,0)(-4,5)(4,5)(0,0)
\psline(-2,5)(-2,12)
\rput(-2,12){\pscircle(0,0.5){0.5}}

\psline(2,5)(2,15)

\rput(2,10){
\psframe*[linecolor=white](-1.5,-2)(1.5,2)
\psframe(-1.5,-2)(1.5,2)
\rput(0,0){\scr $f$}
}

\psline(0,0)(0,-2)
\rput(0,3){\scr $b_s$}

\rput(-3,-8){
\psline(0,0)(-4.5,5)(4.5,5)(0,0)
\psline(-3,5)(-3,23)

\psline(3,5)(3,7)

\psline(0,0)(0,-2)
\rput(0,3){\scr $b_t$}
}

\rput(10,5){\msTmap{\sigma_f}}

}


\rput(18,20){

\rput(4,-33){

\rput(4,8){
\rput(-4,0){
\rput(-2,10){$T$}
\rput(6,10){$A$}
}

\rput(0,-2){
\psline(0,0)(-4,5)(4,5)(0,0)
\psline(-2,5)(-2,7)
\rput(-2,7){\pscircle(0,0.5){0.5}}

\psline(2,5)(2,9)
\psline(0,0)(0,-8)
\rput(0,3){\scr $a_s$}
}

}


\rput(4,1.5){
\psframe*[linecolor=white](-1.5,-2)(1.5,2)
\psframe(-1.5,-2)(1.5,2)
\rput(0,0){\scr $f$}
}


\rput(1,-8){
\psline(0,0)(-4.5,5)(4.5,5)(0,0)
\psline(-3,5)(-3,23)
\psline(3,5)(3,7)

\psline(0,0)(0,-2)
\rput(0,3){\scr $b_t$}
}

}

}
}

\rput(34,-8){\msTmap{\tau_f}}

}



\rput(43,20){

\rput(0,7){

\rput(0,-7){
\rput(-2,5){$T$}
\rput(6,5){$A$}
}


\rput(4,-13)
{

\rput(0,-1){
\psline(0,0)(-4,5)(4,5)(0,0)
\psline(-2,5)(-2,7)
\rput(-2,7){\pscircle(0,0.5){0.5}}

\psline(2,5)(2,9)

\psline(0,0)(0,-2)
\rput(0,3){\scr $a_s$}
}

\rput(-3,-8){
\psline(0,0)(-4.5,5)(4.5,5)(0,0)
\psline(-3,5)(-3,16)
\psline(3,5)(3,7)

\psline(0,0)(0,-10)
\rput(0,3){\scr $a_t$}

\rput(0,-5){
\psframe*[linecolor=white](-1.5,-2)(1.5,2)
\psframe(-1.5,-2)(1.5,2)
\rput(0,0){\scr $f$}
}

}

}
}
}


\rput(0,-36){

\rput(11,5){
\pcline[doubleline=true,arrowinset=0.7,arrowlength=0.8, arrowsize=3.5pt 1.5]{->}(0,0)(26,0)\nbput{\scr $\tau_f$}
}

\rput(1,30){
\pcline[doubleline=true,arrowinset=0.7,arrowlength=0.8, arrowsize=3.5pt 1.5]{-}(0,0)(0,-8) 
}

\rput(44,30){
\pcline[doubleline=true,arrowinset=0.7,arrowlength=0.8, arrowsize=3.5pt 1.5]{-}(0,0)(0,-8) 
}

\rput(20,33){
\pcline[doubleline=true,arrowinset=0.7,arrowlength=0.8, arrowsize=3.5pt 1.5]{-}(0,0)(-13,-13)
}


\rput(1,0){

\rput(0,4){
\rput(-2,14){$T$}
\rput(2,14){$A$}
}

\psline(0,0)(-4,5)(4,5)(0,0)
\psline(-2,5)(-2,15)
\psline(2,5)(2,15)

\rput(2,10){
\psframe*[linecolor=white](-1.5,-2)(1.5,2)
\psframe(-1.5,-2)(1.5,2)
\rput(0,0){\scr $f$}
}

\psline(0,0)(0,-2)
\rput(0,3){\scr $b_t$}
\rput(0,-4.5){$B$}

}



\rput(44,0){

\rput(0,8){
\rput(-2,10){$T$}
\rput(2,10){$A$}

\psline(0,0)(-4,5)(4,5)(0,0)
\psline(-2,5)(-2,7)
\psline(2,5)(2,7)
\psline(0,0)(0,-10)
\rput(0,3){\scr $a_t$}
}


\rput(0,3){
\psframe*[linecolor=white](-1.5,-2)(1.5,2)
\psframe(-1.5,-2)(1.5,2)
\rput(0,0){\scr $f$}
}

\rput(0,-4.5){$B$}

}


}



}

\end{small}
\endpspicture\]

\end{proof}



\begin{thebibliography}{Che11b}

\bibitem[AM10]{am1}
M.~Aguiar and S.~Mahajan.
\newblock Monoidal functors, species and hopf algebras.
\newblock {\em CRM Monoidal Series}, 29, 2010.

\bibitem[BD95]{bd3}
John Baez and James Dolan.
\newblock Higher-dimensional algebra and topological quantum field theory.
\newblock {\em Jour. Math. Phys.}, 36:6073--6105, 1995.

\bibitem[Bec69]{bec1}
J.~Beck.
\newblock Distributive laws.
\newblock {\em Lecture Notes in Mathematics}, 80:119--140, 1969.

\bibitem[CC22]{cc1}
Eugenia Cheng and Alexander~S. Corner.
\newblock Weak vertical composition, 2022, 2212.11865.

\bibitem[CG07]{cg1}
Eugenia Cheng and Nick Gurski.
\newblock The periodic table of $n$-categories for low dimensions {I}:
  degenerate categories and degenerate bicategories.
\newblock In Batanin, Davydov, Johnson, Lack, and Neeman, editors, {\em
  Categories in Algebra, Geometry and Mathematical Physics, proceedings of
  Streetfest}, volume 431 of {\em Contemporary Math.}, pages 143--164. AMS,
  2007.
\newblock E-print {\tt 0708.1178}.

\bibitem[CG11]{cg3}
Eugenia Cheng and Nick Gurski.
\newblock The periodic table of $n$-categories for low dimensions {I}{I}:
  degenerate tricategories.
\newblock {\em Cahiers de Topologie et G{\'e}om{\'e}trie Diff{\'e}rentielle
  Cat{\'e}goriques}, 52:82--125, 2011.
\newblock E-print 0706.2307.

\bibitem[CG14]{cg4}
Eugenia Cheng and Nick Gurski.
\newblock Iterated icons.
\newblock {\em Theory and Applications of Categories}, 29:929--977, 2014,
  1308.6495.

\bibitem[Che11a]{che16}
Eugenia Cheng.
\newblock Comparing operadic theories of $n$-category.
\newblock {\em Homology, Homotopy and Applications}, 13(2):217--249, 2011.
\newblock Also E-print {\tt 0809.2070}.

\bibitem[Che11b]{che17}
Eugenia Cheng.
\newblock Iterated distributive laws.
\newblock {\em Mathematical Proceedings of the Cambridge Philosophical
  Society}, 150(3):459--487, 2011.
\newblock Also E-print {\tt 0710.1120}.

\bibitem[CHP04]{chp1}
Eugenia Cheng, Martin Hyland, and John Power.
\newblock Pseudo-distributive laws.
\newblock {\em Electronic Notes in Theoretical Computer Science}, 83, 2004.

\bibitem[CL19]{et1}
Eugenia Cheng and Tom Leinster.
\newblock Weak infinity-categories via terminal coalgebras.
\newblock {\em Theory and Applications of Categories}, (34):1073--1133, 2019.
\newblock E-print {\tt 1212.5853}.

\bibitem[GG09]{gg1}
Richard Garner and Nick Gurksi.
\newblock The low-dimensional structures formed by tricategories.
\newblock {\em Mathematical Proceedings of the Cambridge Philosophical
  Society}, 146:551--589, 2009.

\bibitem[Gur12]{gur3}
Nick Gurski.
\newblock Biequivalences in tricategories.
\newblock {\em Theory and Applications of Categories}, 26(14):349--384, 2012.
\newblock E-print 1102.0979.

\bibitem[JK07]{jk1}
Andr{\'e} Joyal and Joachim Kock.
\newblock Weak units and homotopy 3-types.
\newblock In Batanin, Davydov, Johnson, Lack, and Neeman, editors, {\em
  Categories in Algebra, Geometry and Mathematical Physics, proceedings of
  Streetfest}, volume 431 of {\em Contemporary Math.}, pages 257--276. AMS,
  2007.

\bibitem[Koc06]{koc3}
Joachim Kock.
\newblock Note on commutativity in double semigroups and two-fold monoidal
  categories.
\newblock {\em Journal of Homotopy and Related Structures}, 2, 2006,
  math/0608452.
\newblock E-print math/0608452.

\bibitem[KS74]{ks1}
G.~M. Kelly and Ross Street.
\newblock Review of the elements of 2-categories.
\newblock In {\em Category Seminar}, volume 420 of {\em Springer Lecture Notes
  in Mathematics}, pages 75--103, 1974.

\bibitem[Lac10]{lac2}
Stephen Lack.
\newblock Icons.
\newblock {\em Applied Categorical Structures}, 18(3):289--307, 2010.
\newblock E-print {\tt 0711.4657}.

\bibitem[Lei04]{lei8}
Tom Leinster.
\newblock {\em Higher operads, higher categories}.
\newblock Number 298 in London Mathematical Society Lecture Note Series.
  Cambridge University Press, 2004.

\bibitem[LP08]{lp1}
Steve Lack and Simona Paoli.
\newblock 2-nerves for bicategories.
\newblock {\em $K$-Theory}, 38(2):153--175, 2008.
\newblock E-print {\tt math.CT/0607271}.

\bibitem[Shu12]{shul2}
Michael Shulman.
\newblock Not every pseudoalgebra is equivalent to a strict one.
\newblock {\em Adv. Math.}, 229(3):2024--2041, 2012, arXiv:1005.1520.
\newblock E-print 1005.1520.

\bibitem[Tri99]{tri1}
Todd Trimble.
\newblock What are `fundamental $n$-groupoids'?, 1999.
\newblock Seminar at DPMMS, Cambridge, 24 August 1999.

\end{thebibliography}

\end{document}